\theoremstyle{plain}
\newtheorem{thm}{Theorem}[section]
\newtheorem{pro}[thm]{Proposition}
\newtheorem{lem}[thm]{Lemma}
\newtheorem{rem}[thm]{Remark}
\newtheorem*{ex*}{Example}
\numberwithin{equation}{section}
\newcommand{\B}{\mathbb{B}}
\newcommand{\G}{\mathbb{G}}
\newcommand{\N}{\mathbb{N}}
\newcommand{\R}{\mathbb{R}_{+}^{n}}
\newcommand{\RR}{\mathbb{R}^{n}}
\def\eps{\varepsilon}
\def\t{\theta}
\def\1{d \Omega_{\lambda+\mathbf{1}+\eps}(s)}
\def\q{\mathfrak{q}}
\DeclareMathOperator{\domain}{Dom}
\DeclareMathOperator{\supp}{supp}
\begin{document}
%\date{\today}

\title[Fundamental operators in Dunkl and Bessel settings]{On fundamental harmonic analysis operators in certain Dunkl and Bessel settings}

\author[A.J. Castro]{Alejandro J. Castro}
\address{Alejandro J. Castro,     \newline
Departamento de An\'alisis Matem\'atico, Universidad de la Laguna       \newline
Campus de Anchieta, Avda. Astrof\'isico Francisco S\'anchez, s/n,       \newline
38271 La Laguna (Sta. Cruz de Tenerife), Spain
      }
\email{ajcastro@ull.es}

\author[T.Z. Szarek]{Tomasz Z. Szarek}
\address{Tomasz Z. Szarek,     \newline
            Instytut Matematyczny,
      Polska Akademia Nauk, \newline
      \'Sniadeckich 8,
      00--956 Warszawa, Poland
      }
\email{szarektomaszz@gmail.com}

\subjclass[2010]{42C05 (primary), 42B20, 42B25 (secondary)}
\keywords{Dunkl Laplacian, Bessel Laplacian, maximal operator, square function, multiplier, Riesz transform, Lusin area integral, Calder\'on-Zygmund operator}
\thanks{The first author was partially supported by MTM2010/17974 and also by an FPU grant from the Government of Spain. 
The second author was partially supported by NCN research project 2012/05/N/ST1/02746}

%%%%%%%%%%%%%%%%%%%%%%%%%%%%%%%%%%%%%%%%%%%%%%%%%%%%%%%%%%%%%%%%%%%%%%%%%%%%%%%%%%%%%
\begin{abstract}
    We consider several harmonic analysis operators in the multi-dimensional context of the
    Dunkl Laplacian with the underlying group of reflections isomorphic to $\mathbb{Z}_2^n$ (also negative values of the multiplicity function are admitted).
Our investigations include maximal operators, $g$-functions,
    Lusin area integrals, Riesz transforms and multipliers of Laplace and Laplace-Stieltjes transform type.
    Using the general Calderón-Zygmund theory we prove that these objects are bounded in weighted $L^p$ spaces, $1<p<\infty$,
    and from $L^1$ into weak $L^{1}$.
\end{abstract}

%%%%%%%%%%%%%%%%%%%%%%%%%%%%%%%%%%%%%%%%%%%%%%%%%%%%%%%%%%%%%%%%%%%%%%%%%%%%%%%%%%%%
\maketitle
%%%%%%%%%%%%%%%%%%%%%%%%%%%%%%%%%%%%%%%%%%%%%%%%%%%%%%%%%%%%%%%%%%%%%%%%%%%%%%%%%%%%

%%%%%%%%%%%%%%%%%%%%%%%%%%%%%%%%%%%%%%%%%%%%%%%%%%%%%%%%%%%%%%%%%%%%%%%%%%%%%%%%%%%%
\section{Introduction}
%%%%%%%%%%%%%%%%%%%%%%%%%%%%%%%%%%%%%%%%%%%%%%%%%%%%%%%%%%%%%%%%%%%%%%%%%%%%%%%%%%%%
In \cite{CaSz} the authors considered various harmonic analysis operators in the Bessel context associated with the (modified) Hankel transform. The present paper is a continuation and extension of that research. We investigate several harmonic analysis operators 
such as heat and Poisson semigroups maximal operators, Littlewood-Paley-Stein mixed $g$-functions, mixed Lusin area integrals, higher order Riesz transforms, multipliers of Laplace and Laplace-Stieltjes transform type (noteworthy these multipliers cover, as special cases, imaginary powers of the Dunkl Laplacian)
in a more general Dunkl setting with the underlying group of reflections isomorphic to $\mathbb{Z}_2^n$. In fact, after restricting to reflection invariant functions this Dunkl situation reduces to the one from \cite{CaSz}.
However, some objects of our interest are defined in a slightly different way than in \cite{CaSz}. This pertains to higher order mixed square functions and Riesz transforms; see the comment following Proposition~\ref{pro:kerestLAIP}.
Moreover, in comparison with \cite{CaSz}, we investigate also mixed Lusin area integrals.
Consequently, our present research delivers also new results in the Bessel setting.

For basic facts concerning the Dunkl framework we refer the reader to the survey article by Rösler \cite{Ros}. Here, we invoke only the most relevant definitions,
which will be needed for our purposes and, in particular, are
connected with the special case when the group of reflections is isomorphic to $\mathbb{Z}_2^n$.

We will work in the space $\mathbb{R}^n$, $n\ge 1$, equipped with the doubling measure
$$
dw_\lambda(x) = \prod_{j=1}^n |x_j|^{2\lambda_j}\, dx,
\qquad x=(x_1,\ldots,x_n).
$$
The multi-index $\lambda = (\lambda_1,\ldots, \lambda_n)$ represents the multiplicity function and will always be assumed to belong to $(-1/2,\infty)^n$.
Notice that negative values of the multiplicity function are admitted.
We consider the group of reflections $G$ generated by $\sigma_j$, $j=1,\ldots,n$,
$$
\sigma_{j}(x_{1},\ldots,x_{j},\ldots,x_{n})=(x_{1},\ldots,-x_{j},\ldots,x_{n}).
$$
Clearly, the reflection $\sigma_j$ is in the hyperplane orthogonal to the $j$th coordinate vector.
The associated differential-difference operators
$$
T_{j}^{\lambda}f(x)
    = \partial_{x_{j}}f(x) + \lambda_{j} \frac{f(x)-f(\sigma_{j}x)}{x_{j}}, \qquad f\in C^{1}(\mathbb{R}^{n}), \quad j=1,\ldots,n,
$$
form a commuting system. The Dunkl Laplacian is defined in a natural way as
$$
\Delta_{\lambda}f
    =-\sum_{j=1}^{n}\big(T_{j}^{\lambda}\big)^{2}f, \qquad f \in C^2(\RR).
$$
This operator will play in our context a similar role to that of the Euclidean Laplacian in the classical harmonic analysis.
Obviously, the trivial choice of the multiplicity function ($\lambda \equiv 0$) reduces our situation to the analysis related to the classical Laplacian.

The study of harmonic analysis operators in the Dunkl setting has
been carried out by many authors in recent years. In particular,
in a general Dunkl context the unweighted $L^p$ mapping properties
for the heat and Poisson semigroups maximal operators were
implicitly established by Thangavelu and Xu in \cite{TX1}.
Recently the latter objects were studied also by Li and Liao \cite{LL} in the one-dimensional situation.
Riesz transforms in the Dunkl setting have also drawn considerable attention. 
$L^p$ mapping properties of the first
order Riesz transforms were investigated in the one-dimensional
situation by Thangavelu and Xu \cite{TX2} and then by Amri, Gasmi
and Sifi in \cite{AGS}. Later on Amri and Sifi \cite{AS} developed
a variant of Calderón-Zygmund theory, which turned out to be well
suited to the general Dunkl framework and, in particular, allowed
them to obtain unweighted $L^p$ bounds for the first order Riesz
transforms. All the above mentioned papers, however, contain a
constraint on the multiplicity function, namely nonnegativity is
required. Here, investigating the case when $G \simeq
\mathbb{Z}_2^n$, we are not so restrictive and allow the multiplicity
function to be negative. Our considerations fit into a recent line
of research connected with analysis for ``low'' values of type
parameters, which was developed in the Bessel
\cite{CaSz},  Jacobi \cite{NSS} and Laguerre \cite{NoSz} settings.
Furthermore, in comparison with \cite{AGS,AS,TX1,TX2} we obtain
weighted $L^p$ estimates with a large class of weights admitted.

This research is also motivated by results obtained recently in a discrete context of the Dunkl harmonic oscillator associated with the Dunkl Laplacian $\Delta_{\lambda}$.
This concerns especially results contained in the papers \cite{NoSt6,NoSt4,Sz1,Sz2}, where weighted $L^p$ mapping properties were studied for several operators such as Riesz transforms, imaginary powers of the Dunkl harmonic oscillator, $g$-functions and Lusin area integrals, and multipliers of Laplace and Laplace-Stieltjes transform type, respectively.
It is worth pointing out that more recently in \cite{A} some unweighted $L^p$ results for Riesz transforms in a general Dunkl harmonic oscillator context were proved.

The main objective of our paper is to analyze $L^p$ mapping properties of various harmonic analysis operators related to the Dunkl Laplacian setting. The main result of the paper, Theorem~\ref{thm:main} below, says that the heat semigroup maximal operator, Littlewood-Paley-Stein mixed $g$-functions, mixed Lusin area integrals, higher order Riesz transforms and multipliers of Laplace and Laplace-Stieltjes transform type are bounded on weighted $L^p$, $1<p<\infty$, spaces and are of weighted weak type $(1,1)$ for a large class of weights.
To prove this we exploit the method from \cite{NoSt4}, see also \cite{Sz1}, which allows us to reduce the analysis to the smaller space $(\R,dw_{\lambda}^+)$
(here and later on $dw_{\lambda}^+$ is the restriction of $dw_\lambda$ to 
$\R \equiv (0,\infty)^n$)
and appropriately defined Bessel-type operators emerging in a natural way from the original ones.
Next, see Theorem~\ref{thm:CZ} below, we show that these auxiliary operators can be interpreted as (vector-valued) Calderón-Zygmund operators associated with the space of homogeneous type $(\R,dw_\lambda^+,|\cdot|)$.
The main technical difficulty connected with this approach is to prove the relevant standard estimates for the kernels involved. The technique we use has its roots in previous papers, see \cite{CaSz,NoSz} and references given there.
As a consequence, by means of standard arguments, we obtain also similar results for analogous operators based on the Poisson semigroup.
This, however, is not so straightforward in the case of Lusin area integrals, where more delicate analysis is needed; see the proof of Proposition~\ref{pro:kerestLAIP}.
The proof of this result gives also an intuition how to deduce a similar result in the Dunkl harmonic oscillator context, see the comment following the statement of \cite[Theorem~2.7]{Sz1}.

The paper is organized as follows. Section~\ref{sec:mainres} contains the setup, definitions of all investigated objects in the Dunkl setting and statements of the main results. We define Bessel-type operators related to the space $(\R,dw_{\lambda}^+,|\cdot|)$ and reduce proving the main theorem to showing that these auxiliary operators are (vector-valued)
Calder\'on-Zygmund operators related to this smaller space.
This section ends with various comments pertaining to the main results.
In Section~\ref{sec:as} we prove that the Bessel-type objects are $L^2(dw_\lambda^+)$-bounded.
Finally, in Section~\ref{sec:kerest} we obtain standard estimates
(see \eqref{gr}-\eqref{sm2} below) for all kernels associated with the Bessel-type operators. This is the most technical part of the paper.

\textbf{Notation.}
Throughout the paper we use a fairly standard notation with essentially all symbols referring to the spaces
of homogeneous type $(\RR,dw_{\lambda},|\cdot|)$ and $(\R,dw^+_{\lambda},|\cdot|)$. Here and later on
$dw^+_{\lambda}$ stands for the restriction of $dw_{\lambda}$ to $\R$.
For the sake of clarity, we now explain all
symbols and relations that might lead to a confusion.
We denote by $C_c^\infty(\R)$ the space of smooth and compactly supported functions in $\R$. By $\langle f,g \rangle_{dw_\lambda^+}$ we mean $\int_{\R} f(x) \overline{g(x)} \, dw_{\lambda}^+(x)$ whenever the integral makes sense. By $L^p(\R,Udw_\lambda^+)$ we understand the weighted $L^p(dw_\lambda^+)$ space, $U$ being a nonnegative weight on $\R$; we write simply $L^p(dw_\lambda^+)$ when $U \equiv 1$. 
Further, for $1 \le p < \infty$ we write $A_p^{\lambda,+}$ for the Muckenhoupt class of $A_p$ weights connected with the space $(\R,dw^+_{\lambda},|\cdot|)$.
Given $x,y \in \R$, $\beta \in \RR$ and
$M \in \mathbb{N}^n$, $\mathbb{N}=\{0,1,2,\ldots\}$, we denote
\begin{align*}
\mathbf{1} & = (1,\ldots,1) \in \mathbb{N}^n,\\
|\lambda| & = \lambda_1 + \ldots + \lambda_n,\\
|x| & = (x_1^2 + \ldots + x_n^2)^{1/2}, \qquad 
\textrm{(Euclidean distance)}\\
B(x,r) & = \{y \in \R : |y-x|<r\}, \qquad r>0, \qquad
    \textrm{(balls in $\R$)}\\
xy & = (x_1 y_1,\ldots, x_n y_n),\\
x \vee y & = (\max\{x_1, y_1\},\ldots, \max\{x_n, y_n\}),\\
x \wedge y & = (\min\{x_1, y_1\},\ldots, \min\{x_n, y_n\}),\\
x^\beta & = x_1^{\beta_1}\cdot \ldots \cdot x_n^{\beta_n},\\
x \le y & \equiv x_j \le y_j, \qquad j=1,\ldots,n,\\
\lfloor x \rfloor & = (\max\{ k \in \mathbb{Z} : k\le x_1\}, \ldots, \max\{ k \in \mathbb{Z} : k\le x_n\}) , \qquad
        \textrm{(floor function)} \\
\overline{M} & = (\overline{M_1}, \ldots, \overline{M_n}), \qquad
\overline{M_j}= M_j-2\lfloor M_j/2 \rfloor = \chi_{ \{M_j \text{ is odd} \}}, \\
(T^{\lambda})^M & = (T_1^{\lambda})^{M_1} \circ \ldots \circ (T_n^{\lambda})^{M_n}.
\end{align*}
In an analogous way we define $C_c^\infty(\RR)$, $\langle f,g \rangle_{dw_\lambda}$ and $L^p(\RR,Wdw_\lambda)$. Furthermore, if $x,y \in \RR$ and $\beta \in \mathbb{N}^n$ we understand the objects $xy$, $x^\beta$, $\lfloor x \rfloor$ and relation $x \le y$ in the same way as above whenever it makes sense.
We shall also
use the following terminology. Given $\eta \in \mathbb{Z}_2^n$, we say that a function $f \colon \RR \to \mathbb{C}$ is $\eta$-symmetric if for each $j=1,\ldots,n$, $f$ is either even or odd with respect to the $j$th coordinate according to whether $\eta_{j}=0$ or $\eta_{j}=1$, respectively. If $f$ is $(0,\ldots,0)$-symmetric, then we simply say that $f$ is symmetric.
Further, if there exists $\eta\in \mathbb{Z}_2^n$ such that $f$ is $\eta$-symmetric, then we denote by $f^{+}$ its restriction to $\R$.
Finally, $f_\eta$ stands for the $\eta$-symmetric component of $f$, namely
\[
f = \sum_{\eta \in \{ 0,1 \}^n} f_\eta, \qquad
f_\eta (x) = \frac{1}{2^n}\sum_{\eps \in \{-1, 1 \}^n} \eps^\eta f(\eps x).
\] 
Conversely, if $f \colon \R \to \mathbb{C}$, then by $f^\eta$ we mean the $\eta$-symmetric extension of $f$ to the space $\RR$, i.e.
\begin{align*}
f^\eta (x)
    = \left\{\begin{array}{ll}
         \eps^\eta f(\eps x), & \text{if $x\in (\mathbb{R}\setminus\{0\})^n$ and $\eps \in \{-1 , 1 \}^n$ is such that $\eps x \in \R$,} \\
         &\\
         0, & \text{if $x \notin (\mathbb{R}\setminus\{0\})^n$.}
      \end{array}\right.
\end{align*}

While writing estimates, we will use the notation $X \lesssim Y$ to
indicate that $X \le CY$ with a positive constant $C$ independent of significant quantities. We shall
write $X \simeq Y$ when simultaneously $X \lesssim Y$ and $Y \lesssim X$.

%%%%%%%%%%%%%%%%%%%%%%%%%%%%%%%%%%%%%%%%%%%%%%%%%%%%%%%%%%%%%%%%%%%%%%%%%%%%%%%%%%%%
\section{Preliminaries and main results} \label{sec:mainres}
%%%%%%%%%%%%%%%%%%%%%%%%%%%%%%%%%%%%%%%%%%%%%%%%%%%%%%%%%%%%%%%%%%%%%%%%%%%%%%%%%%%%%

Let $\lambda \in (-1/2,\infty)^n$. For $z \in \RR$ we consider the functions $\psi_z^\lambda$ and $\varphi_z^\lambda$, which are given as the tensor products
\begin{align}\label{defpsi}
\psi_z^\lambda(x) &= \prod_{j=1}^n \psi_{z_j}^{\lambda_j}(x_j), \qquad
 \varphi_z^\lambda(x) = \prod_{j=1}^n \varphi_{z_j}^{\lambda_j}(x_j), \qquad x \in \RR, \nonumber \\
\psi_{z_j}^{\lambda_j}(x_j) &=  \varphi_{z_j}^{\lambda_j}(x_j) + i x_j z_j \varphi_{z_j}^{\lambda_j+1}(x_j) , \qquad j=1,\ldots,n,
\end{align}
where the function $\varphi_{z_j}^{\lambda_j}(x_j)$ is even with respect to both $x_j$ and $z_j$ and it is given by
\begin{equation*}
\varphi_{z_j}^{\lambda_j}(x_j)
=
\frac{J_{\lambda_j-1/2} (x_jz_j) }{ (x_j z_j)^{\lambda_j-1/2} }
=
\sum_{m=0}^\infty \frac{ (-1)^m (x_j z_j/2)^{2m} }{ 2^{\lambda_j-1/2} \, m! \,\Gamma(m+\lambda_j+1/2) }.
\end{equation*}
Here $J_\nu$ denotes the Bessel function of the first kind and order $\nu$, cf. \cite{Wat}. It is known that for each $z \in \RR$,
the function $\psi_z^\lambda$ is an eigenfunction of the Dunkl Laplacian $\Delta_{\lambda}$ with the corresponding eigenvalue
$|z|^2 = z_1^2 + \ldots + z_n^2$. More precisely,
\begin{equation}\label{Dunkleigenfv}
\Delta_{\lambda} \psi_z^\lambda = |z|^2 \psi_z^\lambda, \qquad z\in \RR.
\end{equation}
The Dunkl transform connected with the Dunkl setting associated with 
$G \simeq \mathbb{Z}_2^n$ is defined for
sufficiently regular functions, say $f\in C_c^{\infty}(\RR)$, by
\[
D_{\lambda} f(z) = \frac{1}{2^n}\int_{\RR}  \overline{ \psi_z^\lambda(x) } f(x)\, dw_{\lambda}(x), \qquad z \in \RR.
\]
It is known that the Dunkl transform is an isometry in $L^2(dw_\lambda)$
and its inverse $\check{D}_\lambda$ is given by $\check{D}_\lambda f(z) = D_\lambda f(-z)$.
For $\lambda \in [0,\infty)^n$ this follows from the general Dunkl theory, see \cite[Theorem~4.26]{deJ}, and for $\lambda \in (-1/2,\infty)^n$ it can easily be deduced from the one-dimensional result \cite[Proposition 1.3]{NoSt3}.
Note that for $\lambda = 0$ the Dunkl transform is just the classical Fourier transform.

We consider the nonnegative self-adjoint extension of $\Delta_{\lambda}$, which will be still denoted by $\Delta_{\lambda}$, defined by
\[
\Delta_{\lambda} f = \check{D}_\lambda(  |z|^2 D_\lambda f(z)  )
\]
on the domain
\[
\domain(\Delta_{\lambda})
=
\{ f \in L^2(dw_\lambda) : |z|^2 D_\lambda f(z) \in L^2(dw_\lambda) \}.
\]
The heat-Dunkl semigroup $\{ \mathbb{W}_t^\lambda \}_{t>0} = \{ e^{-t\Delta_{\lambda}} \}_{t>0}$ generated by $- \Delta_{\lambda}$ is given on $L^2(dw_\lambda)$ by
\begin{equation*}
\mathbb{W}_t^\lambda f
=
\check{D}_\lambda \big(  e^{-t|z|^2} D_\lambda f(z)  \big).
\end{equation*}
It has the integral representation
\[
\mathbb{W}_t^\lambda f(x) = \int_{\RR} \G_t^\lambda (x,y) f(y) \, dw_\lambda(y), \qquad x \in \RR, \quad t>0,
\]
with the kernel
\begin{equation}\label{kerG}
\G_t^\lambda (x,y) =
\frac{1}{2^n}
\sum_{\eta \in \{ 0, 1 \}^n }
(xy)^\eta W_t^{\lambda + \eta}(x,y), \qquad x,y \in \RR, \quad t>0,
\end{equation}
where
\begin{equation*}
W_t^{\lambda}(x,y) =
\frac{1}{(2t)^n} \exp\Big( -\frac{1}{4t}\big(|x|^2+|y|^2\big)\Big)
    \prod_{j=1}^n (x_j y_j)^{-\lambda_j+1\slash 2} I_{\lambda_{j}-1/ 2}
        \Big(\frac{x_j y_j}{2t} \Big), \quad  x,y \in \RR, \ t>0.
\end{equation*}
Here $I_\nu$ denotes the modified Bessel function of the first kind and order $\nu > -1$, cf. \cite[p.\,395]{Wat} or \cite[(1.1)]{CaSz}. Moreover,
\[
\frac{I_\nu(z)}{z^\nu} =
\sum_{m=0}^\infty \frac{ (z/2)^{2m} }{ 2^\nu  m! \, \Gamma(m + \nu + 1) },
\qquad z \in \mathbb{R}, \quad \nu>-1.
\]
Observe that $W_t^{\lambda}(x,y)$ restricted to $(x,y) \in \R \times \R$ is the heat-Bessel kernel considered for instance in \cite{BCN,CaSz}.

Now we are ready to introduce the main objects of our study, which are defined initially in $L^2(dw_\lambda)$ in the cases (1)-(4) and (6), or in $C^{\lambda}$
(the space of smooth $L^2(dw_\lambda)$-functions whose
Dunkl transform is also smooth and compactly supported in $\RR\setminus \{ 0 \}$) in the case of Riesz transforms (5).

\begin{itemize}
\item[$(1)$] The heat-Dunkl semigroup maximal operator
$$
\mathbb{W}_{*}^{\lambda}f = \big\| \mathbb{W}_t^{\lambda}f\big\|_{L^{\infty}(dt)}.
$$
\item[$(2)$] Littlewood-Paley-Stein type mixed $g$-functions
$$
g^{\lambda}_{K,M}(f) = \big\| \partial_t^K (T^\lambda)^M \mathbb{W}_t^{\lambda}f \big\|_{L^2(t^{2K+|M|-1}dt)},
$$
where $M \in \N^n$, $K \in \N$, $|M|+K>0$.
\item[$(3)$] Multipliers of Laplace transform type
$$
\mathcal{M}^{\lambda}_{\mathfrak{m}} f = \check{D}_{\lambda}(\mathfrak{m}D_{\lambda}f),
$$
where $\mathfrak{m}(z) = |z|^2 \int_0^{\infty} e^{-t|z|^2} \Phi(t)\, dt$ with $\Phi \in L^{\infty}(dt)$.
\item[$(4)$] Multipliers of Laplace-Stieltjes transform type
$$
\mathcal{M}^{\lambda}_{\mathfrak{m}} f = \check{D}_{\lambda}(\mathfrak{m}D_{\lambda}f),
$$
where $\mathfrak{m}(z) = \int_{(0,\infty)} e^{-t|z|^2} \, d\nu (t)$, with
$\nu$ being a complex Borel measure on $(0,\infty)$.
\item[$(5)$] Riesz transforms of order $M$
$$
R_M^{\lambda}f = (T^\lambda)^M \check{D}_{\lambda} (|z|^{-|M|} D_{\lambda} f(z) ),
$$
where $M \in \N^n$ and $|M|>0$.
Note that similar arguments to those used in the proof of Proposition~\ref{pro:L2b} (the case of $R_M^{\lambda,\eta,+}$), see Section~\ref{sec:as} below, show that $C^{\lambda}$ is dense in $L^2(dw_\lambda)$.
\item[$(6)$] Mixed Lusin area type integrals
$$
S^{\lambda}_{K,M}(f)(x)=
\bigg( \int_{A(x)} t^{2K+|M|-1} \big|\partial_{t}^K (T^\lambda)^M \mathbb{W}_{t}^{\lambda}f(z)\big|^{2}\frac{dw_{\lambda}(z) \, dt}
{V_{\sqrt{t}}^{\lambda}(x)} \bigg)^{1\slash 2},
$$
where $M \in \N^n$, $K \in \N$, $|M|+K>0$, $A(x)$ is the parabolic cone with vertex at $x$,
\begin{equation}\label{def:A}
A(x)=(x,0)+A,\qquad
A=\Big\{(z,t)\in \RR \times(0,\infty) : |z|<\sqrt{t}\Big\},
\end{equation}
and $V_{t}^{\lambda}(x)$ is the $w_{\lambda}$ measure of the cube centered at $x$ and of side lengths $2t$. More precisely,
\begin{align*}
V_{t}^{\lambda}(x)=\prod_{j=1}^{n}V_{t}^{\lambda_{j}}(x_j),\qquad
V_{t}^{\lambda_j}(x_j)=w_{\lambda_{j}}\big((x_{j}-t,x_{j}+t)\big),\qquad x\in\RR,\quad t>0.
\end{align*}
\end{itemize}

Our main result reads as follows.

\begin{thm}\label{thm:main}
Assume that $\lambda \in(-1\slash 2,\infty)^{n}$ and $W$ is a weight on $\RR$ invariant under the reflections $\sigma_1, \ldots ,\sigma_n$.
Let $W^+$ be the restriction of $W$ to $\R$. Then the multipliers of Laplace and Laplace-Stieltjes transform type and the Riesz transforms extend to
bounded linear operators on $L^{p}(\RR,Wdw_{\lambda})$, $W^{+}\in A_{p}^{\lambda,+}$, $1<p<\infty$, and from $L^{1}(\RR,Wdw_{\lambda})$ to weak $L^{1}(\RR,Wdw_{\lambda})$, $W^{+}\in A_{1}^{\lambda,+}$.
Furthermore, the heat-Dunkl semigroup maximal operator, the mixed $g$-functions and the mixed Lusin area integrals are bounded on $L^{p}(\RR,Wdw_{\lambda})$, $W^{+}\in A_{p}^{\lambda,+}$, $1<p<\infty$, and from $L^{1}(\RR,Wdw_{\lambda})$ to weak $L^{1}(\RR,Wdw_{\lambda})$, $W^{+}\in A_{1}^{\lambda,+}$.
\end{thm}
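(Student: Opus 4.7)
The plan is to exploit the reflection symmetry built into the problem to transfer everything to the quadrant $\R$, where the analysis reduces to a Bessel-type setting. Any $f \in L^p(\RR, W dw_\lambda)$ decomposes as $f = \sum_{\eta \in \{0,1\}^n} f_\eta$ into $\eta$-symmetric components. Since $W$ is invariant under each reflection $\sigma_j$, the weighted $L^p$ norm on $\RR$ controls the $L^p(\R, W^+ dw_\lambda^+)$ norms of each $f_\eta^+$ and conversely; moreover $W^+ \in A_p^{\lambda,+}$ precisely captures the right Muckenhoupt condition for the quadrant. Hence it suffices to prove the stated weighted bounds for the restriction to $\R$ of each of the six operators acting on $\eta$-symmetric inputs, and then reassemble.

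Next, for each operator $\mathcal{O}^\lambda \in \{\mathbb{W}_*^\lambda, g^\lambda_{K,M}, \mathcal{M}^\lambda_{\mathfrak{m}}, R^\lambda_M, S^\lambda_{K,M}\}$ and each symmetry type $\eta$, I would plug the decomposition \eqref{kerG} of $\G^\lambda_t$ into the integral representation. The kernel splits into a sum of $2^n$ pieces indexed by $\vartheta \in \{0,1\}^n$, each of the form $(xy)^\vartheta W_t^{\lambda+\vartheta}(x,y)$ with $W_t^{\lambda+\vartheta}$ restricted to the quadrant being the Bessel heat kernel. Combining this with the parity pattern of $f_\eta$, the integral over $\RR$ collapses to an integral over $\R$, and differentiation in the Dunkl derivatives $T^\lambda_j$ on $\eta$-symmetric functions reduces to a Bessel-type differentiation (with parameter shifted by $\overline{M_j}$ and additional multiplicative factors $x_j$ or $x_j^{-1}$ dictated by parity). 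This produces a finite family of Bessel-type operators $\mathcal{O}^{\lambda,\eta,+}$ on $(\R, dw_\lambda^+)$, scalar-valued in cases (1), (3), (4), (5) and vector-valued (taking values in $L^2$ of the time or cone variable) in cases (2) and (6).

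With this reduction in hand, I would invoke the Calderón–Zygmund machinery on the space of homogeneous type $(\R, dw_\lambda^+, |\cdot|)$, as formalised by Theorem~\ref{thm:CZ}. Concretely, this requires: (a) $L^2(dw_\lambda^+)$-boundedness of each $\mathcal{O}^{\lambda,\eta,+}$, which follows by spectral/Plancherel arguments for the modified Hankel transform on the quadrant and is the content of Section~\ref{sec:as}; (b) the standard kernel estimates \eqref{gr}–\eqref{sm2}, namely a size bound $|K(x,y)| \lesssim 1/w_\lambda^+(B(x,|y-x|))$ and Hölder-type smoothness in $x$ and $y$, where for vector-valued operators the pointwise modulus is replaced by the relevant Hilbert-space norm. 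Once (a) and (b) are established, the CZ theorem yields weighted $L^p$ and weak-type $(1,1)$ bounds with the Muckenhoupt class $A_p^{\lambda,+}$, completing the reduction.

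The main obstacle, as the authors themselves signal, is step (b): proving the standard estimates for the zoo of kernels produced by (1)–(6), especially for the Lusin area integrals $S^\lambda_{K,M}$ in case (6), where the cone $A(x)$ couples the spatial and temporal variables in a way that does not reduce cleanly to the $g$-function case. I expect to handle these uniformly by writing each kernel via the integral representation of $I_\nu$ (or its series expansion), performing careful changes of variable to expose cancellation, and then bounding the resulting oscillatory/Gaussian integrals by techniques developed in \cite{CaSz,NoSz}; for case (6), an additional argument controlling the $t$-integral over the cone (analogous to the Poisson-Lusin argument sketched before Proposition~\ref{pro:kerestLAIP}) will be required. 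The Poisson-based analogues then follow by subordination and standard arguments from the heat-based results.
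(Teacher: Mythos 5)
Your proposal follows essentially the same route as the paper: decompose $f$ into its $\eta$-symmetric components, use the reflection invariance of $W$ to transfer the problem to the quadrant $(\R,dw_\lambda^+,|\cdot|)$ where the operators become Bessel-type operators with kernels built from $(xy)^\eta W_t^{\lambda+\eta}(x,y)$, and then apply vector-valued Calder\'on--Zygmund theory on that space of homogeneous type, with the $L^2(dw_\lambda^+)$-boundedness and the standard kernel estimates (the latter being the technical heart, especially for the Lusin area integrals) supplied separately. This is precisely the reduction Theorem~\ref{thm:main} $\Rightarrow$ Theorem~\ref{thm:main+} $\Rightarrow$ Theorem~\ref{thm:CZ} carried out in the paper, so the outline is correct and matches the authors' argument.
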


Notice that for symmetric functions the condition $W^{+}\in A_{p}^{\lambda,+}$ is equivalent to saying that $W$ is in the Muckenhoupt class of $A_p$ weights associated with the initial space $(\RR,dw_\lambda,|\cdot|)$.

The proof of Theorem \ref{thm:main} can be reduced to showing analogous properties for certain, suitably defined, auxiliary operators emerging
from those introduced above and related to the smaller space $(\R,dw_\lambda^+,|\cdot|)$.
To proceed, for each $\eta \in \{ 0,1 \}^n$ we consider an auxiliary semigroup acting initially on $L^2(dw_\lambda^+)$ and given by the formula
\begin{equation*}
\mathbb{W}_t^{\lambda,\eta,+} f
=
\Big( \check{D}_\lambda \big( e^{-t|z|^2} D_\lambda f^\eta (z) \big) \Big)^+.
\end{equation*}
These semigroups have the integral representations, see \eqref{kerG},
\begin{align*}
\mathbb{W}_t^{\lambda,\eta,+} f (x) & =
\int_{\R} \G_t^{\lambda,\eta,+} (x,y) f(y) dw_\lambda^+(y),
\qquad x\in \R, \quad t>0,\\
\G_t^{\lambda,\eta,+} (x,y) & =
(xy)^\eta W_t^{\lambda + \eta} (x,y), \qquad x,y \in \R, \quad t>0.
\end{align*}
Further, these integral formulas provide us a good definition of $\mathbb{W}_t^{\lambda,\eta,+}$ on weighted $L^p$ spaces for a large class of weights and produce always smooth functions of $(x,t) \in \R \times \mathbb{R}_+$, see Lemma~\ref{lem:3.5BCN} below for more details.

For $\eta \in \{ 0,1 \}^n$ and $M \in \mathbb{N}^n$ we denote
$\delta_{\eta,M}  = \delta_{1, \eta_1,M_1} \circ \ldots \circ \delta_{n, \eta_n,M_n}$, being for every $j=1, \ldots, n,$
\begin{align*}
\delta_{j, 0,M_j}
    = \left\{\begin{array}{ll}
         (\delta_j^* \delta_j)^{M_j/2}, & \text{if } M_j \text{ is even,} \\
         &\\
         \delta_j (\delta_j^* \delta_j)^{( M_j - 1)/2}, & \text{if } M_j \text{ is odd,}
      \end{array}\right. \quad
\delta_{j, 1,M_j}
    = \left\{\begin{array}{ll}
         (\delta_j \delta_j^*)^{M_j/2}, & \text{if } M_j \text{ is even,} \\
         &\\
         \delta_j^* (\delta_j \delta_j^*)^{( M_j - 1) /2}, & \text{if } M_j \text{ is odd,}
      \end{array}\right.
\end{align*}
where $\delta_j  = \partial_{x_j}$ and $\delta_j^*  = \partial_{x_j} + \frac{2\lambda_j}{x_j}$ is, up to a sign, the formal adjoint of $\delta_j$ in the space $L^2(dw_\lambda^+)$.
These derivatives correspond to the action of $(T^\lambda)^M$ on $\eta$-symmetric functions. To be more precise, if $f$ is $\eta$-symmetric, then $(T^\lambda)^M f = \delta_{\eta,M} f$.
Moreover, we may also think that each $\delta_{\eta,M}$ acts on functions defined on the restricted space $\R$.

Now we are ready to introduce the auxiliary Bessel-type operators, which are defined initially in $L^2(dw_\lambda^+)$ in the cases (1)-(4) and (6), or in
$$
C^{\lambda,\eta,+} = \{ f \in L^2(dw_\lambda^+) :
f^{\eta} \in C^\infty(\RR),
\big( D_{\lambda} f^{\eta} \big)^+ \in C_c^\infty(\R) \}
$$
in the case of the Bessel-type Riesz transforms (5).
\begin{itemize}
\item[$(1)$] The maximal operator
$$
\mathbb{W}_{*}^{\lambda,\eta,+}f = \big\| \mathbb{W}_t^{\lambda,\eta,+}f\big\|_{L^{\infty}(dt)}.
$$
\item[$(2)$] Littlewood-Paley-Stein type mixed $g$-functions
$$
g^{\lambda,\eta,+}_{K,M}(f) = \big\| \partial_t^K \delta_{\eta,M} \mathbb{W}_t^{\lambda,\eta,+}f \big\|_{L^2(t^{2K+|M|-1}dt)},
$$
where $M \in \N^n$, $K \in \N$, $|M|+K>0$.
\item[$(3)$] Multipliers of Laplace transform type
$$
\mathcal{M}^{\lambda,\eta,+}_{\mathfrak{m}} f =
\big( \check{D}_{\lambda}(\mathfrak{m}D_{\lambda}f^{\eta}) \big)^+,
$$
where $\mathfrak{m}(z) = |z|^2 \int_0^{\infty} e^{-t|z|^2} \Phi(t)\, dt$ with $\Phi \in L^{\infty}(dt)$.
\item[$(4)$] Multipliers of Laplace-Stieltjes transform type
$$
\mathcal{M}^{\lambda,\eta,+}_{\mathfrak{m}} f =
\big( \check{D}_{\lambda}(\mathfrak{m}D_{\lambda}f^{\eta}) \big)^+,
$$
where $\mathfrak{m}(z) = \int_{(0,\infty)} e^{-t|z|^2} \, d\nu (t)$, with
$\nu$ being a complex Borel measure on $(0,\infty)$.
\item[$(5)$] Riesz transforms of order $M$
$$
R_M^{\lambda,\eta,+}f = \delta_{\eta,M} \big( \check{D}_{\lambda} (|z|^{-|M|} D_{\lambda} f^{\eta} (z) ) \big)^+,
$$
where $M \in \N^n$ and $|M|>0$.
\item[$(6)$] Mixed Lusin area type integrals
$$
S^{\lambda,\eta,+}_{K,M}(f)(x)=
\bigg( \int_{A(x)} t^{2K+|M|-1} \big|\partial_{t}^K \delta_{\eta,M} \mathbb{W}_{t}^{\lambda,\eta,+}f(z)\big|^{2}
\, \chi_{ \{z \in \R \} }
\frac{dw^+_{\lambda}(z) \, dt}
{V_{\sqrt{t}}^{\lambda,+}(x)} \bigg)^{1\slash 2},
$$
where $M \in \N^n$, $K \in \N$, $|M|+K>0$, and $A(x)$ is the parabolic cone with vertex at $x$, see \eqref{def:A}.
Here $V_{t}^{\lambda,+}(x)$ is the $w^+_{\lambda}$ measure of the cube centered at $x$ and of side lengths $2t$, restricted to $\R$. More precisely,
\begin{align*}
V_{t}^{\lambda,+}(x)=\prod_{j=1}^{n}V_{t}^{\lambda_{j},+}(x_j),\qquad x\in\R,\quad t>0,
\end{align*}
and for $j=1,\ldots,n$,
\begin{align} \nonumber
V_{t}^{\lambda_j,+}(x_j)
    &= w_{\lambda_{j}}^{+}\big((x_{j}-t,x_{j}+t)\cap\mathbb{R}_{+}\big) \\ \label{Vexplicit}
    &= \left( (x_{j}+t)^{2\lambda_j+1} - \chi_{ \{ x_j > t \}} (x_{j}-t)^{2\lambda_j+1} \right)/(2\lambda_j+1).
\end{align}
\end{itemize}
Notice that the Bessel-type Lusin area integrals can be written as
\begin{equation*}
S^{\lambda,\eta,+}_{K,M}(f)(x)=
\big\|\partial_{t}^K \delta_{\eta,M} \mathbb{W}_{t}^{\lambda,\eta,+}f(x+z)\sqrt{\Xi_{\lambda}(x,z,t)}
\,\chi_{\{x+z\in\R\}}\big\|_{L^2(A,t^{ 2K + |M| - 1}dzdt)},
\end{equation*}
where the function $\Xi_{\lambda}$ is given by
\begin{equation*}
\Xi_{\lambda}(x,z,t)=\prod_{j=1}^{n}
\frac{(x_{j}+z_{j})^{2\lambda_{j}}}{V_{\sqrt{t}}^{\lambda_{j},+}(x_{j})},\qquad x\in\R,\quad z\in\RR,\quad x+z\in\R.
\end{equation*}

Similar arguments to those given in \cite[p.\,6]{NoSt4} and \cite[pp.\,1522--1524]{Sz1} allow us to reduce the proof of Theorem~\ref{thm:main} by showing the following.

\begin{thm}\label{thm:main+}
Assume that $\lambda \in(-1\slash 2,\infty)^{n}$ and $\eta \in \{ 0, 1\}^n$. Then the Bessel-type operators \emph{(3)}-\emph{(5)} extend to
bounded linear operators on $L^{p}(\R,Udw^+_{\lambda})$, $U \in A_{p}^{\lambda,+}$, $1<p<\infty$, and from $L^{1}(\R,Udw^+_{\lambda})$ to weak $L^{1}(\R,Udw^+_{\lambda})$, $U \in A_{1}^{\lambda,+}$.
Furthermore, the sublinear operators \emph{(1)}, \emph{(2)} and \emph{(6)} are bounded on $L^{p}(\R,Udw^+_{\lambda})$, $U \in A_{p}^{\lambda,+}$, $1<p<\infty$, and from $L^{1}(\R,Udw^+_{\lambda})$ to weak $L^{1}(\R,Udw^+_{\lambda})$, $U \in A_{1}^{\lambda,+}$.
\end{thm}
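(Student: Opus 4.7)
The plan is to deduce Theorem~\ref{thm:main+} from the general vector-valued Calder\'on-Zygmund theory on the space of homogeneous type $(\R,dw_\lambda^+,|\cdot|)$, via the bridge result (Theorem~\ref{thm:CZ}) which asserts that each of the operators (1)--(6) fits into this framework. Concretely, I would realize every Bessel-type operator in the list as a (possibly Banach-space-valued) integral operator: operator (1) is scalar-valued with kernel $\mathbb{G}_t^{\lambda,\eta,+}(x,y)$ viewed into $L^\infty(dt)$; operator (2) is $L^2(t^{2K+|M|-1}dt)$-valued with kernel $\partial_t^K \delta_{\eta,M}\mathbb{G}_t^{\lambda,\eta,+}(x,y)$; operators (3)--(5) have scalar kernels obtained by integrating derivatives of $\mathbb{G}_t^{\lambda,\eta,+}(x,y)$ against $\Phi(t)\,dt$, $d\nu(t)$, or $t^{|M|/2-1}dt$, respectively; and operator (6) is $L^2(A,t^{2K+|M|-1}\Xi_\lambda\,dzdt)$-valued, whose vector kernel incorporates the shift $z$ inside the cone $A$. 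The task then splits into two essentially independent pieces: (a) prove that each of these operators is bounded on the relevant $L^2(dw_\lambda^+)$ space (possibly with values in a Hilbert space), and (b) verify the standard Calder\'on-Zygmund kernel estimates
\begin{equation*}
\|K(x,y)\|_{\mathbb{B}} \lesssim \frac{1}{w_\lambda^+(B(x,|x-y|))}, \qquad
\|\partial_{x_j} K(x,y)\|_{\mathbb{B}} + \|\partial_{y_j}K(x,y)\|_{\mathbb{B}} \lesssim \frac{1}{|x-y|\,w_\lambda^+(B(x,|x-y|))},
\end{equation*}
off the diagonal.

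For (a), I would use the Dunkl transform, which is an isometry on $L^2(dw_\lambda)$. Since the $\eta$-symmetric extension $f \mapsto f^\eta$ is (up to a constant) an isometric embedding of $L^2(dw_\lambda^+)$ into $L^2(dw_\lambda)$, each Bessel-type operator can be written as a Plancherel multiplier applied in the Dunkl variable. The boundedness on $L^2(dw_\lambda^+)$ of the heat maximal operator and of the $g$-functions then follows from the standard subordination/spectral argument (Stein's general Littlewood-Paley theory, applied to the spectral realization $\Delta_\lambda = \check{D}_\lambda(|z|^2\cdot)\hat{D}_\lambda$), the Laplace and Laplace-Stieltjes multipliers are controlled pointwise on the Fourier side by $\|\Phi\|_\infty$ or $|\nu|((0,\infty))$, and the Riesz transforms correspond to bounded Dunkl multipliers $|z|^{-|M|}$ composed with $\delta_{\eta,M}$, whose boundedness reduces via Plancherel to the fact that $\delta_{\eta,M}\psi_z^\lambda(x)$ is essentially $|z|^{|M|}$ times a bounded symbol. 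The Lusin area integral is handled by Fubini, which reduces its $L^2$-norm to that of the associated $g$-function (this uses the doubling property to absorb the factor $V_{\sqrt t}^{\lambda,+}(x)^{-1}$).

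The hard part, and the one I expect to occupy the bulk of the work, is (b). The kernels all arise from the Bessel heat kernel $W_t^{\lambda+\eta}(x,y)$ multiplied by $(xy)^\eta$ and then differentiated in $t$, $x$, $y$, or integrated over $t$ or a parabolic cone; because negative values of $\lambda_j$ are admitted the small-argument behaviour of $I_{\lambda_j-1/2}$ is delicate and forces one to separate diagonal and off-diagonal regimes carefully. The strategy, following the lines of \cite{CaSz,NoSz}, is to use the integral representation of $I_\nu$ and write the relevant derivatives of $W_t^{\lambda+\eta}(x,y)$ as integrals against an auxiliary measure $d\Omega_{\lambda+\mathbf{1}+\varepsilon}(s)$, then estimate termwise by exploiting the exponential factor $\exp(-\tfrac{1}{4t}\,q(x,y,s))$ with $q$ an appropriate ``distance'' form. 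For operators (3)--(5) and the scalar part of (6) the $t$-integrations or $d\nu$-integrations produce kernels of size $1/(|x-y|\,w_\lambda^+(B(x,|x-y|)))$ times powers of $|x-y|$, matching the CZ bounds after routine algebra. For the Lusin area kernel the vector-valued norm couples the cone variable $z$ with the kernel, and one cannot simply bound pointwise; this is precisely the point noted after Proposition~\ref{pro:kerestLAIP} where a more subtle argument is required, and I would treat this case separately by splitting the cone $A$ into regions $|z| \le |x-y|/2$ and $|z| > |x-y|/2$, using the fact that on the first region one has $|x+z-y| \simeq |x-y|$ and on the second region the factor $\Xi_\lambda(x,z,t)/V_{\sqrt t}^{\lambda,+}(x)$ together with the Gaussian decay of $W_t^{\lambda+\eta}$ provides the needed integrability in $(z,t)$.

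Once (a) and (b) are in place, Theorem~\ref{thm:main+} follows by invoking the Calder\'on-Zygmund theorem on spaces of homogeneous type, in its vector-valued form due to R\'ubio de Francia, Ruiz and Torrea, combined with Muckenhoupt's weighted extension; this is exactly the content of Theorem~\ref{thm:CZ}. The weak type $(1,1)$ and weighted $L^p$ bounds for $U \in A_p^{\lambda,+}$ are then automatic.
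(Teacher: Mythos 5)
Your overall architecture coincides with the paper's: realize the six Bessel-type operators as (vector-valued) Calder\'on--Zygmund operators on $(\R,dw_\lambda^+,|\cdot|)$, prove $L^2(dw_\lambda^+)$-boundedness via the Plancherel property of the Dunkl transform, derive the standard kernel estimates from the integral representation of the Bessel heat kernel against $d\Omega_{\lambda+\eta+\mathbf{1}+\eps}(s)$, and then invoke the Rubio de Francia--Ruiz--Torrea theory on spaces of homogeneous type. The $L^2$ part and the treatment of kernels (1)--(5) are essentially as in the paper.

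There is, however, one concrete gap in your part (b): you state the smoothness half of the standard estimates in gradient form for \emph{all} six kernels. For the Lusin area kernel this target is unattainable. That $\mathbb{B}$-valued kernel contains the factors $\chi_{\{x+z\in\R\}}$ and $\sqrt{\Xi_{\lambda}(x,z,t)}$, which are not differentiable in $x$ for fixed $(z,t)$ (the indicator jumps when some $x_j+z_j$ crosses $0$, and $V_{\sqrt t}^{\lambda,+}$ in \eqref{Vexplicit} has a kink); moreover $(x_j+z_j)^{2\lambda_j}$ is only H\"older of order governed by $\lambda_j+1/2$ near the boundary. The correct target, and what the paper proves, is the H\"older-type smoothness \eqref{sm1}--\eqref{sm2} with any $\gamma\in(0,1/2]$ satisfying $\gamma<\min_{k}(\lambda_k+1/2)$; note that no fixed $\gamma$ works uniformly in $\lambda$, since the admissible exponent degenerates as some $\lambda_k\downarrow -1/2$ (this is precisely why the paper sets up the CZ framework with general $\gamma$ rather than \eqref{grad}). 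The essential missing ingredients are the integrated H\"older bounds
\[
\int_{|z|<\sqrt t}\chi_{\{x+z,\,x'+z\in\R\}}\bigl|\sqrt{\Xi_\lambda(x,z,t)}-\sqrt{\Xi_\lambda(x',z,t)}\bigr|^2\,dz
\lesssim\Bigl(\frac{|x-x'|^2}{t}\Bigr)^{\gamma},
\qquad
\int_{|z|<\sqrt t}\chi_{\{x+z\in\R,\,x'+z\notin\R\}}\Xi_\lambda(x,z,t)\,dz
\lesssim\Bigl(\frac{|x-x'|^2}{t}\Bigr)^{\gamma},
\]
i.e.\ Lemmas~\ref{lem:intdifXi} and \ref{lem:intXi2}, which quantify the regularity of the weight factor and control the part of the cone where exactly one of $x+z$, $x'+z$ leaves $\R$. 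Your proposed split of $A$ according to $|z|\lessgtr|x-y|/2$ controls the size of the heat-kernel factor but does not produce any $x$-smoothness of the indicator and weight, which is where the actual difficulty (and the restriction on $\gamma$) lies. Finally, the subtlety you attribute to the comment after Proposition~\ref{pro:kerestLAIP} concerns the Poisson-based area integral and the interaction of the subordination formula with the kernel estimates, not the heat-based operator (6).
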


To prove Theorem \ref{thm:main+} we will use the general (vector-valued) Calder\'on-Zygmund theory.
In fact we are going to show that the Bessel-type operators (1)-(6) are (vector-valued) Calder\'on-Zygmund
operators in the sense of the space of homogeneous type $(\R,dw_\lambda^+,|\cdot|)$. Then, in particular,
the mapping properties claimed in Theorem \ref{thm:main+} will follow from the general theory and arguments
similar to those mentioned in \cite[Section 2]{BCN}. To proceed we shall need a slightly more general definition
of the standard kernel, or rather standard estimates, than the one used in the papers \cite{BCN,CaSz}.
More precisely, we will allow slightly weaker smoothness estimates as indicated below, see for instance \cite{Sz1}.

Let $\mathbb{B}$ be a Banach space and let $K(x,y)$ be a kernel defined on
$\R\times\R\backslash \{(x,y):x=y\}$ and taking values in $\mathbb{B}$.
We say that $K(x,y)$ is a standard kernel in the sense of the space of homogeneous type
$(\R, dw^+_{\lambda},|\cdot|)$ if it satisfies
the growth estimate
\begin{equation} \label{gr}
\|K(x,y)\|_{\mathbb{B}} \lesssim \frac{1}{w^+_{\lambda}(B(x,|x-y|))}
\end{equation}
and the smoothness estimates
\begin{align}
\| K(x,y)-K(x',y)\|_{\mathbb{B}} & \lesssim \bigg(\frac{|x-x'|}{|x-y|} \bigg)^{\gamma}\, \frac{1}{w^+_{\lambda}(B(x,|x-y|))},
\qquad |x-y|>2|x-x'|, \label{sm1}\\
\| K(x,y)-K(x,y')\|_{\mathbb{B}} & \lesssim \bigg(\frac{|y-y'|}{|x-y|} \bigg)^{\gamma}\, \frac{1}{w^+_{\lambda}(B(x,|x-y|))},
\qquad |x-y|>2|y-y'| \label{sm2},
\end{align}
for some fixed $\gamma > 0$. Notice that the bounds \eqref{sm1} and \eqref{sm2} imply analogous estimates with any $0<\gamma'<\gamma$ instead of $\gamma$.
Further, observe that in these formulas, the ball $B(x,|y-x|)$ can be replaced by $B(y,|x-y|)$, in view of
the doubling property of $w^+_{\lambda}$.
Furthermore, when $K(x,y)$ is scalar-valued (i.e.\ $\mathbb{B}=\mathbb{C}$) and $\gamma = 1$, the difference bounds \eqref{sm1}
and \eqref{sm2} are implied by the more convenient gradient estimate
\begin{equation} \label{grad}
|\nabla_{\! x,y} K(x,y)| \lesssim \frac{1}{|x-y|w^+_{\lambda}(B(x,|x-y|))}.
\end{equation}
Similar reduction holds also in the vector-valued situations we consider; see the comments and arguments presented in \cite[Section 4]{NSS}.
Here, however, we will also use \eqref{sm1} and \eqref{sm2} with $\gamma <1$ and thus it is convenient to analyze the smoothness estimates rather than \eqref{grad}.

A linear operator $T$ assigning to each $f\in L^2(dw^+_{\lambda})$ a measurable $\B$-valued function $Tf$ on $\R$ is a (vector-valued) Calder\'on-Zygmund operator in the sense of the space $(\R,dw^+_{\lambda},|\cdot|)$ if
\begin{itemize}
    \item[$(i)$] $T$ is bounded from $L^2(dw^+_{\lambda})$ to $L^2_{\B}(dw^+_{\lambda})$,
    \item[$(ii)$] there exists a standard $\B$-valued kernel $K(x,y)$ such that
\begin{align*}
Tf(x)=\int_{\R}K(x,y)f(y)\,dw^+_{\lambda}(y),\qquad \textrm{a.a.}\,\,\, x\notin \supp f,
\end{align*}
for every $f \in L_c^{\infty}(dw^+_{\lambda})$,
where $L_c^{\infty}(dw^+_{\lambda})$ is the subspace of $L^{\infty}(dw^+_{\lambda})$ of bounded measurable functions
with compact supports.
\end{itemize}
Here integration of $\mathbb{B}$-valued functions is understood in Bochner's sense, and
$L^2_{\mathbb{B}}(dw^+_{\lambda})$ is the Bochner-Lebesgue space of all $\mathbb{B}$-valued
$dw^+_{\lambda}$-square integrable functions on $\R$.

It is well known that a large part of the classical theory of Calder\'on-Zygmund operators remains valid,
with appropriate adjustments, when the underlying space is of homogeneous type and the associated kernels
are vector-valued, see for instance \cite{RRT} and \cite{RT}.

The following result together with the arguments discussed above imply Theorem~\ref{thm:main+} and thus also Theorem \ref{thm:main}.

\begin{thm}\label{thm:CZ}
Assume that $\lambda \in (-1\slash 2,\infty)^{n}$ and $\eta\in \{ 0 , 1 \}^n$. Then the Bessel-type operators
\begin{align*}
\mathbb{W}_{*}^{\lambda,\eta,+}, \quad
g^{\lambda,\eta,+}_{K,M}, \quad
\mathcal{M}^{\lambda,\eta,+}_{\mathfrak{m}}, \quad
\mathcal{M}^{\lambda,\eta,+}_{\mathfrak{m}}, \quad
R_M^{\lambda,\eta,+}, \quad
S^{\lambda,\eta,+}_{K,M},
\end{align*}
are (vector-valued) Calder\'on-Zygmund operators in the sense of the space of homogeneous type $(\R,dw_{\lambda}^+,|\cdot|)$ associated with the Banach spaces $\mathbb{B}$, where $\mathbb{B}$ is $C_0$, $L^2(t^{2K+|M|-1}dt)$, $\mathbb{C}$, $\mathbb{C}$, $\mathbb{C}$, $L^2(A,t^{2K+|M|-1}dzdt)$, respectively.
Here $C_0$ denotes the closed separable subspace of $L^{\infty}(dt)$ consisting of all continuous functions on
$\mathbb{R}_{+}$ which have finite limits as $t\to 0^{+}$ and vanish as $t\to \infty$.
\end{thm}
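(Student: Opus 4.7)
The strategy is the standard one for establishing that an operator is a (vector-valued) Calder\'on-Zygmund operator: verify $(i)$ the $L^2(dw_\lambda^+)$-boundedness into the corresponding Bochner-Lebesgue space and $(ii)$ the integral representation off the diagonal with a kernel satisfying the growth estimate \eqref{gr} and the smoothness estimates \eqref{sm1}, \eqref{sm2}. Both tasks have to be carried out uniformly in $\eta \in \{0,1\}^n$ and $\lambda \in (-1/2,\infty)^n$.

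For $(i)$, I would exploit Plancherel's formula for the Dunkl transform to conjugate the operator $f \mapsto \bigl(\check{D}_\lambda(h(z) D_\lambda f^\eta)\bigr)^+$ into multiplication by $h$ on the $\eta$-symmetric subspace of $L^2(dw_\lambda)$. For $\mathcal{M}_{\mathfrak{m}}^{\lambda,\eta,+}$ the symbol $\mathfrak{m}$ is trivially bounded by $\|\Phi\|_\infty$ or $|\nu|((0,\infty))$; for $R_M^{\lambda,\eta,+}$ one writes $(T^\lambda)^M \psi_z^\lambda = m_{\eta,M}^{\lambda}(x,z)\psi_z^\lambda$ with the restriction to the $\eta$-symmetric subspace producing a bounded multiplier of order zero in $z$; for the sublinear operators $\mathbb{W}_*^{\lambda,\eta,+}$, $g_{K,M}^{\lambda,\eta,+}$ and $S_{K,M}^{\lambda,\eta,+}$ the $\B$-valued boundedness reduces, via Fubini, to the identity $\int_0^\infty t^{2K+|M|-1} |z|^{4K+2|M|} e^{-2t|z|^2}\,dt \simeq 1$ (in the Lusin area case one additionally uses a change of variable together with $\Xi_\lambda(x,z,t)\chi_{\{x+z\in\R\}} \lesssim 1/V_{\sqrt{t}}^{\lambda,+}(x)$).

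For $(ii)$, the $\B$-valued kernel of each operator is obtained by applying the relevant differential/integral operation to $\G_t^{\lambda,\eta,+}(x,y) = (xy)^\eta W_t^{\lambda+\eta}(x,y)$: for example,
\[
K_R(x,y) = \frac{1}{\Gamma(|M|/2)} \int_0^\infty \delta_{\eta,M} \G_t^{\lambda,\eta,+}(x,y)\, t^{|M|/2-1}\,dt,
\]
\[
K_g(x,y) = \bigl\{ \partial_t^K \delta_{\eta,M} \G_t^{\lambda,\eta,+}(x,y) \bigr\}_{t>0}, \qquad K_*(x,y) = \bigl\{ \G_t^{\lambda,\eta,+}(x,y) \bigr\}_{t>0},
\]
and analogously for the two multipliers and the Lusin area kernel. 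That these kernels represent the operators off the diagonal for $f \in L_c^\infty(dw_\lambda^+)$ is a standard Fubini-type argument relying on the rapid Gaussian decay of $W_t^{\lambda+\eta}(x,y)$ and the integrability of the $t$-weights involved.

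The main technical obstacle is verifying the standard estimates \eqref{gr}--\eqref{sm2} for all six kernels. My principal tool would be the Schl\"afli-type integral representation
\[
\frac{I_\nu(u)}{u^\nu} = c_\nu \int_{-1}^{1} (1-s^2)^{\nu-1/2} e^{us}\,ds, \qquad \nu > -1/2,
\]
which, after a symmetrisation argument covering the remaining range $\lambda_j + \eta_j \le 1/2$, yields
\[
W_t^{\lambda+\eta}(x,y) = c \int_{[-1,1]^n} (2t)^{-n} \exp\Bigl(-\tfrac{1}{4t}\q(x,y,s)\Bigr)\, d\Omega_{\lambda+\eta}(s),
\]
with $\q(x,y,s) = |x|^2 + |y|^2 - 2 \sum_j x_j y_j s_j$. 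Differentiating under the integral sign in $x$, $y$ or $t$ produces factors polynomial in $\q^{-1}$, $t^{-1}$ and $s_j$; the resulting expressions are controlled by Gaussian bounds combined with the comparability $w_\lambda^+(B(x,|x-y|)) \simeq \prod_j V_{|x-y|}^{\lambda_j,+}(x_j)$ read off from \eqref{Vexplicit}. The smoothness estimates follow either from the mean value theorem or, when the gradient is too singular near $x_j=0$, by splitting $K(x,y)-K(x',y)$ directly and extracting a factor $|x-x'|^\gamma$ for some $\gamma < 1$. The most delicate case is the Lusin area kernel, where smoothness is measured by an $L^2$-norm over the parabolic cone $A(x)$; here, as foreshadowed in the comment following Proposition~\ref{pro:kerestLAIP}, one must bypass the gradient bound entirely and work directly with the $L^2$ difference, absorbing the extra singularity through the explicit form of $\Xi_\lambda$ and the doubling of $V_{\sqrt{t}}^{\lambda,+}$. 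Combining $(i)$ with these kernel estimates yields Theorem~\ref{thm:CZ}.
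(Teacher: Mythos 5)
Your overall strategy coincides with the paper's: the proof is split into $L^2(dw_\lambda^+)$-boundedness via the Plancherel theorem for the Dunkl transform (Proposition~\ref{pro:L2b}), association with the listed kernels (Proposition~\ref{pro:kerassoc}), and the standard estimates \eqref{gr}--\eqref{sm2} obtained by differentiating an integral representation of the heat kernel under the integral sign (Proposition~\ref{pro:kerest}). There is, however, one concrete gap. The representation you display,
\[
W_t^{\lambda+\eta}(x,y) = c \int_{[-1,1]^n} (2t)^{-n} \exp\Bigl(-\tfrac{1}{4t}\,\q\Bigr)\, d\Omega_{\lambda+\eta}(s),
\]
requires the density $(1-s_j^2)^{\lambda_j+\eta_j-1}$ to be integrable, i.e.\ $\lambda_j+\eta_j>0$ for every $j$; it is meaningless when $\lambda_j\le 0$ and $\eta_j=0$, which is precisely the low range the theorem is designed to cover. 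A ``symmetrisation argument'' cannot repair this: the obstruction is divergence of the measure, not a parity issue. The correct device (formula \eqref{Bhk}, taken from \cite{CaSz}) is an integration by parts in the Schl\"afli integral, which raises the exponent of $(1-s_j^2)$ at the cost of producing a finite sum over $\eps\in\{0,1\}^n$ of terms carrying extra factors $(xy)^{2\eps}$ and the shifted measures $\Omega_{\lambda+\eta+\mathbf{1}+\eps}$. Those extra powers of $x$ and $y$ must then be tracked through every differentiation and through the passage to the standard estimates (this is exactly the bookkeeping performed in Lemmas~\ref{lem:EST} and~\ref{lem:EST2}), so the modification is not cosmetic.

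Two smaller points. First, the $L^2$-boundedness of $\mathbb{W}_*^{\lambda,\eta,+}$ does not follow ``via Fubini'' from the identity $\int_0^\infty t^{2K+|M|-1}|z|^{4K+2|M|}e^{-2t|z|^2}\,dt\simeq 1$; that identity is relevant only for the $g$-function and the area integral, while the supremum in $t$ needs a separate argument (the paper factors out $x^\eta$ and reduces to the known boundedness of the Bessel maximal operator $W_*^{\lambda+\eta}$). Second, for the Lusin area kernel you correctly anticipate that the gradient bound must be abandoned, but the actual mechanism is a decomposition of the cone $A$ into four regions according to whether $x+z$ and $x'+z$ lie in $\R$, together with the $L^2(dz)$ H\"older-type bounds for $\sqrt{\Xi_\lambda}$ of Lemmas~\ref{lem:intdifXi} and~\ref{lem:intXi2}, which hold only for exponents $\gamma<\min_{k}(\lambda_k+1/2)$ --- this is why the smoothness exponent in Proposition~\ref{pro:kerest} degrades for the area integral while the other five kernels admit $\gamma=1$.
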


Proving Theorem \ref{thm:CZ} splits naturally into showing the following three results.

\begin{pro}\label{pro:L2b}
Let $\lambda\in(-1\slash 2,\infty)^{n}$ and $\eta\in \{ 0 , 1 \}^n$. Then the Bessel-type operators from Theorem \ref{thm:CZ} are bounded on $L^2(dw_{\lambda}^{+})$.
\end{pro}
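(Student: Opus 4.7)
The plan is to transfer each auxiliary operator from $L^2(dw_\lambda^+)$ to $L^2(dw_\lambda)$ via the $\eta$-symmetric extension $f\mapsto f^\eta$ and then exploit Plancherel for the Dunkl transform. The key ingredients are: the identity $\|F\|_{L^2(dw_\lambda)}^2 = 2^n\|F^+\|_{L^2(dw_\lambda^+)}^2$ for any $\eta'$-symmetric $F$ (so that extension and restriction change the $L^2$ norm only by a universal factor); the fact that $D_\lambda$ preserves $\eta'$-symmetry (a consequence of the symmetry $\psi_z^\lambda(\sigma_j x)=\psi_{\sigma_j z}^\lambda(x)$ of the Dunkl kernel); and the intertwining $D_\lambda(T_j^\lambda f)(z)=iz_j D_\lambda f(z)$, under which $(T^\lambda)^M$ becomes multiplication by $(iz)^M$ and flips the parity in each coordinate with $M_j$ odd. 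Together these imply that $(T^\lambda)^M f^\eta$ is $(\eta+\overline{M})$-symmetric and that its restriction to $\R$ agrees with $\delta_{\eta,M}f$.

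With this setup, the multipliers (3)--(4) and the Riesz transforms (5) follow directly. One has $|\mathfrak{m}|\le \|\Phi\|_{L^\infty(dt)}$ in (3) and $|\mathfrak{m}|\le |\nu|((0,\infty))$ in (4), so $F\mapsto \check{D}_\lambda(\mathfrak{m}D_\lambda F)$ is bounded on $L^2(dw_\lambda)$ by Plancherel; evaluating at $F=f^\eta$ and restricting to $\R$ yields the claim. For (5), since $|z|^{-|M|}$ is symmetric, the function $g:=\check{D}_\lambda(|z|^{-|M|}D_\lambda f^\eta)$ is $\eta$-symmetric, hence $\delta_{\eta,M}g^+ = ((T^\lambda)^M g)^+$, and $(T^\lambda)^M g$ corresponds on the transform side to multiplication by $(iz)^M/|z|^{|M|}$, bounded by $1$. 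For the $g$-function (2), write
\[
\|g_{K,M}^{\lambda,\eta,+}f\|_{L^2(dw_\lambda^+)}^2 = \int_0^\infty t^{2K+|M|-1}\|\partial_t^K\delta_{\eta,M}\mathbb{W}_t^{\lambda,\eta,+}f\|_{L^2(dw_\lambda^+)}^2\,dt,
\]
lift the inner norm to $L^2(dw_\lambda)$ by symmetry (the modulus squared of an $\eta'$-symmetric function is symmetric), apply Plancherel, and interchange integration. The resulting $t$-integral is a Gamma integral, producing the transform-side multiplier $C\,|z^M|^2/|z|^{2|M|}\le C$, which closes the bound.

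The Lusin area integral (6) reduces to the $g$-function by Fubini: swapping the order of integration puts the $x$-integration inside, and the factor $\int_{\{x\in\R:\,|x-z|<\sqrt{t}\}}dw_\lambda^+(x)/V_{\sqrt{t}}^{\lambda,+}(x)$ is uniformly bounded by the doubling of $w_\lambda^+$, leaving precisely the squared $g$-function norm. For the maximal operator (1), the explicit kernel formula gives
\[
\mathbb{W}_t^{\lambda,\eta,+}f(x)=x^\eta\,\mathbb{W}_t^{\lambda+\eta,0,+}(y^{-\eta}f)(x);
\]
since $f\mapsto y^{-\eta}f$ is an isometry from $L^2(dw_\lambda^+)$ onto $L^2(dw_{\lambda+\eta}^+)$ (because $y^{-2\eta}\,dw_{\lambda+\eta}^+ = dw_\lambda^+$), taking $\sup_t$ reduces the claim to $L^2(dw_{\lambda+\eta}^+)$-boundedness of the classical Bessel heat maximal operator $\mathbb{W}_*^{\lambda+\eta,0,+}$, a standard fact for $\lambda+\eta\in(-1/2,\infty)^n$ (via Gaussian bounds on $W_t^{\lambda+\eta}$ and pointwise domination by the Hardy-Littlewood maximal operator on the space of homogeneous type $(\R,dw_{\lambda+\eta}^+,|\cdot|)$). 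The most delicate point throughout is the careful bookkeeping of $\eta'$-symmetries that justifies replacing $\delta_{\eta,M}$ by the restriction of $(T^\lambda)^M$ and permits passing freely between $L^2(dw_\lambda^+)$ and $L^2(dw_\lambda)$; once this is sorted out, all six bounds reduce to routine Plancherel computations and the known Bessel maximal result.
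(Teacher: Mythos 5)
Your proposal is correct and follows essentially the same route as the paper: the multipliers are handled trivially by Plancherel, the $g$-functions and Riesz transforms by lifting to $L^2(dw_\lambda)$ via $\eta$-symmetric extension and computing the transform-side multiplier, the Lusin area integral by reduction to the $g$-function (your Fubini argument is exactly the content of the paper's Lemma~\ref{lem:intXi}), and the maximal operator by the identity $\mathbb{W}_t^{\lambda,\eta,+}f = x^\eta W_t^{\lambda+\eta}(y^{-\eta}f)$ together with the known Bessel result. The only point the paper treats that you pass over is the density of $C^{\lambda,\eta,+}$ in $L^2(dw_\lambda^+)$ for the Riesz transforms, which is a minor addendum rather than a gap in the boundedness estimate.
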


Formal computations and the results from papers \cite{BCN,CaSz} suggest that the Bessel-type operators are associated with the following kernels related to appropriate Banach spaces $\mathbb{B}$.

\begin{itemize}
\item[$(1)$] The kernel associated with Bessel-type maximal operator $\mathbb{W}_*^{\lambda, \eta, +}$ 
$$
\mathcal{W}^{\lambda, \eta, +}(x,y) = \big\{\G_t^{\lambda,\eta,+}(x,y)\big\}_{t>0}, \qquad \mathbb{B} = C_0 \subset L^{\infty}(dt).
$$
Using formula \eqref{Bhk} below
it can easily be justified that $\mathcal{W}^{\lambda, \eta, +}(x,y) \in C_0$ for $x \ne y$.
\item[$(2)$] The kernels associated with Bessel-type mixed $g$-functions $g^{\lambda,\eta,+}_{K,M}$ 
$$
\mathcal{G}_{K,M}^{\lambda,\eta,+} (x,y) = \big\{ \partial_t^K \delta_{\eta,M,x} \G_t^{\lambda,\eta,+}(x,y) \big\}_{t>0}, \qquad
    \mathbb{B} = L^2(t^{2K+|M|-1}dt),
$$
where $M \in \N^n$ and $K \in \N$ are such that $|M|+K>0$.
\item[$(3)$] The kernels associated with Laplace transform type multipliers $\mathcal{M}^{\lambda,\eta,+}_{\mathfrak{m}}$ 
$$
K^{\lambda,\eta,+}_{\Phi}(x,y) = - \int_0^{\infty} \Phi(t) \partial_t \G_t^{\lambda,\eta,+}(x,y)\, dt, \qquad
    \mathbb{B}=\mathbb{C},
$$
where $\Phi \in L^{\infty}(dt)$.
\item[$(4)$] The kernels associated with Laplace-Stieltjes transform type multipliers $\mathcal{M}^{\lambda,\eta,+}_{\mathfrak{m}}$ 
$$
K^{\lambda,\eta,+}_{\nu}(x,y) =  \int_{(0,\infty)} \G_t^{\lambda,\eta,+}(x,y)\, d\nu(t), \qquad
    \mathbb{B}=\mathbb{C},
$$
where $\nu$ is a complex Borel measure on $(0,\infty)$.
\item[$(5)$] The kernels associated with Bessel-type Riesz transforms $R_M^{\lambda,\eta,+}$ 
$$
R_M^{\lambda,\eta,+}(x,y) = \frac{1}{\Gamma(|M|\slash 2)} \int_0^{\infty} \delta_{\eta,M,x} \G_t^{\lambda,\eta,+}(x,y)
    t^{|M|\slash 2 -1}\, dt, \qquad \mathbb{B}=\mathbb{C},
$$
where $M \in \N^n$ is such that $|M| > 0$.
\item[$(6)$] The kernels associated with Bessel-type mixed Lusin area integrals $S^{\lambda,\eta,+}_{K,M}$ 
\[
\mathcal{S}^{\lambda,\eta,+}_{K,M}(x,y)
=
\left\{
\partial_{t}^K \delta_{\eta,M,\mathbf{x}} \G_{t}^{\lambda,\eta,+}(\mathbf{x},y)
\Big|_{\mathbf{x} = x+z} \sqrt{\Xi_{\lambda}(x,z,t)}
\,\chi_{\{x+z\in\R\}} \right\}_{(z,t) \in A}
\]
with $\mathbb{B} = L^2(A,t^{2K+|M|-1}dzdt)$,
where $M \in \N^n$ and $K \in \N$ are such that $|M|+K>0$.
\end{itemize}

The following result shows that this is true in the Calder\'on-Zygmund theory sense.

\begin{pro}\label{pro:kerassoc}
Assume that $\lambda \in (-1/2, \infty)^n$ and $\eta \in \{ 0 , 1 \}^n$. Then the Bessel-type operators \emph{(1)}-\emph{(6)} are associated, in the Calder\'on-Zygmund theory sense, with the corresponding kernels just listed.
\end{pro}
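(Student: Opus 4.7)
\smallskip

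\noindent\textbf{Proof proposal.} The plan is to treat the six operators in a uniform way: first relate each $Tf(x)$ to an integral against $f$ by writing the relevant spectral or time operations as integrals against the heat kernel $\G_{t}^{\lambda,\eta,+}(x,y)$, and then for $f \in L_c^{\infty}(dw_\lambda^+)$ with $x\notin \supp f$ justify Fubini/differentiation under the integral sign using pointwise estimates on the kernel and its derivatives. The starting point in every case is the identity
\[
\mathbb{W}_t^{\lambda,\eta,+} f(x) = \int_{\R} \G_t^{\lambda,\eta,+}(x,y)\, f(y)\, dw_\lambda^+(y),
\]
which is the definition of the semigroup. Since $f$ is bounded with compact support, the integral is effectively over a bounded region of $y$ at positive distance $\delta>0$ from $x$; on such a region one has good decay for $\G_t^{\lambda,\eta,+}$ and all its $t$- and $x$-derivatives uniformly in $y$, using the series expansion of $I_\nu$ and the explicit formula for $\G_t^{\lambda,\eta,+}$.

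For the maximal operator (1) there is nothing to justify beyond noting that the family $\{\G_t^{\lambda,\eta,+}(x,y)\}_{t>0}$ is in $C_0$ for $x\neq y$, which uses the standard small-$t$ Gaussian decay and the large-$t$ behavior from \eqref{kerG}. For the $g$-functions (2) and the Lusin area integrals (6) I would apply $\partial_t^K \delta_{\eta,M,x}$ to both sides, differentiating under the integral; this is legitimate because for $x$ in a compact set away from $\supp f$ the pointwise bounds on $\partial_t^K \delta_{\eta,M,x} \G_t^{\lambda,\eta,+}(x,y)$ are integrable in $y$ against $|f(y)|\,dw_\lambda^+(y)$ and locally in $t$ (and, for (6), also in $z$ over the cone $A$). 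The resulting $\B$-valued Bochner integral equals $Tf(x)$ by the $L^2$-definition of $T$ and dominated convergence.

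For the multipliers (3) and (4), write
\[
\mathfrak{m}(z)D_\lambda f^\eta(z) = -\int_0^\infty \Phi(t)\, \partial_t\bigl(e^{-t|z|^2} D_\lambda f^\eta(z)\bigr)\, dt
\quad\text{or}\quad
\int_{(0,\infty)} e^{-t|z|^2} D_\lambda f^\eta(z)\, d\nu(t),
\]
apply $\check{D}_\lambda$ and restrict to $\R$. Interchanging $\check{D}_\lambda$ with the $t$- (resp.\ $\nu$-) integration is justified in $L^2$, producing $\mathcal{M}_{\mathfrak{m}}^{\lambda,\eta,+}f(x) = -\int_0^\infty \Phi(t)\,\partial_t \mathbb{W}_t^{\lambda,\eta,+}f(x)\,dt$ (resp.\ an integral against $d\nu$). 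Substituting the heat-kernel representation and exchanging the order of integration via Fubini gives the desired kernel formula; Fubini applies because $|\Phi|\le\|\Phi\|_\infty$ (resp.\ $|\nu|$ is finite on compact sets and $\G_t^{\lambda,\eta,+}$ is integrable in $t$ near $0$ and $\infty$ for $x\neq y$). The Riesz transforms (5) follow the same scheme starting from the subordination identity $|z|^{-|M|} = \Gamma(|M|/2)^{-1}\int_0^\infty t^{|M|/2-1} e^{-t|z|^2}\,dt$, valid for $z\in \RR\setminus\{0\}$.

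The main obstacle is precisely this interchange of operations: I need sufficiently strong off-diagonal bounds on $\partial_t^K \delta_{\eta,M,x} \G_t^{\lambda,\eta,+}(x,y)$, uniform for $(x,y)$ in compact sets of $\{x\neq y\}$, that are integrable against the relevant auxiliary measure on $(0,\infty)$ (or on $A$ for (6)) multiplied by $t^{2K+|M|-1}$ or $t^{|M|/2-1}$. These bounds are of the same nature as — in fact weaker than — the standard estimates \eqref{gr}--\eqref{sm2} that will be established for the kernels in the subsequent proposition; consequently the required dominated-convergence justifications can be quoted directly from, or modeled on, the pointwise estimates developed in Section~\ref{sec:kerest}, in much the same way as in \cite{CaSz,BCN}. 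Once all the exchanges are justified, the kernel formulas (1)--(6) fall out immediately.
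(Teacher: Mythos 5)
Your proposal is correct and follows essentially the same route as the paper: the paper's own proof simply invokes the procedure of \cite[Section 4]{BCN} (and \cite[Proposition 2.5]{Sz1} for the Lusin integrals), resting on Lemma~\ref{lem:3.5BCN}, the $L^2$-boundedness from Proposition~\ref{pro:L2b} and the kernel estimates of Section~\ref{sec:kerest} to justify exactly the interchanges of integration and differentiation you describe. Your write-up just makes that outline explicit case by case.
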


\begin{proof}
In the cases of
                $\mathbb{W}_{*}^{\lambda,\eta,+}$,
                $g^{\lambda,\eta,+}_{K,M}$,
                $\mathcal{M}^{\lambda,\eta,+}_{\mathfrak{m}}$ and
                $R_M^{\lambda,\eta,+}$
    we can proceed as in \cite[Section 4]{BCN}.
    The crucial facts needed in the reasoning are Lemma~\ref{lem:3.5BCN} and Propositions~\ref{pro:L2b} and \ref{pro:kerest}, which provide $L^2(dw_{\lambda}^+)$-boundedness of the investigated operators and the growth estimates for the corresponding kernels
    (in some places we need slightly stronger estimates than growth condition, which nevertheless are established in Section \ref{sec:kerest}).
    The case of $S^{\lambda,\eta,+}_{K,M}$ can be dealt with in a similar way. To be precise, we can proceed in much the same way as in
\cite[Proposition 2.5, pp.\,1528-1529]{Sz1}, where Lusin area integrals, in the context of discrete Dunkl setting, were investigated.
    We leave details to the reader.
    \end{proof}

\begin{pro}\label{pro:kerest}
Let $\lambda \in (-1/2, \infty)^n$ and $\eta \in \{ 0 , 1 \}^n$. Then the kernels \emph{(1)}-\emph{(6)} listed above satisfy the standard estimates
with the relevant Banach spaces $\mathbb{B}$. More precisely, the kernels \emph{(1)}-\emph{(5)} satisfy the smoothness conditions with $\gamma = 1$ and the
kernel \emph{(6)} satisfies \eqref{sm1} and \eqref{sm2} with any $\gamma \in (0,1/2]$ such that $\gamma < \min_{1 \le k \le n} (\lambda_k + 1/2)$.
\end{pro}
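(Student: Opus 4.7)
The proof plan is to reduce every estimate to pointwise control of $\partial_t^K \delta_{\eta,M,x}\G_t^{\lambda,\eta,+}(x,y)$ and of its $x$- and $y$-differences, and then to integrate those pointwise bounds against the appropriate Banach space norm. The central analytic tool is the classical integral (Poisson-type) representation for the modified Bessel function $I_\nu$, which, applied to each factor of $W_t^{\lambda+\eta}(x,y)$, expresses $\G_t^{\lambda,\eta,+}(x,y)$ as an integral of a tensor product of one-dimensional Gaussian-in-$t$ densities against a product measure on $[-1,1]^n$ (the $d\Omega_{\lambda+\eta}$ appearing in the paper's $\1$ macro, extended to $\nu\in(-1,-1/2]$ as in \cite{CaSz,NoSz}). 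Since the derivatives $\partial_t$, $\delta_j$ and $\delta_j^\ast$ all act on that tensor product in a controlled way, one obtains a uniform Gaussian-type representation for $\partial_t^K \delta_{\eta,M,x}\G_t^{\lambda,\eta,+}$.

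Starting from this representation I would establish the pointwise bound
\[
\bigl|\partial_t^K \delta_{\eta,M,x}\G_t^{\lambda,\eta,+}(x,y)\bigr|
\lesssim t^{-K-|M|/2}\,\G_t^{\lambda+\eta,\mathbf{0},+}(x,y),
\]
and, for $|x-y|>2|x-x'|$, the Hölder variant with an extra factor $(|x-x'|/\sqrt{t})^\gamma$ for any $\gamma\in[0,1]$. The $y$-difference estimate is symmetric. These reduce to elementary estimates on Gaussians and binomial-type manipulations inside the integrals over $[-1,1]^n$, and the integrability at the endpoints of the measure $d\Omega_{\lambda+\eta}$ imposes exactly the restriction $\gamma<\min_k(\lambda_k+1/2)$ when one wants to strip away a power of $x_j+y_j$. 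Kernels (1)--(5) are then handled by integrating in $t$ with the appropriate weight ($dt$ in $L^\infty$ for the maximal kernel, $t^{2K+|M|-1}dt$ for the $g$-kernel, $dt$ or $d\nu(t)$ for the multiplier kernels, $t^{|M|/2-1}dt$ for the Riesz kernel) and changing variables $t\mapsto |x-y|^2 t$; the resulting prefactor matches $1/w_\lambda^+(B(x,|x-y|))$ via the classical asymptotics $w_\lambda^+(B(x,r))\simeq r\prod_j(|x_j|+r)^{2\lambda_j}$, in the spirit of \cite{BCN,CaSz}. Convergence at $t\to 0$ is absorbed by the Gaussian, and at $t\to\infty$ it follows from the presence of at least one of $K$, $|M|$, the measure $\nu$, or the factor $t^{|M|/2-1}$.

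The main obstacle is kernel (6), the Lusin area kernel. Here the Banach norm is $L^2(A,t^{2K+|M|-1}dz\,dt)$ with $A=\{(z,t):|z|<\sqrt{t}\}$, so one must integrate over an entire parabolic cone and, in addition, carry the unbounded factor $\sqrt{\Xi_\lambda(x,z,t)}=\prod_j(x_j+z_j)^{\lambda_j}/\sqrt{V_{\sqrt{t}}^{\lambda_j,+}(x_j)}$. I would adapt the scheme of \cite[Prop.~2.5]{Sz1}: split $A$ according to whether $\sqrt{t}\lesssim |x-y|$ or $\sqrt{t}\gtrsim |x-y|$, and in the smoothness estimate split the $z$-region further into where $|z|$ is small enough that the $(x+z)$-Hölder bound applies and where one must use the growth bound for each term separately. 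Using \eqref{Vexplicit}, the weighted integral $\int \Xi_\lambda(x,z,t)\,dw_\lambda^+(x+z)$ over a ball of radius $\sqrt{t}$ is bounded, but only barely: the power $x_j^{2\lambda_j}$ at the coordinate hyperplanes forces the Hölder exponent to satisfy $\gamma<\lambda_k+1/2$ for every $k$, while the $t^{-1/2}$ produced by one Hölder derivative forces $\gamma\le 1/2$; together these give the stated range of $\gamma$. Once (sm1) is proved, (sm2) follows by the same argument since $\delta_{\eta,M}$ acts only on the $x$-variable and the pointwise estimates are symmetric in $(x,y)$ up to relabeling the $\eta$-reflection parameters.
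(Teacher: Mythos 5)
Your overall architecture coincides with the paper's: start from the representation \eqref{Bhk} (note that there the measures are $\Omega_{\lambda+\eta+\mathbf{1}+\eps}$, with orders strictly larger than $1/2$, so no distributional extension to $\nu\in(-1,-1/2]$ is ever needed), derive pointwise Gaussian-type bounds for $\partial_t^K\delta_{\eta,M,x}\G_t^{\lambda,\eta,+}$ (Lemma~\ref{lem:EST}), obtain smoothness via the Mean Value Theorem combined with the comparability of $q$ at intermediate points (Lemma~\ref{lem:theta}) and of the ball volumes (Lemma~\ref{lem:double}), and feed everything into a single integration lemma producing $|x-y|^{-u}w_\lambda^+(B(x,|x-y|))^{-1}$ (Lemma~\ref{lem:EST2}). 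For the Lusin kernel you also correctly locate the two sources of the restriction on $\gamma$: the weight $\prod_j(x_j+z_j)^{2\lambda_j}$ near the coordinate hyperplanes forces $\gamma<\lambda_k+1/2$, and one H\"older derivative costs $t^{-\gamma}$ in the parabolic scaling, forcing $\gamma\le 1/2$; these are exactly Lemmas~\ref{lem:intdifXi} and \ref{lem:intXi2}. The paper's decomposition of $A$ is by membership of $x+z$ and $x'+z$ in $\R$ (four regions), which is dictated by the indicators in the kernel, rather than by the size of $\sqrt t$ versus $|x-y|$; and for \eqref{sm2} of kernel (6) your appeal to ``symmetry in $(x,y)$'' is not right as stated (the kernel is manifestly asymmetric), though the $y$-difference is in fact the \emph{easier} case and holds with $\gamma=1$, since $\Xi_\lambda$ and the indicator do not depend on $y$.

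The one concrete defect is your central pointwise claim
$\bigl|\partial_t^K\delta_{\eta,M,x}\G_t^{\lambda,\eta,+}(x,y)\bigr|\lesssim t^{-K-|M|/2}\,\G_t^{\lambda+\eta,\mathbf{0},+}(x,y)$,
which is false as written: already for $K=|M|=0$ it asserts $(xy)^\eta W_t^{\lambda+\eta}(x,y)\lesssim W_t^{\lambda+\eta}(x,y)$, i.e.\ $(xy)^\eta\lesssim 1$. Even with the prefactor corrected, a bound by a single heat kernel of parameter $\lambda+\eta$ is too crude to close the argument, because integrating it in $t$ produces $w_{\lambda+\eta}^+(B(x,|x-y|))^{-1}$ rather than the required $w_{\lambda}^+(B(x,|x-y|))^{-1}$; recovering the latter needs the explicit powers $x^{2\eps-\alpha\eps+\eta-\zeta\eta}y^{2\eps-\beta\eps+\eta-\rho\eta}$ with their compensating powers of $t$, which is precisely the bookkeeping performed in Lemma~\ref{lem:EST} and then consumed by Lemma~\ref{lem:EST2}. (Relatedly, any such bound can only hold with a degraded Gaussian constant, $\exp(-q/8t)$ in place of $\exp(-q/4t)$, to absorb factors like $|x-y|/t$.) So the plan is the right one, but the key estimate must be stated in the refined, term-by-term form; as written, the step would fail.
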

The proof of Proposition~\ref{pro:L2b} is given in Section~\ref{sec:as} and the proof of Proposition~\ref{pro:kerest}, which is the most technical part of the paper, is located in Section~\ref{sec:kerest}.

We conclude this section with various comments and remarks connected with our main results. We first note that our results imply analogous results for similar objects based on Poisson-Dunkl semigroup.
More precisely, let $\{\mathbb{P}_t^{\lambda}\}_{t>0}$ be the Poisson-Dunkl semigroup generated by $-\sqrt{\Delta_{\lambda}}$,
\[
\mathbb{P}_t^{\lambda} f =
\check{D}_\lambda \big(  e^{-t|z|} D_\lambda f(z)  \big),
\qquad f \in L^2(dw_\lambda),
\]
and for each $\eta \in \{ 0,1 \}^n$ consider an auxiliary Poisson-Bessel-type semigroup
\[
\mathbb{P}_t^{\lambda,\eta,+} f =
\Big( \check{D}_\lambda \big( e^{-t|z|} D_\lambda f^\eta (z) \big) \Big)^+,
\qquad f \in L^2(dw^+_\lambda).
\]
Obviously, by the subordination principle,
\begin{equation}\label{subprince}
\mathbb{P}_t^{\lambda} f (x)=
\int_0^\infty \mathbb{W}_{t^2/(4u)}^{\lambda} f(x) \frac{e^{-u} \, du}{\sqrt{\pi u}},
\qquad
\mathbb{P}_t^{\lambda,\eta,+} f (x)=
\int_0^\infty \mathbb{W}_{t^2/(4u)}^{\lambda,\eta,+} f(x) \frac{e^{-u} \, du}{\sqrt{\pi u}}.
\end{equation}
We focus on the maximal operators, Littlewood-Paley-Stein type mixed $g$-functions and multipliers of Laplace and Laplace-Stieltjes transform type based on these semigroups. In the definitions (1), (2) we exchange $\mathbb{W}_{t}^{\lambda}$ and $\mathbb{W}_{t}^{\lambda,\eta,+}$ with $\mathbb{P}_{t}^{\lambda}$ and $\mathbb{P}_{t}^{\lambda,\eta,+}$, respectively. Further, in (2) we choose $L^2(t^{ 2K + 2|M|  -1}dt)$ instead of
$L^2(t^{ 2K + |M| -1}dt)$ and replace multipliers in (3) and (4) by
\[
|z| \int_0^{\infty} e^{-t|z|} \Phi(t)\, dt
\qquad
\textrm{and}
\qquad
\int_{(0,\infty)} e^{-t|z|} \, d\nu (t),
\]
respectively, leaving the assumptions on $\Phi(t)$ and $\nu$ unchanged.
Then, with the aid of \eqref{subprince}, analogous results to Theorems \ref{thm:main} and \ref{thm:CZ} are valid for these new operators.
The proof is based on a similar procedure to that described in \cite{BCN} or \cite[Section 3]{NoSz}; we leave details to the reader.

The treatment of Lusin area integrals based on the Poisson semigroup is more subtle
and demands more effort and explanation. Consider the following square functions based on the Poisson-Dunkl semigroup and the auxiliary Poisson-Bessel-type counterparts:
\begin{align*}
S^{\lambda}_{P,K,M}(f)(x) =&
\bigg( \int_{\Gamma(x)} t^{2K+2|M|-1} \big|\partial_{t}^K (T^\lambda)^M \mathbb{P}_{t}^{\lambda}f(z)\big|^{2}\frac{dw_{\lambda}(z) \, dt}
{V_{t}^{\lambda}(x)} \bigg)^{1\slash 2},\quad x \in \RR, \\
S^{\lambda,\eta,+}_{P,K,M}(f)(x) =&
\bigg( \int_{\Gamma(x)} t^{2K+2|M|-1} \big|\partial_{t}^K \delta_{\eta,M} \mathbb{P}_{t}^{\lambda,\eta,+}f(z)\big|^{2}
\, \chi_{ \{z \in \R \} }
\frac{dw^+_{\lambda}(z) \, dt}
{V_{t}^{\lambda,+}(x)} \bigg)^{1\slash 2},\quad x \in \R,
\end{align*}
where $M \in \N^n$, $K \in \N$, $|M|+K>0$, and $\Gamma(x)$ is the cone with vertex at $x$,
\[
\Gamma(x) = (x,0) + \Gamma, \qquad
\Gamma = \Big\{(z,t)\in \RR \times(0,\infty) : |z|<t\Big\}.
\]
Our main result concerning these operators reads as follows.
\begin{thm} \label{thm:LAI-Poisson}
Assume that $\lambda \in(-1\slash 2,\infty)^{n}$ and $W$ is a weight on $\RR$ invariant under the reflections $\sigma_1, \ldots ,\sigma_n$.
Then the Lusin area integrals $S^{\lambda}_{P,K,M}$ are bounded on $L^{p}(\RR,Wdw_{\lambda})$, $W^{+}\in A_{p}^{\lambda,+}$, $1<p<\infty$, and from $L^{1}(\RR,Wdw_{\lambda})$ to weak $L^{1}(\RR,Wdw_{\lambda})$, $W^{+}\in A_{1}^{\lambda,+}$.
Moreover, the Poisson-Bessel-type Lusin area integrals $S^{\lambda,\eta,+}_{P,K,M}$, $\eta \in \{ 0,1 \}^n$, are (vector-valued) Calder\'on-Zygmund operators in the sense of the space of homogeneous type $(\R,dw_{\lambda}^+,|\cdot|)$ associated with the Banach spaces
$\mathbb{B} = L^2(\Gamma,t^{2K+2|M|-1}dzdt)$.
\end{thm}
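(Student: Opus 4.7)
The plan is to mimic, for the Poisson semigroup, the scheme used to deduce Theorem~\ref{thm:main} from Theorem~\ref{thm:CZ}. First, I reduce the assertion about $S^\lambda_{P,K,M}$ to a Calder\'on-Zygmund claim about the auxiliary operators $S^{\lambda,\eta,+}_{P,K,M}$, $\eta \in \{0,1\}^n$, by the same symmetrization/reflection-decomposition procedure invoked just before Theorem~\ref{thm:main+} (compare \cite[p.~6]{NoSt4} and \cite{Sz1}). Once each $S^{\lambda,\eta,+}_{P,K,M}$ is recognized as a vector-valued Calder\'on-Zygmund operator with respect to $(\R,dw_\lambda^+,|\cdot|)$, the weighted $L^p$ bounds and the weak-type $(1,1)$ estimate for $S^{\lambda}_{P,K,M}$ follow from the general theory together with this reduction.

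To establish the Calder\'on-Zygmund character of $S^{\lambda,\eta,+}_{P,K,M}$ I verify three items: (i) $L^2(dw_\lambda^+)$-boundedness; (ii) association with the candidate kernel
\[
\mathcal{S}^{\lambda,\eta,+}_{P,K,M}(x,y)=
\Big\{\partial_{t}^K \delta_{\eta,M,\mathbf{x}} \mathbb{P}_{t}^{\lambda,\eta,+}(\mathbf{x},y)\big|_{\mathbf{x} = x+z}\sqrt{\Xi_{\lambda}(x,z,t)}\,\chi_{\{x+z\in\R\}}\Big\}_{(z,t) \in \Gamma},
\]
valued in $\mathbb{B} = L^2(\Gamma, t^{2K+2|M|-1}dz\,dt)$, where $\mathbb{P}_t^{\lambda,\eta,+}(x,y)$ denotes the integral kernel of $\mathbb{P}_t^{\lambda,\eta,+}$ obtained from $\G_s^{\lambda,\eta,+}(x,y)$ via subordination \eqref{subprince}; and (iii) the standard estimates \eqref{gr}--\eqref{sm2} for $\mathcal{S}^{\lambda,\eta,+}_{P,K,M}$. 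For (i) the cleanest route is Fubini (using the doubling of $V_t^{\lambda,+}$) to reduce the squared $L^2(dw_\lambda^+)$-norm to a Littlewood-Paley $g$-function estimate in the Poisson time $t$, which in turn becomes, via Dunkl-Plancherel applied to $f^\eta$ and the identity $(T^\lambda)^M \psi_z^\lambda = (iz)^M \psi_z^\lambda$, a convergent Gamma-function integral; the exponent $2K+2|M|-1$ is exactly what is needed for convergence. Item (ii) then follows by a routine Fubini argument as in \cite[Section~4]{BCN} and \cite[Proposition~2.5]{Sz1}, provided (iii) ensures enough off-diagonal integrability.

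The bulk of the work lies in (iii), the analog of Proposition~\ref{pro:kerestLAIP}. I combine the subordination identity
\[
\partial_t^K \mathbb{P}_t^{\lambda,\eta,+}(\mathbf{x},y) =
\int_0^\infty \partial_t^K\Big[\G_{t^2/(4u)}^{\lambda,\eta,+}(\mathbf{x},y)\Big]\frac{e^{-u}\,du}{\sqrt{\pi u}}
\]
with the chain-rule expansion of $\partial_t^K$ as a linear combination of terms of the form $t^{2c-K}u^{-c}\partial_s^c \G_s^{\lambda,\eta,+}|_{s=t^2/(4u)}$ for integers $c$ with $\lceil K/2\rceil \le c \le K$. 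Applying Minkowski's inequality in $u$ to the $\mathbb{B}$-norm of each summand and substituting $s = t^2/(4u)$ inside $\Gamma$ transforms the weight $t^{2K+2|M|-1}dt$ into one comparable to $u^{|M|}s^{2c+|M|-1}ds$, matching exactly the weight in the heat Lusin-area kernel space $L^2(A,s^{2c+|M|-1}dz\,ds)$ used in Proposition~\ref{pro:kerest}~(6). This reduces the Poisson $\mathbb{B}$-norms of the kernel differences $\mathcal{S}^{\lambda,\eta,+}_{P,K,M}(x,y)-\mathcal{S}^{\lambda,\eta,+}_{P,K,M}(x',y)$ and $\mathcal{S}^{\lambda,\eta,+}_{P,K,M}(x,y)-\mathcal{S}^{\lambda,\eta,+}_{P,K,M}(x,y')$ to weighted integrals in $u$ of the corresponding heat kernel differences, to which the standard estimates for the heat Lusin-area kernel apply.

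The principal obstacle, flagged in the paragraph preceding Proposition~\ref{pro:kerestLAIP}, is the mismatch between the conic region $\Gamma=\{(z,t):|z|<t\}$ used in the Poisson case and the parabolic region $A=\{(z,s):|z|<\sqrt{s}\}$ used in the heat case: after the substitution $s=t^2/(4u)$, the constraint $|z|<t$ becomes $|z|<2\sqrt{us}$, larger than $|z|<\sqrt{s}$ for $u>1/4$ and smaller for $u<1/4$. Splitting the $u$-integral at $u\sim 1$ and using the decay $e^{-u}/\sqrt{u}$ in the subordination measure to tame the large-$u$ regime, together with careful bookkeeping of the weight $\sqrt{\Xi_\lambda(x,z,t)}$ across the two regions, should yield \eqref{gr}--\eqref{sm2} for $\mathcal{S}^{\lambda,\eta,+}_{P,K,M}$ with some $\gamma\in (0,1/2]$ satisfying $\gamma < \min_{1\le k\le n}(\lambda_k+1/2)$, analogously to the heat case.
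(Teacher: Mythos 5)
Your overall architecture coincides with the paper's: reduce the statement about $S^{\lambda}_{P,K,M}$ to the Calder\'on--Zygmund property of the $\eta$-components $S^{\lambda,\eta,+}_{P,K,M}$ via the reflection decomposition, dispose of the $L^2$-boundedness and the kernel association by repeating the heat-case arguments, and concentrate the work in the standard estimates for $\mathcal{S}^{\lambda,\eta,+}_{P,K,M}$, attacked through the subordination formula \eqref{subprinc}, the chain rule (the paper uses Fa\`a di Bruno's formula \eqref{Faa}, which for $f(t)=t^2/(4u)$ is exactly your expansion with $c=k_1+k_2$), and Minkowski's inequality in $u$. One small but consequential slip: the kernel must carry $\sqrt{\Xi_\lambda(x,z,t^2)}$, not $\sqrt{\Xi_\lambda(x,z,t)}$, since the normalization in $S^{\lambda,\eta,+}_{P,K,M}$ over the cone $\Gamma$ is $V_t^{\lambda,+}(x)=V_{\sqrt{t^2}}^{\lambda,+}(x)$; with your version the bookkeeping below would not close.

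The genuine weak point is your treatment of the region/weight mismatch, which is precisely the issue the authors flag as ``not straightforward.'' Your diagnosis locates the danger at large $u$ and proposes to kill it with $e^{-u}$; but $e^{-u}$ absorbs any polynomial growth there, so large $u$ is harmless. The delicate regime is $u\to 0^+$, where the chain-rule factor $u^{-c}$, the normalization ratio between $V^{\lambda,+}_{2\sqrt{us}}$ and $V^{\lambda,+}_{\sqrt{s}}$, and the smoothness gain $(|x-x'|^2/(4us))^{\gamma}=(4u)^{-\gamma}(|x-x'|^2/s)^{\gamma}$ all produce negative powers of $u$ against the barely integrable measure $e^{-u}\,du/\sqrt{u}$; invoking Remark~\ref{rem:ext} as a black box gives no control on how the implied constants depend on the aperture $\beta=2\sqrt{u}$. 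The paper sidesteps this by ordering the substitutions differently: first change $t^2\mapsto t$ alone, which maps $\Gamma$ onto $A$ exactly and leaves the weight as the standard $\Xi_\lambda(x,z,t)$, so that Lemmas~\ref{lem:intXi}, \ref{lem:intdifXi} and \ref{lem:intXi2} apply verbatim with no $u$-dependence; the parameter $u$ then enters only through the evaluation point $r=t/(4u)$ of the heat kernel, is bounded pointwise via Lemmas~\ref{lem:EST} and \ref{lem:qz} and \eqref{est111}, and is removed by the final one-dimensional change $t/u\mapsto t$ before applying Lemma~\ref{lem:EST2}; the resulting $u$-powers are $u^{(|M|+|\tau|-1)/2}$ (and $u^{-\gamma/2}$ in the smoothness bounds), which are integrable against $e^{-u/2}\,du$ because $\gamma\le 1/2$. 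Your route can be repaired by observing that these lemmas are stated for arbitrary $t>0$ and hence hold with $t$ replaced by $4us$, so the aperture dependence is explicit and polynomial; but as written, the proposal neither identifies the correct dangerous regime nor verifies the convergence of the $u$-integral, and this verification is the substance of Proposition~\ref{pro:kerestLAIP}.
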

A careful repetition of the arguments justifing Theorems~\ref{thm:main} and \ref{thm:CZ} allows us to reduce proving Theorem~\ref{thm:LAI-Poisson} to showing the standard estimates for the kernels associated with $S^{\lambda,\eta,+}_{P,K,M}$, $\eta \in \{ 0,1 \}^n$,
which are defined as
\[
\mathcal{S}^{\lambda,\eta,+}_{P,K,M}(x,y)
=
\left\{
\partial_{t}^K \delta_{\eta,M,\mathbf{x}} \mathbb{P}_{t}^{\lambda,\eta,+}(\mathbf{x},y)
\Big|_{\mathbf{x} = x+z} \sqrt{\Xi_{\lambda}(x,z,t^2)}
\,\chi_{\{x+z\in\R\}} \right\}_{(z,t) \in \Gamma},
\]
where
\begin{equation}\label{subprinc}
\mathbb{P}_{t}^{\lambda,\eta,+}(x,y)
=
\int_0^\infty \G_{t^2/(4u)}^{\lambda,\eta,+} (x,y) \frac{e^{-u} \, du}{\sqrt{\pi u}}.
\end{equation}
Since it seems not to be straightforward to obtain these estimates from Proposition~\ref{pro:kerest} via \eqref{subprinc}, see the comment in \cite[p.\,1526]{Sz1}, we give the proof of the following result at the end of Section~\ref{sec:kerest}.
\begin{pro}\label{pro:kerestLAIP}
Let $\lambda \in (-1/2, \infty)^n$ and $\eta \in \{ 0 , 1 \}^n$. Then the kernels $\mathcal{S}^{\lambda,\eta,+}_{P,K,M}$ satisfy the standard estimates
with the corresponding Banach spaces
$\mathbb{B} = L^2(\Gamma,t^{2K + 2|M| - 1}dzdt)$. More precisely, they satisfy \eqref{sm1} and \eqref{sm2} with any $\gamma \in (0,1/2]$ such that
$\gamma < \min_{1 \le k \le n} (\lambda_k + 1/2)$.
\end{pro}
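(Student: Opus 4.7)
The plan is to mimic the strategy used in Section~\ref{sec:kerest} for the heat--Bessel Lusin area kernel \emph{(6)}, but starting from the subordination formula \eqref{subprinc}. The reason for not invoking Proposition~\ref{pro:kerest} as a black box is that the natural change of variables $s=t^2/(4u)$ intertwines the two integrations defining the $\mathbb{B}$-norm of $\mathcal{S}^{\lambda,\eta,+}_{P,K,M}(x,y)$, so that the cone $\Gamma$ (where $|z|<t$) together with the weight $t^{2K+2|M|-1}$ does not match directly the cone $A$ (where $|z|<\sqrt{t}$) with the weight $s^{2K+|M|-1}$.

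First I would interchange $\partial_t^K \delta_{\eta,M,\mathbf{x}}$ with the subordination integral and apply Fa\`a di Bruno to $\partial_t^K$ acting on $\G_{t^2/(4u)}^{\lambda,\eta,+}(\mathbf{x},y)$. After the substitution $s=t^2/(4u)$, one obtains a finite sum of the shape
\[
\partial_t^K \delta_{\eta,M,\mathbf{x}} \mathbb{P}_t^{\lambda,\eta,+}(\mathbf{x},y)
=
\sum_{\ell=\lceil K/2\rceil}^{K} c_{K,\ell}\, t^{2\ell-K}
\int_0^\infty u^{-\ell-1/2} e^{-u}\,
\partial_s^\ell \delta_{\eta,M,\mathbf{x}}\G_s^{\lambda,\eta,+}(\mathbf{x},y)\Big|_{s=t^2/(4u)}\,du.
\]
Here only pointwise information on $\partial_s^\ell \delta_{\eta,M,\mathbf{x}}\G_s^{\lambda,\eta,+}$ enters, and such bounds are precisely what Section~\ref{sec:kerest} produces when treating the heat kernel (6); these are in fact the building blocks, not merely their $L^2$-consequences. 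I would collect, from the proof of Proposition~\ref{pro:kerest}, uniform pointwise estimates of the form
\[
\bigl|\partial_s^\ell \delta_{\eta,M,\mathbf{x}}\G_s^{\lambda,\eta,+}(\mathbf{x},y)\bigr|
\lesssim \frac{1}{s^{\ell+|M|/2}}\,\frac{1}{w_\lambda^+(B(\mathbf{x},\sqrt{s}))}
\exp\Bigl(-c\tfrac{|\mathbf{x}-y|^2}{s}\Bigr),
\]
together with the analogous H\"older estimate in $\mathbf{x}$ (or $y$) picking up a factor $(|\mathbf{x}-\mathbf{x}'|/\sqrt{s})^{\gamma}$ with $\gamma$ in the indicated range. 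These are exactly what is used internally to prove the vector-valued standard estimates for kernel (6).

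Inserting the pointwise bound into the displayed formula for $\partial_t^K \delta_{\eta,M,\mathbf{x}} \mathbb{P}_t^{\lambda,\eta,+}$ and proceeding by Minkowski's integral inequality, the $\mathbb{B}=L^2(\Gamma,t^{2K+2|M|-1}dz\,dt)$-norm splits as
\[
\Bigl\|\mathcal{S}^{\lambda,\eta,+}_{P,K,M}(x,y)\Bigr\|_{\mathbb{B}}
\lesssim
\sum_\ell \int_0^\infty u^{-\ell-1/2} e^{-u}
\Bigl\| t^{2\ell-K} \partial_s^\ell \delta_{\eta,M,\mathbf{x}}\G_s^{\lambda,\eta,+}(\mathbf{x},y)\big|_{s=t^2/(4u)}\sqrt{\Xi_\lambda(x,z,t^2)}\Bigr\|_{L^2(\Gamma,\,t^{2K+2|M|-1}dz\,dt)}
\,du.
\]
Changing variables back to $s=t^2/(4u)$ in the inner norm transforms the cone $\Gamma$ into (a dilate of) $A$ with the correct weight $s^{2K+|M|-1}$, and transforms $\Xi_\lambda(x,z,t^2)$ into $\Xi_\lambda(x,z,4us)$; the factor $u$ comes out explicitly and is absorbed by the exponential $e^{-u}$ in the $u$-integral, yielding convergence. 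In this way the task is reduced to the growth/smoothness estimates for a heat--Bessel kernel of the same shape as kernel~(6), which are the content of Proposition~\ref{pro:kerest}, run with the parameter $\ell$ instead of $K$.

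The smoothness estimates \eqref{sm1}, \eqref{sm2} are handled identically by replacing the pointwise heat kernel bound with the corresponding H\"older bound, which contributes the factor $(|x-x'|/|x-y|)^\gamma$ (resp.\ $(|y-y'|/|x-y|)^\gamma$) uniformly in $s$; the restriction $\gamma\le 1/2$ and $\gamma<\min_k(\lambda_k+1/2)$ comes entirely from the heat-side estimates and is therefore inherited. The main obstacle, and the reason an indirect subordination does not suffice, is the simultaneous coupling between $s$, $t$ and $u$ inside both the cone condition and the weight $\Xi_\lambda$; the extraction of uniform-in-$s$ pointwise heat kernel estimates from the proof of Proposition~\ref{pro:kerest}, and the verification that after the $s\leftrightarrow t$ change of variables the geometric data of $\Gamma$ is controlled by that of $A$ uniformly in the subordination parameter $u$, are the technical core of the argument.
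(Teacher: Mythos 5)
Your proposal follows essentially the same route as the paper's proof: the subordination formula together with Fa\`a di Bruno's formula, Minkowski's integral inequality, a change of variables turning the cone $\Gamma$ into a parabolic one, and then the pointwise heat-kernel estimates of Section~\ref{sec:kerest} (Lemma~\ref{lem:EST} and its integrated consequences) fed into the same machinery as for the kernel $\mathcal{S}^{\lambda,\eta,+}_{K,M}$. The only notable difference is that the paper first substitutes $t^2\mapsto t$ so that the cone stays fixed as $A$ and only rescales $t/u\mapsto t$ at the very end, after integrating out $z$ via Lemma~\ref{lem:intXi}, thereby sidestepping the $u$-dependent aperture $|z|<2\sqrt{u}\sqrt{s}$ that your full substitution $s=t^2/(4u)$ produces and whose uniform-in-$u$ control you acknowledge but do not carry out.
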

We now focus on the relation between Bessel-type objects and the operators associated with the Bessel setting from the papers \cite{BCN,CaSz}.
Note that choosing $\eta_0=(0,\ldots,0)$, in view of symmetry reasons, we have $\mathbb{W}_t^{\lambda,\eta_0,+} = W_t^\lambda$, where $W_t^\lambda$ is the heat-Bessel semigroup considered in the above mentioned papers.
Consequently, many results of \cite{CaSz} can be seen as special cases of Theorem~\ref{thm:CZ} (specified to $\eta = \eta_0$).
However, some definitions in our present situation, in particular when dealing with $\eta = \eta_0$, are a little bit different from the ones used in \cite{BCN,CaSz}.
This concerns especially mixed $g$-functions and higher order Riesz transforms. These new definitions, however, seem to be more natural and appropriate;
see comments in \cite{NoSt5}, where a symmetrization procedure for general orthogonal expansions was proposed and \cite{L}, where this procedure led to the symmetrized Jacobi setting that is connected with the initial context of Jacobi expansions in a similar way as our setting with the Bessel one.

Further, note that Lusin area integrals, which have more complex structure than the mixed $g$-functions, were considered neither in \cite{BCN} nor in \cite{CaSz}.
Consequently, our present results provide some new results in the Bessel setting. This concerns not only
alternatively defined mixed $g$-functions and higher order Riesz transforms, but also mixed Lusin area integrals.
 
Next, we give a comment concerning multipliers of Laplace-Stieltjes transform type.
Using standard arguments, see the comment at the end of \cite[Section 2]{CaSz}, it can be shown that these multipliers and their Bessel-type counterparts are bounded on $L^p(dw_\lambda)$ and $L^p(dw_\lambda^+)$, $1 \le p \le \infty$, respectively.
Here, however, Theorems~\ref{thm:main} and \ref{thm:main+} deliver also weighted $L^p$ mapping properties.

\begin{rem}\label{rem:ext}
The exact aperture of the parabolic cone $A$ is not essential for our developments. Indeed, if we fix $\beta>0$ and write $A_{\beta}=\Big\{(z,t)\in \RR\times(0,\infty) : |z|<\beta\sqrt{t}\Big\}$ instead of $A$ in the definitions of mixed Lusin area integrals, then the results of this paper, and in particular
Theorems \ref{thm:main}-\ref{thm:CZ}, remain valid.
Similar remark concerns the aperture of the cone $\Gamma$ and the results contained in Theorem~\ref{thm:LAI-Poisson}.
\end{rem}

%%%%%%%%%%%%%%%%%%%%%%%%%%%%%%%%%%%%%%%%%%%%%%%%%%%%%%%%%%%%%%%%%%%%%%%%%%%%%%%%%%%%
\section{$L^{2}(dw_\lambda^+)$-boundedness} \label{sec:as}
%%%%%%%%%%%%%%%%%%%%%%%%%%%%%%%%%%%%%%%%%%%%%%%%%%%%%%%%%%%%%%%%%%%%%%%%%%%%%%%%%%%%

We first collect some auxiliary results which will be needed to establish
Proposition~\ref{pro:L2b}.
In what follows, we will frequently use the fact that the Dunkl transform $D_\lambda$ and its inverse $\check{D}_\lambda$ preserve $\eta$-symmetric functions, namely
\[
D_\lambda \left( L^2(dw_\lambda)_\eta \right) = L^2(dw_\lambda)_\eta, \quad 
\check{D}_\lambda \left( L^2(dw_\lambda)_\eta \right) = L^2(dw_\lambda)_\eta, \qquad 
\eta \in \{0,1\}^n,
\]
where $L^2(dw_\lambda)_\eta = \{ f_\eta : f \in L^2(dw_\lambda)\}$. 

\begin{lem}\label{lem:3.4BCN}
    Let $\lambda \in (-1/2,\infty)^n$, $\eta \in \{0,1\}^n$ and $M \in \N^n$. Then,
    $$\delta_{\eta,M,x}\Big[  (xz)^\eta \varphi_z^{\lambda+\eta}(x) \Big]
        = C_{M,\eta} z^M  (xz)^{\epsilon} \varphi_z^{\lambda+\epsilon}(x), \quad x,z \in \RR,
$$
where
$C_{M,\eta} = (-1)^{\sum_{j=1}^n (1-\eta_j)\overline{M_j} + \lfloor M_j/2\rfloor }$
and $\epsilon = \epsilon(\eta,M) = \eta(\mathbf{1}-\overline{M}) + (\mathbf{1}-\eta)\overline{M}$.
Moreover, $\epsilon \in \mathbb{Z}^n_2$ and
$\epsilon = M \oplus_{\mathbb{Z}_2^n} \eta$.
\end{lem}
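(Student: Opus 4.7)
The plan is to exploit the tensor-product structure of all ingredients. Both the function $(xz)^\eta \varphi_z^{\lambda+\eta}(x) = \prod_{j=1}^n (x_jz_j)^{\eta_j} \varphi_{z_j}^{\lambda_j+\eta_j}(x_j)$ and the operator $\delta_{\eta,M} = \delta_{1,\eta_1,M_1}\circ\cdots\circ\delta_{n,\eta_n,M_n}$ split across the $n$ coordinates, so the identity reduces to the one-dimensional version
\begin{equation*}
\delta_{j,\eta_j,M_j}\bigl[ (x_j z_j)^{\eta_j} \varphi_{z_j}^{\lambda_j+\eta_j}(x_j) \bigr]
= (-1)^{(1-\eta_j)\overline{M_j} + \lfloor M_j/2 \rfloor}\, z_j^{M_j}\, (x_j z_j)^{\epsilon_j}\, \varphi_{z_j}^{\lambda_j+\epsilon_j}(x_j),
\end{equation*}
after which we take the product over $j$.

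For the one-dimensional statement (dropping the subscript $j$), the proof rests on the two base identities
\begin{equation*}
\delta\, \varphi_z^\lambda(x) = -z\,(xz)\,\varphi_z^{\lambda+1}(x), \qquad
\delta^*\bigl[(xz)\,\varphi_z^{\lambda+1}(x)\bigr] = z\,\varphi_z^\lambda(x),
\end{equation*}
which I would derive by differentiating term by term the defining power series for $\varphi_z^\mu$; equivalently, they encode the standard Bessel recurrences $(u^{-\nu}J_\nu(u))' = -u^{-\nu}J_{\nu+1}(u)$ and $(u^{\nu+1}J_{\nu+1}(u))' = u^{\nu+1}J_\nu(u)$ with $\nu = \lambda-1/2$ and $u=xz$, after noting $\delta^*(x f(x)) = x^{-2\lambda}(x^{2\lambda+1} f(x))'$.

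These two identities toggle between the ``even form'' $\varphi_z^\mu(x)$ and the ``odd form'' $(xz)\varphi_z^{\mu+1}(x)$, introducing a factor $-z$ in one direction and $+z$ in the other. Since, by its very definition, $\delta_{j,\eta_j,M_j}$ is an alternating composition of exactly $M_j$ operators drawn from $\{\delta,\delta^*\}$, starting with the one that matches the $\eta_j$-symmetry of the input, iterating the two base identities shows that $\delta_{j,\eta_j,M_j}[(xz)^{\eta_j}\varphi_z^{\lambda+\eta_j}(x)]$ lands in the claimed form with terminal parity $\epsilon_j = \eta_j \oplus \overline{M_j}$ (i.e.\ $\epsilon_j = \eta_j(1-\overline{M_j})+(1-\eta_j)\overline{M_j}$) and accumulated factor $z^{M_j}$ times the sign $(-1)^N$, where $N$ is the number of $\delta$-steps. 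A direct case check ($\eta_j\in\{0,1\}$, $M_j$ even vs.\ odd) gives $N = \lfloor M_j/2\rfloor + (1-\eta_j)\overline{M_j}$, which is exactly the exponent in $C_{M,\eta}$. Multiplying over $j$ yields the full identity, and the ancillary assertions $\epsilon\in\mathbb{Z}_2^n$ and $\epsilon = M\oplus_{\mathbb{Z}_2^n}\eta$ follow at once from $\overline{M_j}=M_j-2\lfloor M_j/2\rfloor$. The only genuine obstacle here is the bookkeeping of parities and signs through the alternating composition; the underlying calculus is entirely elementary.
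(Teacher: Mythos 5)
Your proof is correct and follows essentially the same route as the paper: reduce to $n=1$ by the tensor-product structure and then invoke the two first-order Bessel recurrences (your $\delta^*$-identity is exactly the paper's recurrence $\varphi_z^{\lambda}=(2\lambda+1)\varphi_z^{\lambda+1}-(xz)^2\varphi_z^{\lambda+2}$ combined with the derivative formula), and your sign count $N=\lfloor M_j/2\rfloor+(1-\eta_j)\overline{M_j}$ checks out in all four parity cases. The only cosmetic difference is that the paper dispatches pairs of factors at once via $\delta_{\eta,2}=-\Delta_{\lambda}$ on $\eta$-symmetric functions and the eigenvalue relation \eqref{Dunkleigenfv}, reducing to $M=0,1$, whereas you iterate the first-order toggles directly with explicit bookkeeping.
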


\begin{proof}
    By the tensor product structure of $\varphi_z^\lambda$ and $\delta_{\eta,M,x}$
    it is sufficient to analyze the one-dimensional situation $(n=1)$.
Further, since $(xz)^\eta \varphi_z^{\lambda+\eta}(x)$ is
    $\eta$-symmetric with respect to $x$, having in mind \eqref{defpsi} and \eqref{Dunkleigenfv}, it is easy to deduce that
    $$
\delta_{\eta,2,x} \Big[  (xz)^\eta \varphi_z^{\lambda+\eta}(x) \Big]
 =
- \Delta_{\lambda,x} \Big[  (xz)^\eta \varphi_z^{\lambda+\eta}(x) \Big]
        = - z^2 (xz)^\eta \varphi_z^{\lambda+\eta}(x), \qquad x,z \in \mathbb{R}, \quad \eta=0,1.$$
Iterating this identity we see that our task can be reduced to the cases $M=0,1$.
This, however, is a straightforward consequence of the identities
\[
\partial_x \varphi_z^{\lambda}(x) = -z(xz) \varphi_z^{\lambda + 1}(x),
\qquad
\varphi_z^{\lambda}(x) = (2\lambda + 1)\varphi_z^{\lambda + 1}(x)
-(xz)^2 \varphi_z^{\lambda + 2}(x), \quad x,z\in \mathbb{R},
\]
which can easily be verified by means of \cite[(5.3.5)]{Leb} and \cite[(5.3.6)]{Leb}, respectively.
\end{proof}

From the asymptotic behavior of the Bessel function $J_\nu$, $\nu>-1$,
$$J_{\nu}(z) \simeq z^{\nu}, \quad z \to 0^+, \qquad
  J_{\nu}(z) = \mathcal{O}\Big( \frac{1}{\sqrt{z}}\Big), \quad z \to \infty,$$
we can estimate the one-dimensional functions $\varphi_z^{\lambda}$ as follows,
see \cite[Section 2]{CaSz},
\begin{equation}\label{estvarphi}
    |\varphi_z^{\lambda}(x)|
        \lesssim \left\{
            \begin{array}{rl}
                1, & |xz| \leq 1 \\
                |xz|^{-\lambda}, & |xz| > 1
            \end{array} \right., \quad x,z \in \mathbb{R}.
\end{equation}

\begin{lem}\label{lem:3.5BCN}
    Let $\lambda \in (-1/2,\infty)^n$, $\eta \in \{0,1\}^n$ and $f \in L^p(\R,Udw_\lambda^+)$, $U \in A_p^{\lambda,+}$, $1 \le p < \infty$.
    Then, $\mathbb{W}_t^{\lambda,\eta,+} f (x)$
    is a $C^\infty$ function of $(x,t) \in \R \times \mathbb{R}_+$. Moreover, if $K \in \N$ and
    $M \in \N^n$ we have
    $$\partial_t^K \delta_{\eta,M}\mathbb{W}_t^{\lambda,\eta,+} f (x)
        = \int_{\R} \partial_t^K \delta_{\eta,M,x} \G_t^{\lambda,\eta,+} (x,z) f(z) dw_\lambda^+(z),
\qquad x \in \R, \quad t>0.$$
    Further, if $f \in L^2(dw_{\lambda}^+ )$, then for every $x\in \R$ and $t>0$ we have
    $$\partial_t^K \delta_{\eta,M}\mathbb{W}_t^{\lambda,\eta,+} f (x)
        = \int_{\R} \partial_t^K e^{-t|z|^2} \delta_{\eta,M,x}\Big[  (ixz)^\eta \varphi_z^{\lambda+\eta}(x) \Big] D_{\lambda} f^\eta(z) dw_{\lambda}^+(z).$$
    Furthermore, if $f \in C_c^\infty(\R)$, then
$(D_\lambda f^\eta)^+, (\check{D}_\lambda f^\eta)^+ \in C^\infty(\R)$ and we have
\begin{align*}
\delta_{\eta,M} (\check{D}_\lambda f^\eta)^+ (x)
&=
\int_{\R} \delta_{\eta,M,x} \Big[ (ixz)^\eta \varphi_z^{\lambda+\eta}(x) \Big]
f(z) \, dw_{\lambda}^+(z).
\end{align*}
\end{lem}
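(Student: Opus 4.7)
The plan is to treat the four assertions in order, with one underlying ingredient: explicit pointwise bounds on $\partial_t^K \delta_{\eta,M,x} \G_t^{\lambda,\eta,+}(x,z)$ that are locally uniform in $(x,t) \in \R \times \mathbb{R}_+$ and exhibit sufficient decay in $z$. Since $\G_t^{\lambda,\eta,+}(x,z) = (xz)^\eta W_t^{\lambda+\eta}(x,z)$ and $I_\nu(u)/u^\nu$ is an entire function of $u^2$ for $\nu > -1$, the kernel is $C^\infty$ in $(x,t)$ on $\R \times \mathbb{R}_+$ for every fixed $z \in \R$. Standard asymptotics of $I_\nu$ at $0$ and $\infty$ (see e.g.\ \cite{CaSz}) yield, for any $K \in \N$ and $M \in \N^n$, estimates of the form
$$\big|\partial_t^K \delta_{\eta,M,x}\G_t^{\lambda,\eta,+}(x,z)\big| \lesssim C_{K,M}(x,t)\,\exp\!\big(-c|x-z|^2/t\big),$$
with $C_{K,M}(x,t)$ locally bounded on $\R \times \mathbb{R}_+$. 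For $f \in L^p(\R,Udw_\lambda^+)$ with $U \in A_p^{\lambda,+}$, this bound is integrable against $|f(z)|\,dw_\lambda^+(z)$ by H\"older and the local integrability of $U^{-p'/p}$ guaranteed by the $A_p^{\lambda,+}$ condition. Dominated convergence then yields both the $C^\infty$ regularity of $\mathbb{W}_t^{\lambda,\eta,+}f$ and the differentiation-under-the-integral formula of the second assertion.

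For the spectral formula in the third assertion, let $f \in L^2(dw_\lambda^+)$, so $f^\eta \in L^2(dw_\lambda)_\eta$. Since the Dunkl transform preserves $\eta$-symmetry, $D_\lambda f^\eta$ is $\eta$-symmetric in $z$. The identity $\overline{\psi_z^\lambda(x)} = \psi_{-z}^\lambda(x)$, a direct consequence of \eqref{defpsi} and the evenness of $\varphi_{z_j}^{\lambda_j}$, gives
$$\check{D}_\lambda g(x) = \frac{1}{2^n}\int_{\RR} \psi_z^\lambda(x)\,g(z)\,dw_\lambda(z).$$
From \eqref{defpsi} one reads off that the $\eta'$-symmetric (in $z$) component of $\psi_z^\lambda(x)$ is exactly $(ixz)^{\eta'}\varphi_z^{\lambda+\eta'}(x)$. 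Consequently, for $x \in \R$ only the $\eta$-symmetric (in $z$) component of $\psi_z^\lambda(x)$ contributes when paired with $D_\lambda f^\eta$; the resulting product is symmetric in $z$, so the integral over $\RR$ equals $2^n$ times the integral over $\R$, cancelling the prefactor $2^{-n}$. This produces the claimed representation of $\mathbb{W}_t^{\lambda,\eta,+}f$. Differentiation under the integral is then justified by Lemma~\ref{lem:3.4BCN}, which converts $\delta_{\eta,M,x}[(ixz)^\eta \varphi_z^{\lambda+\eta}(x)]$ into a polynomial in $z$ times $(ixz)^\epsilon\varphi_z^{\lambda+\epsilon}(x)$; together with \eqref{estvarphi} and the Gaussian factor $\partial_t^K e^{-t|z|^2}$, the resulting integrand is dominated, locally uniformly in $(x,t)$, by a function in $L^2(dw_\lambda^+)$ of $z$, so Cauchy--Schwarz against $D_\lambda f^\eta \in L^2(dw_\lambda^+)$ closes the argument.

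For the final assertion, if $f \in C_c^\infty(\R)$ then $f^\eta \in L^1(dw_\lambda)\cap L^2(dw_\lambda)$ and the integral defining $D_\lambda f^\eta(z)$, and similarly $\check{D}_\lambda f^\eta(z)$, converges absolutely and uniformly on compact sets in $z$; differentiating under the integral via the power-series representations of the factors $\varphi_{z_j}^{\lambda_j}(x_j)$ in $z$ produces $D_\lambda f^\eta, \check{D}_\lambda f^\eta \in C^\infty(\RR)$, hence the restrictions lie in $C^\infty(\R)$. The claimed formula for $\delta_{\eta,M}(\check{D}_\lambda f^\eta)^+$ follows from the same $\eta$-symmetric decomposition of $\psi_z^\lambda(x)$ as above, applied to the defining integral of $\check{D}_\lambda$, together with Lemma~\ref{lem:3.4BCN} to handle $\delta_{\eta,M,x}[(ixz)^\eta \varphi_z^{\lambda+\eta}(x)]$. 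I expect the main technical obstacle to be the pointwise kernel bound used in the first step, since $\delta_{\eta,M,x}$ combines ordinary differentiation with the singular term $2\lambda_j/x_j$, and the factor $(xz)^\eta$ interacts nontrivially with the Bessel-function structure of $W_t^{\lambda+\eta}(x,z)$; fortunately, these estimates are essentially the subject of Section~\ref{sec:kerest} and can be invoked here.
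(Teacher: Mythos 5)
Your proposal is correct and follows essentially the same route as the paper's (very terse) proof, which simply invokes the argument of \cite[Lemma 3.5]{BCN}: locally uniform Gaussian-decay bounds on $\partial_t^K\delta_{\eta,M,x}\G_t^{\lambda,\eta,+}(x,z)$ for $(x,t)$ in compacta (this is exactly the content of Lemma~\ref{lem:EST}), H\"older plus the $A_p^{\lambda,+}$ condition and dominated convergence for the first two assertions, and the $\eta$-symmetric decomposition of $\psi_z^\lambda$ combined with Lemma~\ref{lem:3.4BCN} and \eqref{estvarphi} for the two spectral formulas. The only minor imprecision is that \emph{local} integrability of $U^{-p'/p}$ alone does not suffice against a Gaussian; what one actually needs, and what the $A_p^{\lambda,+}$ condition applied to large balls provides, is that $\int_{B(0,R)}U^{-p'/p}\,dw_\lambda^+$ grows at most polynomially in $R$.
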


\begin{proof}
    To prove the first two statements we can proceed as in the proof of
\cite[Lemma 3.5]{BCN}, see also the comments in \cite[Section 2]{CaSz}.
The crucial facts needed in the reasoning are the bounds
$$ \Big|\partial_t^K \delta_{\eta,M,x} \G_t^{\lambda,\eta,+} (x,z)\Big|
        +
     \Big|\partial_t^K e^{-t|z|^2} \delta_{\eta,M,x}\Big[  (xz)^\eta \varphi_z^{\lambda+\eta}(x) \Big]\Big|
        \lesssim e^{-c|z|^2}, \quad z \in \R, \ x \in E, \ t \in F,$$
where $E$ and $F$ are fixed compact subsets of $\R$ and $\mathbb{R}_+$, respectively, and $c>0$ is some constant depending on $E$ and $F$.
The above estimate can be verified by means of Lemma~\ref{lem:EST} below and
Lemma~\ref{lem:3.4BCN} together with the bound \eqref{estvarphi}, respectively.

Finally, the last statement is a consequence of Lemma~\ref{lem:3.4BCN}, the estimate \eqref{estvarphi} and the dominated convergence theorem.
\end{proof}

Now we are ready to give
the proof of $L^2(dw_\lambda^+)$-boundedness of all the investigated Bessel-type operators. This, however, is trivial in the case of multipliers of Laplace and Laplace-Stieltjes transform type, because in both cases
$\mathfrak{m} \in L^\infty(\mathbb{R}^n)$.
Moreover, the $L^2(dw_\lambda^+)$-boundedness of the auxiliary Lusin area integrals
is an immediate consequence of the $L^2(dw_\lambda^+)$-boundedness of the Bessel-type $g$-functions.
Indeed, from the general theory of spaces of homogeneous type we know that
\[
\| S^{\lambda,\eta,+}_{K,M} (f) \|_{L^2(dw_\lambda^+)}
        \simeq \| g^{\lambda,\eta,+}_{K,M} (f) \|_{L^2(dw_\lambda^+)},
\]
which in our context can also be justified with the aid of Lemma \ref{lem:intXi} below.
Thus, it suffices to consider the remaining operators.

%%%%%%%%%%%%%%%%%%%%%%%%%%%%%%%%%%%%%%%%%%%%%%%%%%%%%%%%%%%%%%%%%%%%%%%%%%%%%%%%%%%%%%%%%
\begin{proof}[Proof of Proposition \ref{pro:L2b}; the case of $\mathbb{W}_{*}^{\lambda,\eta,+}$]
    We observe that
    $$\mathbb{W}_{*}^{\lambda,\eta,+}(f)(x)
= x^\eta \mathbb{W}_{*}^{\lambda + \eta, \eta_0,+}\Big(\frac{f}{y^\eta}\Big)(x)
        = x^\eta W_{*}^{\lambda+\eta}\Big(\frac{f}{y^\eta}\Big)(x), \quad x \in \R,$$
where $\eta_0 =(0,\ldots,0)$ and $W_{*}^{\lambda}$ is the heat-Bessel semigroup maximal operator, see the comments following the statement of Proposition~\ref{pro:kerestLAIP}.
    Since $W_{*}^{\lambda}$ is bounded on $L^2(dw^+_{\lambda})$, see \cite[Section 4.1]{BCN} and \cite[Section 2]{CaSz},
    we easily deduce that
    \begin{align*}
        \left\| \mathbb{W}_{*}^{\lambda,\eta,+}(f) \right\|_{L^2(dw^+_{\lambda})}
            = \Big\| W_{*}^{\lambda+\eta}\Big(\frac{f}{y^\eta}\Big) \Big\|_{L^2(dw^+_{\lambda+\eta})}
            \lesssim \Big\| \frac{f}{y^\eta} \Big\|_{L^2(dw^+_{\lambda+\eta})}
            = \| f \|_{L^2(dw^+_{\lambda})}.
    \end{align*}
\end{proof}

%%%%%%%%%%%%%%%%%%%%%%%%%%%%%%%%%%%%%%%%%%%%%%%%%%%%%%%%%%%%%%%%%%%%%%%%%%%%%%%%%%%%%%%%%
\begin{proof}[Proof of Proposition \ref{pro:L2b}; the case of $g^{\lambda,\eta,+}_{K,M}$]
    Let $f \in L^2(dw_\lambda^+)$. By  Lemmas~\ref{lem:3.5BCN} and \ref{lem:3.4BCN} we can write, for every $x \in \R$ and $t>0$,
    \begin{align*}
        \Big| \partial_t^K \delta_{\eta,M,x} \mathbb{W}_t^{\lambda,\eta,+}f(x) \Big|
=& \Big|
\int_{\R} |z|^{2K} e^{-t|z|^2} z^M (xz)^\epsilon \varphi_z^{\lambda +\epsilon}(x) D_\lambda f^\eta (z) \, dw_{\lambda}^+(z)  \Big|
\\
=& \,
2^{-n} \Big| \int_{\RR} |z|^{2K} e^{-t|z|^2} z^M \psi_z^{\lambda}(x) D_\lambda f^\eta (z) \, dw_{\lambda}(z) \Big| \\
=&
\Big|\check{D}_\lambda
                \Big[ |z|^{2K} e^{-t|z|^2} z^{M}
                    D_\lambda f^\eta (z)\Big](x)\Big|,
    \end{align*}
where the second equality holds by symmetry reasons.
Since the function in square brackets above is $\epsilon$-symmetric and $D_{\lambda}$ is an isometry in $L^2(dw_\lambda)$, we obtain
    \begin{align*}
        \left\| g^{\lambda,\eta,+}_{K,M}(f) \right\|_{L^2(dw^+_{\lambda})}^2
         = &
\, 2^{-n} \int_{0}^\infty \int_{\RR}
\Big|\check{D}_\lambda
                \Big[ |z|^{2K} e^{-t|z|^2} z^{M}
                    D_\lambda f^\eta (z)\Big](x)\Big|^2
t^{2K+|M|-1}  dt \, dw_{\lambda}(x) \\
\lesssim &
\int_{\RR} |z|^{4K+2|M|} \big| D_{\lambda} f^\eta(z) \big|^2
\int_0^\infty e^{-2t|z|^2} t^{2K+|M|-1} dt \, dw_{\lambda}(z) \\
        \simeq & \, \| D_{\lambda} f^\eta \|_{L^2(dw_{\lambda})}^2
        = \|f^\eta\|_{L^2(dw_\lambda)}^2
        \simeq \|f\|_{L^2(dw_\lambda^+)}^2,
    \end{align*}
which finishes the reasoning.
\end{proof}

%%%%%%%%%%%%%%%%%%%%%%%%%%%%%%%%%%%%%%%%%%%%%%%%%%%%%%%%%%%%%%%%%%%%%%%%%%%%%%%%%%%%%%%%%
\begin{proof}[Proof of Proposition \ref{pro:L2b}; the case of $R_M^{\lambda,\eta,+}$]
     Take $f \in  C^{\lambda,\eta,+}$.
Observe that $( |z|^{-|M|} D_\lambda f^\eta (z))^+ \in C_c^\infty(\R)$ and thus an application of
Lemma~\ref{lem:3.5BCN} together with Lemma~\ref{lem:3.4BCN} gives
\begin{align*}
\big| R_M^{\lambda,\eta,+} f (x) \big|
        &= \Big| \int_{\R} |z|^{-|M|} z^M (xz)^{\epsilon} \varphi_z^{\lambda+\epsilon}(x)  D_{\lambda} f^\eta(z) dw_{\lambda}^+(z) \Big| \\
&=
\, 2^{-n} \Big| \int_{\RR} |z|^{-|M|} z^M \psi_z^{\lambda}(x)  D_{\lambda} f^\eta(z) dw_{\lambda}(z) \Big| \\
&=
\Big|\check{D}_\lambda
                \Big[ |z|^{-|M|} z^{M}
                    D_\lambda f^\eta (z)\Big](x)\Big|.
\end{align*}
Since the last expression, as a function on $\RR$, is symmetric, using the $L^2(dw_\lambda)$-isometry of $D_\lambda$ we get
     \begin{align*}
        \big\| R_M^{\lambda,\eta,+}(f) \big\|_{L^2(dw^+_{\lambda})}
          \simeq \left\|  |z|^{-|M|} z^{M}
         D_{\lambda} f^\eta(z) \right\|_{L^2(dw_{\lambda})}
         \leq \| D_{\lambda} f^\eta \|_{L^2(dw_{\lambda})}
        \simeq \|f\|_{L^2(dw_\lambda^+)}.
     \end{align*}
     Thus $R_M^{\lambda,\eta,+}$ is bounded from $C^{\lambda,\eta,+}$ into $L^2(dw_\lambda^+)$.
Next, we ensure that $C^{\lambda,\eta,+}$ is a dense subspace of $L^2(dw_\lambda^+)$.
It is not hard to verify that
\[
C^{\lambda,\eta,+} =
\check{D}_{\lambda}\big( \big( C_c^\infty(\R) \big)^\eta \big)^+,
\]
where $\big(C_c^\infty(\R) \big)^\eta = \{ f^\eta : f \in C_c^\infty(\R) \}$.
Since $C_c^\infty(\R)$ is dense in $L^2(dw_{\lambda}^+)$, it follows that $\big( C_c^\infty(\R) \big)^\eta$ is dense in $L^2(dw_\lambda)_\eta$ 
and the conclusion follows.
\end{proof}

%%%%%%%%%%%%%%%%%%%%%%%%%%%%%%%%%%%%%%%%%%%%%%%%%%%%%%%%%%%%%%%%%%%%%%%%%%%%%%%%%%%%
\section{Kernel estimates}\label{sec:kerest}
%%%%%%%%%%%%%%%%%%%%%%%%%%%%%%%%%%%%%%%%%%%%%%%%%%%%%%%%%%%%%%%%%%%%%%%%%%%%%%%%%%%%%
In this section we gather various technical facts, which finally allow us to obtain standard estimates for all the kernels under consideration.
Our method is based on the technique used in the Bessel context in the paper \cite{BCN} for the restricted range of
$\lambda \in [0,\infty)^n$ and in \cite{CaSz} for the full range of $\lambda \in (-1\slash 2,\infty)^n$.

Given $\eta \in \{0,1 \}^n$, the Bessel-type heat kernel can be expressed as, see \cite[(3.3)]{CaSz},
\begin{equation}\label{Bhk}
\G_t^{\lambda,\eta,+}(x,y)= \sum_{\eps \in \{0,1\}^n} C_{\lambda + \eta,\eps} \,
    t^{-n\slash 2-|\lambda| - |\eta| -2|\eps|} (xy)^{2\eps + \eta} \int\exp\left(-\frac{q(x,y,s)}{4t}\right) \,
d\Omega_{\lambda+\eta+ \mathbf{1} + \eps }(s),
\end{equation}
where $C_{\lambda,\eps}=(2\lambda+\mathbf{1})^{\mathbf{1}-\eps} \,2^{-n\slash 2 - |\lambda| - 2|\eps|}$, and the function $q(x,y,s)$ is defined as
$$
q(x,y,s) = |x|^2 + |y|^2 + 2 \sum_{j=1}^n x_j y_j s_j, \qquad x,y \in \mathbb{R}_+^n, \quad s \in [-1,1]^n.
$$
The measures $\Omega_\nu$, $\nu \in (0,\infty)^n$, appearing in \eqref{Bhk} are products of one-dimensional
measures, $\Omega_\nu = \bigotimes_{j=1}^{n} \Omega_{\nu_j}$, where $\Omega_{\nu_j}$ is defined on the interval $[-1,1]$ by the density
$$
 d \Omega_{\nu_j}(s_j)
    = \frac{ (1-s_j^2)^{\nu_j - 1} ds_j } {\sqrt{\pi} 2^{\nu_j - 1\slash 2}
\Gamma{ (\nu_{j}) } }.
$$
To shorten notation, we will write $\q$ instead of $q(x,y,s)$ when no confusion can arise.

To estimate kernels defined via $\G_t^{\lambda,\eta,+}(x,y)$ we will frequently use the following.

\begin{lem}\label{lem:EST}
    Let $\lambda \in (-1/2,\infty)^n$, $\eta \in \{ 0, 1 \}^n$, $\ell, r, M \in \N^n$ and $K \in \N$. Then
    \begin{align*}
        & \big| \partial^K_t \partial_x^\ell \partial_y^r \delta_{\eta,M,x} \G_t^{\lambda,\eta,+}(x,y)\big| \\
        & \quad \lesssim \sum_{\substack{ \eps, \zeta, \rho \in \{0,1\}^n \\
\alpha, \beta  \in \{0,1,2\}^n  }} x^{2\eps - \alpha\eps + \eta - \zeta\eta} y^{2\eps-\beta \eps + \eta - \rho \eta}
    t^{-(n/2+|\lambda|+ |\eta| + 2|\eps|)-K-(|M| + |\ell| +|r|)/2 + (|\alpha \eps| + |\zeta \eta| + |\beta \eps| + |\rho \eta|)/2} \\
& \qquad \times
                \int \exp\left(-\frac{\q}{8t}\right)
d\Omega_{\lambda+\eta+ \mathbf{1} + \eps }(s),
    \end{align*}
    uniformly in $x,y \in \R$ and $t>0$.
\end{lem}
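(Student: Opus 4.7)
The plan is to start from the explicit representation \eqref{Bhk}, differentiate term by term inside the finite sum over $\eps$, and commute the derivatives past the integral against $d\Omega_{\lambda+\eta+\mathbf{1}+\eps}(s)$ (differentiation under the integral is routinely justified via dominated convergence, since every resulting integrand will be bounded by a Gaussian in $\q/t$). For fixed $\eps \in \{0,1\}^n$, the integrand to be differentiated is
\[
t^{-A_\eps}\,(xy)^{2\eps+\eta}\,\exp(-\q/(4t)), \qquad A_\eps = n/2 + |\lambda| + |\eta| + 2|\eps|,
\]
and Leibniz expands $\partial_t^K \partial_x^\ell \partial_y^r \delta_{\eta,M,x}$ applied to this into a finite linear combination of terms in which each spatial derivative either hits the monomial $(xy)^{2\eps+\eta}$, lowering an $x$- or $y$-power with no $t$-cost, or hits $\exp(-\q/(4t))$, producing a factor $(x_j \pm y_j s_j)/t$. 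The indices $\alpha,\beta,\zeta,\rho$ in the claim record precisely how many spatial derivatives chose the former option, split between the $2\eps$ and $\eta$ components of the exponent in $x$ (via $\alpha,\zeta$) and in $y$ (via $\beta,\rho$); this explains the ranges $\alpha,\beta \in \{0,1,2\}^n$ and $\zeta,\rho \in \{0,1\}^n$.

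The quantitative heart of the argument is the absorption inequality $u^a e^{-u/4} \lesssim e^{-u/8}$ for $u\ge 0$ (with implicit constant depending on $a$), together with the pointwise bound
\[
(x_j+y_j s_j)^2 \;\le\; x_j^2 + 2x_j y_j s_j + y_j^2 \;\le\; \q, \qquad x,y\in \R,\ s\in [-1,1]^n,
\]
which together yield $|x_j+y_js_j|^k\, t^{-k}\, e^{-\q/(4t)} \lesssim t^{-k/2} e^{-\q/(8t)}$. Thus each derivative of the exponential contributes $t^{-1/2}$ in size. Time derivatives are handled analogously: $\partial_t$ acting on $t^{-A_\eps}$ produces $1/t$ up to a constant, whereas $\partial_t$ acting on the exponential produces $\q/(4t^2)$, which after the same absorption also gives $1/t$ times $e^{-\q/(8t)}$. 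Iterating gives $t^{-K}$ from $\partial_t^K$.

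The one real subtlety is $\delta_{\eta,M,x}$, which involves $\delta_j^* = \partial_{x_j} + 2\lambda_j/x_j$. The composition order in the definition is chosen so that $\delta_{\eta,M,x}$ preserves smoothness on $\eta$-symmetric functions; using the identities $\delta_j^*\delta_j = \partial_{x_j}^2 + (2\lambda_j/x_j)\partial_{x_j}$ and $\delta_j\delta_j^* = \partial_{x_j}^2 + (2\lambda_j/x_j)\partial_{x_j} - 2\lambda_j/x_j^2$, an induction on $|M|$ expresses $\delta_{\eta,M,x} = \sum_{k\le |M|} r_k(x)\,\partial_x^k$ with coefficients $r_k(x)$ that are polynomials in the $1/x_j$. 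When this is applied to the $\eps$-summand of $\G_t^{\lambda,\eta,+}$, each negative power of $x_j$ is cancelled by an available factor from $x_j^{2\eps_j+\eta_j}$, so the net effect of every ``derivative-or-$1/x_j$'' action is either to reduce an $x_j$-power by one or, if a $\partial_{x_j}$ hits the exponential, to produce an absorption factor $t^{-1/2}$. Combining everything — $t^{-K}$ from time derivatives and $t^{-(|\ell|+|r|+|M|)/2 + (|\alpha\eps|+|\zeta\eta|+|\beta\eps|+|\rho\eta|)/2}$ from the spatial derivatives, together with the residual Gaussian $e^{-\q/(8t)}$ — and then summing over $\eps$ and over the allowed configurations $\alpha,\zeta,\beta,\rho$ yields the stated bound. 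I expect the clean bookkeeping of the $1/x_j$ cancellations inside $\delta_{\eta,M,x}$ to be the main obstacle, since one must simultaneously track which compositions of $\delta_j,\delta_j^*$ produced which reductions; the inductive expansion above is meant to sidestep this by exposing all terms from the outset.
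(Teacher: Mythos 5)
There is a genuine gap, and it sits exactly where you predicted the main obstacle would be: the bookkeeping of the $1/x_j$ factors produced by expanding $\delta_{\eta,M,x}$. Your claim that ``each negative power of $x_j$ is cancelled by an available factor from $x_j^{2\eps_j+\eta_j}$'' fails for the summands with $\eps_j=\eta_j=0$, where the monomial prefactor is $x_j^0=1$ and supplies nothing. Already for $n=1$, $\eta=0$, $M=4$ the expansion of $(\delta^*\delta)^2=(\partial_x^2+\tfrac{2\lambda}{x}\partial_x)^2$ contains the term $-\tfrac{4\lambda^2}{x^3}\partial_x$; applied to the $\eps=0$ summand $t^{-A_0}\exp(-\q/(4t))$ this produces $\tfrac{x+ys}{x^3}$ times a Gaussian, which blows up as $x\to 0^+$ and certainly is not dominated by the claimed right-hand side (whose $x$-exponents $2\eps-\alpha\eps+\eta-\zeta\eta$ are all nonnegative). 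The operator $\delta_{\eta,M,x}$ does map the $\eta$-symmetric summand to a smooth function, but the requisite cancellation occurs \emph{between} the different terms of your expansion $\sum_k r_k(x)\partial_x^k$ (it ultimately rests on the evenness/oddness of the function being differentiated, e.g. $f''-f'/x=O(x^2)$ for $f$ even), not term by term against the prefactor. So the proposed inductive expansion does not sidestep the difficulty; it destroys the structure that makes the estimate true.

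The paper resolves this with a different reduction that you should adopt: since $\partial_t\G_t^\lambda=-\Delta_{\lambda,x}\G_t^\lambda=(T_x^\lambda)^2\G_t^\lambda$ and both $\partial_t\G_t^{\lambda,\eta,+}$ and $\delta_{\eta,2,x}\G_t^{\lambda,\eta,+}$ are the $\eta$-symmetric components of the same object, one has $\delta_{\eta,2,x}\G_t^{\lambda,\eta,+}=\partial_t\G_t^{\lambda,\eta,+}$, and iterating,
\begin{equation*}
\delta_{\eta,M,x}\G_t^{\lambda,\eta,+}(x,y)=\partial_t^{\lfloor M/2\rfloor}\Big(\partial_x^{\overline{M}}+\frac{2\lambda\eta\overline{M}}{x}\Big)\G_t^{\lambda,\eta,+}(x,y)
\end{equation*}
in dimension one. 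This converts all but at most one of the $\delta$-derivatives into time derivatives (each costing $t^{-1}$, i.e. $t^{-1/2}$ per unit of $M$, as required), and the single surviving $1/x$ occurs only when $\eta=\overline{M}=1$, in which case the prefactor $x^{2\eps+1}$ genuinely does supply the positive power needed to cancel it. From that point on, your Leibniz expansion, the absorption $u^a e^{-u/4}\lesssim e^{-u/8}$ together with $(x_j+y_js_j)^2\le\q$, and the tensor-product argument for general $n$ all match the paper's proof and are correct.
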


\begin{proof}
We first deal with the one-dimensional situation ($n=1$). Since the heat-Dunkl semigroup $\{\mathbb{W}_t^\lambda\}_{ t>0}$ satisfies the heat equation
$\partial_t \mathbb{W}_t^\lambda = - \Delta_{\lambda} \mathbb{W}_t^\lambda$, it can easily be verified that its kernel $\mathbb{G}_t^\lambda (x,y)$ also satisfies this equation with respect to $x$ variable, i.e.
\[
\partial_t \mathbb{G}_t^\lambda (x,y)
= - \Delta_{\lambda,x} \mathbb{G}_t^\lambda (x,y)
= (T_{x}^\lambda)^2 \mathbb{G}_t^\lambda (x,y).
\]
Observe that for each $\eta =0,1$ the functions
$\partial_t \mathbb{G}_t^{\lambda,\eta,+} (x,y)$ and
$\delta_{\eta,2,x} \mathbb{G}_t^{\lambda,\eta,+} (x,y)$ are $\eta$-symmetric with respect to $x$. Thus, in view of the above identity, they are both $\eta$-symmetric component of $2^n \partial_t \mathbb{G}_t^\lambda (x,y)$ and hence they are equal.
Iterating this identity we obtain
\begin{equation*}
\delta_{\eta,M,x} \mathbb{G}_t^{\lambda,\eta,+} (x,y)
    =
    \partial_t^{\lfloor M/2 \rfloor}
    \left( \partial_x^{\overline{M}} + \frac{2\lambda \eta \overline{M}}{x} \right)
    \mathbb{G}_t^{\lambda,\eta,+} (x,y), \qquad x,y,t>0, \quad M \in \N, \quad \eta=0,1.
\end{equation*}
Combining this with the representation formula \eqref{Bhk} gives
\begin{align*}
        & \left|\partial_t^{K}\partial_{x}^{\ell} \partial_{y}^{r} \delta_{\eta,M,x} \G_t^{\lambda,\eta,+}(x,y) \right|
               = \left|\partial_t^{K+\lfloor M/2 \rfloor}\partial_{x}^{\ell} \partial_{y}^{r} \left( \partial_{x}^{\overline{M}} + \frac{2\lambda \eta \overline{M}}{x} \right)  \G_t^{\lambda,\eta,+}(x,y) \right| \\
            & \qquad \lesssim \sum_{\eps = 0,1} \int \Big\{
                    \left| \partial_t^{K+\lfloor M/2 \rfloor}
\partial_{x}^{\ell+ \overline{M}} \partial_{y}^{r} \left[ t^{-(1/2+\lambda+\eta+2\eps)} (xy)^{2\eps+\eta} \exp\left(-\frac{\q}{4t}\right)\right] \right|   \\
        & \qquad \qquad + \chi_{ \{ \eta = \overline{M}=1 \}}
\left| \partial_t^{K+\lfloor M/2 \rfloor}\partial_{x}^{\ell} \partial_{y}^{r}
\left[ t^{-(1/2+\lambda + 1 +2\eps)} x^{2\eps}y^{2\eps+1} \exp\left(-\frac{\q}{4t}\right)\right] \right|
                \Big\} d \Omega_{\lambda+\eta+1+\eps}(s).
\end{align*}
Proceeding in a similar way as in the proof of \cite[Lemma 3.2]{CaSz}, we obtain
\begin{align*}
& \left| \partial_t^W \partial_x^L \partial_y^R
\left[ t^S x^{2\eps +\eta_1} y^{2\eps +\eta_2} \exp\left(-\frac{\q}{4t}\right) \right]\right| \\
& \qquad \lesssim
\sum_{\substack{ \zeta, \rho = 0,1 \\
\alpha, \beta  = 0,1,2  }} x^{2\eps - \alpha\eps + \eta_1 - \zeta\eta_1} y^{2\eps-\beta \eps + \eta_2 - \rho \eta_2}
    t^{S-W-(L+R)/2 + (\alpha \eps + \zeta \eta_1 + \beta \eps + \rho \eta_2)/2}
                \exp\left(-\frac{\q}{8t}\right),
\end{align*}
uniformly in $x,y,t>0$; here $W,L,R \in \N$, $S\in \mathbb{R}$ and
$\eps,\eta_1,\eta_2 = 0,1$ are fixed.
Using this estimate we see that
\begin{align*}
        &  \left| \partial_t^{K+\lfloor M/2 \rfloor}
\partial_{x}^{\ell+ \overline{M}} \partial_{y}^{r} \left[ t^{-(1/2+\lambda+\eta+2\eps)} (xy)^{2\eps+\eta} \exp\left(-\frac{\q}{4t}\right)\right] \right| \\
        & \qquad \lesssim \sum_{\substack{ \zeta, \rho = 0,1 \\
\alpha, \beta  = 0,1,2  }} x^{2\eps - \alpha\eps + \eta - \zeta\eta} y^{2\eps-\beta \eps + \eta - \rho \eta}
t^{-(1/2+\lambda+\eta+2\eps)-K -
(M+ \ell + r)/2 +(\alpha \eps + \zeta\eta + \beta \eps + \rho \eta)/2} 
\exp\left(-\frac{\q}{8t}\right),
    \end{align*}
    and
    \begin{align*}
        & \chi_{ \{ \eta = \overline{M}=1 \}}
\left| \partial_t^{K+\lfloor M/2 \rfloor}\partial_{x}^{\ell} \partial_{y}^{r}
\left[ t^{-(1/2+\lambda + 1 +2\eps)} x^{2\eps}y^{2\eps+1} \exp\left(-\frac{\q}{4t}\right)\right] \right| \\
        & \qquad  \lesssim \chi_{ \{ \eta =1 \}}
\sum_{\substack{  \rho = 0,1 \\
\alpha, \beta  = 0,1,2  }} x^{2\eps - \alpha\eps}
y^{2\eps-\beta \eps + 1 - \rho }
t^{-(1/2+\lambda + 1 + 2\eps)-K - (M + \ell + r)/2 + (\alpha \eps + 1 +
\beta \eps + \rho)/2} \exp\left(-\frac{\q}{8t}\right).
\end{align*}
Since the bound of the first expression is dominating, we arrive at the desired conclusion.

Now suppose that $n$ is arbitrary. In view of the product structure of $\G_t^{\lambda,\eta,+}(x,y)$, an application of Leibniz' rule gives
    \begin{align*}
         \partial^K_t \partial_x^\ell \partial_y^r \delta_{\eta,M,x} \G_t^{\lambda,\eta,+}(x,y)
            = & \partial^K_t \left[ \prod_{j=1}^n  \partial_{x_j}^{\ell_j} \partial_{y_j}^{r_j} \delta_{j,\eta_j,M_j,x_j} \G_t^{\lambda_j,\eta_j,+}(x_j,y_j)\right]  \\
            = & \sum_{\substack{ k \in \{0, \dots, K\}^n \\  |k| = K }}
                               c_k \prod_{j=1}^n  \left[\partial_t^{k_j}\partial_{x_j}^{\ell_j} \partial_{y_j}^{r_j} \delta_{j,\eta_j,M_j,x_j} \G_t^{\lambda_j,\eta_j,+}(x_j,y_j) \right],
\end{align*}
which leads directly to the asserted bound.
\end{proof}

The next result constitutes an essence of our method. It provides
a link from the estimates emerging from the integral
representation of $\G_t^{\lambda,\eta,+}(x,y)$, see \eqref{Bhk}
and Lemma~\ref{lem:EST}, to the standard estimates related to the
space of homogeneous type $(\R,dw_\lambda^+, |\cdot|)$. It may be
proved in much the same way as \cite[Lemma 3.3]{CaSz}. The crucial
role in the proof plays \cite[Lemma 2.1]{NoSz}, see also
\cite[Lemma 3.1]{CaSz}.

\begin{lem}\label{lem:EST2}
    Assume that $\lambda \in (-1\slash 2 , \infty)^{n}$, $1 \le p \le \infty$, $W \in \mathbb{R}$, $C>0$,
    $\eps, \eta, \zeta, \rho \in \{ 0,1 \}^{n}$ and $\alpha,\beta \in \{ 0,1,2 \}^{n}$.
    Further, let $\tau \in \mathbb{N}^n$ be such that $\tau \le 2\eps - \alpha \eps + \eta - \zeta \eta$.
    Given $u \ge 0$, we consider the function $\Upsilon_u \colon \R \times \R \times \mathbb{R}_+ \to \mathbb{R}$ defined by
    \begin{align*}
        \Upsilon_{u}(x,y,t)
            = & x^{2\eps-\alpha \eps+\eta-\zeta \eta - \tau}
                \, y^{2\eps-\beta \eps+\eta-\rho \eta}
                \, t^{ -(n \slash 2 + |\lambda| +|\eta| + 2|\eps|) + (|\alpha \eps|+|\zeta \eta| + |\beta \eps|+ |\rho \eta| + |\tau|)\slash 2 - W\slash p-u\slash 2} \\
              & \times \int  \exp\Big(-\frac{C\q}{t} \Big) \, d\Omega_{\lambda+\eta+\mathbf{1}+\eps}(s),
    \end{align*}
    where $W \slash p = 0$ for $p=\infty$. Then $\Upsilon_u$ satisfies the integral estimate
    \begin{align*}
        \big\|\Upsilon_{u}\big(x,y,t\big)\big\|_{L^{p}(t^{W-1}dt)}
        \lesssim \frac{1}{|x-y|^{u}} \;
        \frac{1}{w^+_{\lambda}(B(x,|y-x|))},
    \end{align*}
    uniformly in $x,y \in \R$, $x\neq y$.
\end{lem}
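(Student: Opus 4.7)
The plan is to follow the strategy of \cite[Lemma 3.3]{CaSz}: convert the $L^p$-norm in $t$ into a pointwise estimate on $(x,y)$ via Minkowski's integral inequality, and then apply \cite[Lemma 2.1]{NoSz} to identify the resulting integral over $[-1,1]^n$ with a bound involving $|x-y|$ and $w_\lambda^+(B(x,|x-y|))$.

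More precisely, I would set $A := 2\eps - \alpha\eps + \eta - \zeta\eta - \tau$, $B := 2\eps - \beta\eps + \eta - \rho\eta$, and
\[
c := -\Big(\frac{n}{2} + |\lambda| + |\eta| + 2|\eps|\Big) + \frac{|\alpha\eps| + |\zeta\eta| + |\beta\eps| + |\rho\eta| + |\tau|}{2} - \frac{u}{2},
\]
so that the exponent of $t$ inside $\Upsilon_u$ equals $c - W/p$. Since $x$ and $y$ are fixed when integrating in $t$, one pulls $x^A y^B$ out of the norm and obtains
\[
\|\Upsilon_u(x,y,t)\|_{L^p(t^{W-1}dt)}^p = x^{pA} y^{pB} \int_0^\infty t^{pc-1} \Big(\int e^{-C\q/t}\, d\Omega_{\lambda+\eta+\mathbf{1}+\eps}(s)\Big)^p dt.
\]
A short combinatorial check, using $|\alpha\eps|, |\beta\eps| \le 2|\eps|$, $|\zeta\eta|, |\rho\eta| \le |\eta|$, $|\tau| \le 2|\eps| - |\alpha\eps| + |\eta| - |\zeta\eta|$ and the standing hypothesis $\lambda_j > -1/2$ (so that $|\lambda| > -n/2$), shows that $c \le -n/2 - |\lambda| - u/2 < 0$. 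I would then apply Minkowski's integral inequality to swap the $L^p(t^{pc-1}dt)$-norm with the $s$-integration, and evaluate the resulting inner $t$-integral by the substitution $r = pC\q/t$, which produces $\Gamma(-pc)(pC\q)^{pc}$. This yields
\[
\|\Upsilon_u(x,y,t)\|_{L^p(t^{W-1}dt)} \lesssim x^A y^B \int_{[-1,1]^n} \q^c \, d\Omega_{\lambda+\eta+\mathbf{1}+\eps}(s).
\]

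The second stage is to identify this pointwise expression with the target bound. Using $\q \ge |x-y|^2$, one extracts $\q^{-u/2} \le |x-y|^{-u}$, reducing matters to
\[
x^A y^B \int_{[-1,1]^n} \q^{c+u/2} \, d\Omega_{\lambda+\eta+\mathbf{1}+\eps}(s) \lesssim \frac{1}{w_\lambda^+(B(x,|x-y|))},
\]
in which the remaining exponent $c + u/2$ of $\q$ depends only on the intrinsic parameters. Exploiting the tensor-product structures of $\q = \sum_j(x_j^2 + y_j^2 + 2 x_j y_j s_j)$ and $d\Omega_{\lambda+\eta+\mathbf{1}+\eps} = \bigotimes_j d\Omega_{\lambda_j+\eta_j+1+\eps_j}$, together with the product expression $w_\lambda^+(B(x,r)) \simeq r^n \prod_j (x_j + r)^{2\lambda_j}$, the estimate is precisely of the form furnished by \cite[Lemma 2.1]{NoSz} (which in turn rests on the one-dimensional estimate \cite[Lemma 3.1]{CaSz}). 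The hypothesis $\tau \le 2\eps - \alpha\eps + \eta - \zeta\eta$ guarantees $A \ge 0$, and a parallel inspection gives $B \ge 0$; both nonnegativities are essential to match the hypotheses of the cited lemma, since the $x^A y^B$ factors must be absorbed on the appropriate side of the bound.

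The main obstacle is the bookkeeping in the second stage: one must verify, coordinate by coordinate, that the specific powers $A$, $B$ together with the exponent $c+u/2$ on $\q$ combine correctly under \cite[Lemma 2.1]{NoSz} to reproduce the Bessel ball-measure $w_\lambda^+(B(x,|x-y|))$. This is essentially an exercise in tracking multi-indices through the tensorized one-dimensional computation performed in \cite[Lemma 3.3]{CaSz}; once the matching is in place, the conclusion follows at once.
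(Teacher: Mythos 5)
Your proposal is correct and follows essentially the same route as the paper, which itself proves this lemma by reducing, via Minkowski's inequality and integration of the $t$-variable (yielding the factor $\q^c$ with $c<0$), to the $s$-integral estimate of \cite[Lemma 2.1]{NoSz} and \cite[Lemma 3.1]{CaSz}, exactly as in \cite[Lemma 3.3]{CaSz}. The only cosmetic point is that for $p=\infty$ the Gamma-function computation should be replaced by the elementary bound $\sup_{t>0} t^{c}e^{-C\q/t}\lesssim \q^{c}$, which gives the same outcome.
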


It should be noted that for every $\lambda \in (-1\slash 2,\infty)^n$
the $w^+_{\lambda}$ measure of the ball $B(x,R)$ or the cube centered at $x$ and of side lengths $2R$ can be described as
\begin{equation}\label{ball}
w^+_{\lambda}(B(x,R)) \simeq V_R^{\lambda,+}(x) \simeq R^n \prod_{j=1}^n (x_j + R)^{2\lambda_j},
 \qquad x \in \R, \quad R>0.
\end{equation}

The two lemmas below will be useful in justifying the smoothness estimates \eqref{sm1} and \eqref{sm2} when the corresponding kernel is not scalar valued
($\B \ne \mathbb{C}$).

\begin{lem}[{\cite[Lemma 4.3]{Sz1}}] \label{lem:theta}
Let $x,y,z\in\R$ and $s \in [-1,1]^n$. Then
$$
\frac{1}{4} q(x,y,s) \le q(z,y,s) \le 4 q(x,y,s),
$$
provided that $|x-y|>2|x-z|$. Similarly, if $|x-y|>2|y-z|$ then
$$
\frac{1}{4} q(x,y,s) \le q(x,z,s) \le 4 q(x,y,s).
$$
\end{lem}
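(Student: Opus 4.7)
The plan is to exploit a completion-of-squares representation of $q(x,y,s)$ that separates the dependence on $x$ from a nonnegative remainder. Setting $\tilde{y}_s = (y_1 s_1,\ldots,y_n s_n) \in \RR$ and $\sigma(y,s) = \sum_{j=1}^n y_j^2 (1-s_j^2) \ge 0$, one checks immediately that
\[
q(x,y,s) = |x + \tilde{y}_s|^2 + \sigma(y,s).
\]
The analogous identity $q(x,y,s) = |x-y|^2 + 2\sum_j x_j y_j (s_j+1)$ (valid because $x_j,y_j\ge 0$ and $s_j+1\ge 0$) gives the fundamental lower bound $q(x,y,s) \ge |x-y|^2$, which will convert the smallness hypothesis on $|x-z|$ into smallness relative to $\sqrt{q(x,y,s)}$.

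Next I would estimate the difference. Using the representation above,
\[
q(x,y,s) - q(z,y,s) = |x+\tilde{y}_s|^2 - |z+\tilde{y}_s|^2 = \bigl\langle x-z,\, (x+\tilde{y}_s) + (z+\tilde{y}_s)\bigr\rangle,
\]
so by Cauchy-Schwarz
\[
|q(x,y,s) - q(z,y,s)| \le |x-z| \bigl(|x+\tilde{y}_s| + |z+\tilde{y}_s|\bigr) \le |x-z| \bigl(\sqrt{q(x,y,s)} + \sqrt{q(z,y,s)}\bigr).
\]
Writing $A = \sqrt{q(x,y,s)}$, $B = \sqrt{q(z,y,s)}$, the hypothesis $|x-y| > 2|x-z|$ together with $A \ge |x-y|$ gives $|x-z| < A/2$, hence
\[
|A^2 - B^2| \le \tfrac{A}{2}(A+B).
\]

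A short case analysis then closes the estimate. If $B \ge A$, dividing through yields $B \le \sqrt{3}\,A < 2A$; if $B < A$, factoring $A^2 - B^2 = (A-B)(A+B)$ and dividing by $A+B$ yields $A - B \le A/2$, that is $B \ge A/2$. In either case $A/2 \le B \le 2A$, and squaring produces the announced bounds $\tfrac14 q(x,y,s) \le q(z,y,s) \le 4q(x,y,s)$. The second statement follows either by the identical argument (expand $q(x,y,s) - q(x,z,s)$ the same way, using now $\sqrt{q(x,y,s)} \ge |x-y| > 2|y-z|$) or, more quickly, by observing that $q$ is symmetric in its first two arguments.

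There is really no serious obstacle: the only thing to notice is the identity $q(x,y,s) = |x+\tilde{y}_s|^2 + \sigma(y,s)$, after which everything is a one-variable quadratic manipulation. I expect the whole proof to fit in a few lines.
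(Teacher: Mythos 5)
Your proof is correct: the identity $q(x,y,s)=|x+\tilde y_s|^2+\sigma(y,s)$ and the bound $q(x,y,s)\ge|x-y|^2$ both check out, and the quadratic manipulation with $A=\sqrt{q(x,y,s)}$, $B=\sqrt{q(z,y,s)}$ (note that in fact $|A-B|\le A/2$ follows directly after dividing by $A+B>0$, giving the slightly sharper $A/2\le B\le 3A/2$) yields the stated two-sided comparison, with the second inequality following from the symmetry $q(x,y,s)=q(y,x,s)$. The paper itself does not prove this lemma but merely cites \cite[Lemma 4.3]{Sz1}, so there is nothing to compare against in the text; your argument is a complete, self-contained, and elementary proof in the same spirit as the cited source.
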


\begin{lem}[{\cite[Lemma 4.5]{Sz1}}] \label{lem:double}
Let $\lambda \in (-1 \slash 2,\infty)^n$ and $\gamma \in \mathbb{R}$ be fixed. We have
\begin{align*}
\bigg( \frac{1}{|z-y|} \bigg)^{\gamma}
\frac{1}{w^+_{\lambda}(B(z,|z-y|))}
\simeq
\bigg( \frac{1}{|x-y|} \bigg)^{\gamma}
\frac{1}{w^+_{\lambda}(B(x,|x-y|))}
\end{align*}
on the set $\{(x,y,z) \in \R \times \R \times \R : |x-y|>2|x-z|\}$.
\end{lem}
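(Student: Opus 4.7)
The statement to prove is the doubling-type comparability: under the hypothesis $|x-y|>2|x-z|$, the weighted quantities $(1/|z-y|)^\gamma / w^+_\lambda(B(z,|z-y|))$ and $(1/|x-y|)^\gamma / w^+_\lambda(B(x,|x-y|))$ are mutually bounded by constants depending only on $\lambda$, $\gamma$ and $n$.

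The plan is to split the claim into two independent comparabilities, the distance factor and the ball-measure factor, and establish each by elementary triangle-inequality estimates together with the explicit formula \eqref{ball}. First I would handle the distance. Writing $R_x = |x-y|$ and $R_z=|z-y|$, the hypothesis and the triangle inequality give
\[
\tfrac{1}{2} R_x \le R_x - |x-z| \le R_z \le R_x + |x-z| \le \tfrac{3}{2} R_x,
\]
so that $R_z \simeq R_x$ with absolute constants, which immediately yields $(1/R_z)^\gamma \simeq (1/R_x)^\gamma$ for any fixed $\gamma \in \mathbb{R}$.

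Next I would show $w^+_\lambda(B(z,R_z)) \simeq w^+_\lambda(B(x,R_x))$. By \eqref{ball} it suffices to prove, for each $j=1,\dots,n$, the one-dimensional comparability $z_j + R_z \simeq x_j + R_x$ (with constants independent of $x,y,z$), since then the factors $R_z^n \simeq R_x^n$ and $(z_j+R_z)^{2\lambda_j}\simeq (x_j+R_x)^{2\lambda_j}$ (the latter holds for any sign of $\lambda_j$) combine to give the desired equivalence. For the upper bound, $|x_j - z_j|\le |x-z| \le \tfrac{1}{2}R_x$ and $R_z \le \tfrac{3}{2}R_x$ give $z_j + R_z \le x_j + 2R_x \le 2(x_j+R_x)$. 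For the lower bound I would split into two cases: if $x_j \ge R_x$, then $z_j \ge x_j - \tfrac{1}{2}R_x \ge \tfrac{1}{2}x_j$ so that $z_j + R_z \ge \tfrac{1}{2}x_j + \tfrac{1}{2}R_x = \tfrac{1}{2}(x_j+R_x)$; if $x_j < R_x$, then $x_j+R_x < 2R_x \le 4R_z$ so that $z_j + R_z \ge R_z \ge \tfrac{1}{4}(x_j+R_x)$. Either way $z_j + R_z \gtrsim x_j + R_x$.

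Combining the comparabilities $R_z \simeq R_x$ and $z_j+R_z \simeq x_j + R_x$ with \eqref{ball} finishes the proof. There is no genuine obstacle here: the whole statement is a purely geometric fact about the doubling space $(\R, dw^+_\lambda, |\cdot|)$, and the only mild subtlety is that $\lambda_j$ may be negative, which is why the two-sided comparability $z_j+R_z\simeq x_j+R_x$ (rather than a one-sided bound) is needed before raising to the power $2\lambda_j$.
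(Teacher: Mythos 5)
Your proof is correct. The paper itself does not prove this lemma: it simply cites \cite[Lemma 4.5]{Sz1} and remarks that the same arguments extend from $\lambda\in[0,\infty)^n$ to the full range $\lambda\in(-1/2,\infty)^n$. Your argument supplies exactly the elementary geometric content behind that citation: the triangle inequality gives $|z-y|\simeq|x-y|$ with ratio in $[1/2,3/2]$, hence the comparability of the distance factors for any fixed real $\gamma$; and the coordinatewise two-sided bound $z_j+|z-y|\simeq x_j+|x-y|$ (your case split on $x_j\ge|x-y|$ versus $x_j<|x-y|$ is the right way to get the lower bound, using $z_j\ge 0$) combined with \eqref{ball} handles the measure factors. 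You correctly identify the one point where negativity of $\lambda_j$ matters, namely that a two-sided comparability of $x_j+|x-y|$ and $z_j+|z-y|$ is needed before raising to the power $2\lambda_j$; this is precisely why the extension beyond $\lambda\in[0,\infty)^n$ is harmless. No gaps.
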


To be precise, in \cite[Lemma 4.5]{Sz1} only the restricted range of $\lambda \in [0,\infty)^n$ was allowed.
However, the same arguments as those in \cite{Sz1} show the result in the general case.

The next lemmas will be useful when proving kernel estimates for the kernels associated with Lusin area integrals.

\begin{lem}[{\cite[Lemma 4.7]{Sz1}}] \label{lem:qz}
Let $x,y\in\R$, $z\in\RR$, $s\in[-1,1]^{n}$. Then
\begin{align*}
q(x+z,y,s) \geq \frac{1}{2}q(x,y,s)-|z|^{2}.
\end{align*}
\end{lem}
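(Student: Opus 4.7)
The plan is to prove the inequality by direct algebraic manipulation, completing the square after a single expansion. Recall $q(x,y,s) = |x|^2 + |y|^2 + 2\sum_{j=1}^n x_j y_j s_j$.

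First, I would expand
\[
q(x+z,y,s) = |x|^2 + 2\langle x,z\rangle + |z|^2 + |y|^2 + 2\sum_{j=1}^n x_j y_j s_j + 2\sum_{j=1}^n z_j y_j s_j,
\]
so that
\[
q(x+z,y,s) = q(x,y,s) + |z|^2 + 2\sum_{j=1}^n z_j (x_j + y_j s_j).
\]
Consequently, the claim $q(x+z,y,s) \geq \tfrac{1}{2}q(x,y,s) - |z|^2$ is equivalent to
\[
\tfrac{1}{2} q(x,y,s) + 2|z|^2 + 2\sum_{j=1}^n z_j(x_j + y_j s_j) \geq 0.
\]

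Next, I would exploit the elementary decomposition
\[
q(x,y,s) = \sum_{j=1}^n \bigl[(x_j + y_j s_j)^2 + y_j^2 (1 - s_j^2)\bigr],
\]
which, since $s_j \in [-1,1]$ forces $y_j^2(1-s_j^2) \geq 0$, yields the lower bound $q(x,y,s) \geq \sum_{j=1}^n (x_j + y_j s_j)^2$. Substituting this into the previous inequality, it suffices to show
\[
\tfrac{1}{2}\sum_{j=1}^n (x_j + y_j s_j)^2 + 2\sum_{j=1}^n z_j^2 + 2\sum_{j=1}^n z_j(x_j + y_j s_j) \geq 0.
\]

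Finally, I would complete the square term-by-term: for each $j$,
\[
\tfrac{1}{2}(x_j + y_j s_j)^2 + 2z_j^2 + 2z_j(x_j + y_j s_j) = \tfrac{1}{2}\bigl(x_j + y_j s_j + 2z_j\bigr)^2 \geq 0.
\]
Summing over $j$ gives the inequality. The argument is purely elementary; there is no real obstacle beyond spotting that the coefficients $\tfrac12$ and $2$ are exactly those needed to make the cross term $2 z_j(x_j + y_j s_j)$ fit into a perfect square, which in turn is what fixes the constants $\tfrac12$ and $1$ appearing in the statement.
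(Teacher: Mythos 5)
Your proof is correct: the expansion $q(x+z,y,s)=q(x,y,s)+|z|^2+2\sum_j z_j(x_j+y_js_j)$, the identity $q(x,y,s)=\sum_j\bigl[(x_j+y_js_j)^2+y_j^2(1-s_j^2)\bigr]$, and the completion of the square $\tfrac12 a^2+2z_j^2+2z_ja=\tfrac12(a+2z_j)^2$ all check out. The paper itself gives no proof, merely importing the lemma from \cite[Lemma 4.7]{Sz1}, and your elementary argument is exactly the standard verification one would expect there.
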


\begin{lem}\label{lem:intXi}
Let $\lambda \in (-1/2,\infty)^n$. Then
\[
\int_{|z|<\sqrt{t}} \Xi_{\lambda}(x,z,t) \chi_{ \{ x+z \in \R \} } \, dz
\simeq 1,
\]
uniformly in $x \in \R$ and $t>0$.
\end{lem}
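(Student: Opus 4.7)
The plan is to exploit the product structure of $\Xi_\lambda$ together with the fact that the Euclidean ball $\{|z|<\sqrt{t}\}$ is sandwiched between two axis-aligned cubes whose side lengths are comparable up to a dimensional constant. For the upper bound I would enlarge the domain of integration by replacing $\{|z|<\sqrt{t}\}$ with the cube $\{|z_j|<\sqrt{t} : j=1,\ldots,n\}$; for the lower bound I would shrink it to $\{|z_j|<\sqrt{t}/\sqrt{n} : j=1,\ldots,n\} \subset \{|z|<\sqrt{t}\}$. In either case the integral factors as a product of one-dimensional integrals, and $\Xi_\lambda(x,z,t)$ itself factors as $\prod_j (x_j+z_j)^{2\lambda_j}/V^{\lambda_j,+}_{\sqrt{t}}(x_j)$.

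After this reduction, each one-dimensional integral to be estimated is of the form
\[
\frac{1}{V^{\lambda_j,+}_{R}(x_j)}\int_{\max\{-x_j,-R'\}}^{R'} (x_j+z_j)^{2\lambda_j}\, dz_j,
\]
with $R'\in\{\sqrt{t},\sqrt{t}/\sqrt n\}$ and $R=\sqrt{t}$. Performing the substitution $u=x_j+z_j$ and using the explicit formula \eqref{Vexplicit}, the integral in the numerator is exactly $V^{\lambda_j,+}_{R'}(x_j)$. For the upper bound ($R'=R$) the ratio is $1$, yielding $\int_{|z|<\sqrt{t}}\Xi_\lambda(x,z,t)\chi_{\{x+z\in\R\}}\,dz\le 1$. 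For the lower bound ($R'=\sqrt{t}/\sqrt{n}$) one gets the ratio $V^{\lambda_j,+}_{\sqrt{t}/\sqrt{n}}(x_j)/V^{\lambda_j,+}_{\sqrt{t}}(x_j)$.

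The only remaining task is to check that this last ratio is bounded below by a constant depending only on $n$ and $\lambda_j$. This follows directly from the comparison \eqref{ball}, which gives $V^{\lambda_j,+}_{R'}(x_j)\simeq R'(x_j+R')^{2\lambda_j}$, so that for $R'\simeq R$ we have $V^{\lambda_j,+}_{R'}(x_j)\simeq V^{\lambda_j,+}_{R}(x_j)$ uniformly in $x_j\ge 0$. Taking the product over $j=1,\ldots,n$ finishes the proof of both bounds. There is no substantive obstacle here: the argument is entirely elementary once one recognizes the product/cube sandwich, and the only technical ingredient beyond integrating a power is the doubling-type estimate \eqref{ball}.
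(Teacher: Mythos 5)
Your argument is correct and is in essence the paper's: the paper simply observes that the integral equals $w^+_\lambda(B(x,\sqrt{t}))/V^{\lambda,+}_{\sqrt{t}}(x)$ (balls in $\R$ being intersections of Euclidean balls with $\R$) and invokes \eqref{ball}, whereas your cube sandwich merely re-derives the ball-versus-cube comparison already contained in \eqref{ball} before applying the same one-dimensional doubling estimate. Both routes rest on exactly the same ingredient, so there is nothing further to add.
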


\begin{proof}
Since
\[
\int_{|z|<\sqrt{t}} \Xi_{\lambda}(x,z,t) \chi_{ \{ x+z \in \R \} } \, dz
=
\frac{1}{V_{\sqrt{t}}^{\lambda,+}(x)} \int_{|z|<\sqrt{t}} (x+z)^{2\lambda}
\chi_{ \{ x+z \in \R \} } \, dz
=
\frac{w_{\lambda}^+(B(x,\sqrt{t}))}{V_{\sqrt{t}}^{\lambda,+}(x)},
\]
the conclusion is a straightforward consequence of
\eqref{ball}.
\end{proof}

The result below can be seen as an extension of \cite[Lemma 4.8]{Sz1}, which in our notation is valid only for $\lambda \in [0,\infty)^n$.
The crucial role in the proof plays the estimate
\begin{equation}\label{xiineq}
|x - y|^\xi \lesssim
|x^\xi - y^\xi|, \qquad x,y \ge 0,
\end{equation}
where $\xi \ge 1$ is fixed. This was also used in \cite[p.\,1540]{Sz1} but only with $\xi =2$.
Here the technical side demands more effort and seems to be unavoidable in the general case
$\lambda \in (-1/2,\infty)^n$.
\begin{lem}\label{lem:intdifXi}
Given $\lambda \in (-1/2,\infty)^n$, there exists $\gamma=\gamma(\lambda) \in (0,1/2]$ such that
\[
\int_{|z|<\sqrt{t}}
\chi_{ \{ x+z, \, x'+z \in \R  \} }
\big| \sqrt{ \Xi_\lambda(x,z,t) } - \sqrt{ \Xi_\lambda(x',z,t) }
\big|^2  \, dz
\lesssim
\bigg( \frac{|x-x'|^2}{t} \bigg)^{\gamma},
\]
uniformly in $x,x' \in \R$ and $t>0$.
Moreover, one can take any $\gamma \in (0,1/2]$ satisfying
$\gamma < \min_{1\le k \le n}(\lambda_k +1/2)$.
\end{lem}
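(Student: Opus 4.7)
The plan is to reduce the $n$-dimensional bound to a one-dimensional integral estimate by exploiting the product structure of $\Xi_\lambda$, and then to prove the latter by splitting the integration domain according to the size of the coordinate relative to $\sqrt{t}$.

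First, apply the pointwise inequality $(\sqrt{A}-\sqrt{B})^2 \le |A-B|$ for $A,B \ge 0$ (this is the $\xi=2$ case of the hint $|a-b|^\xi \lesssim |a^\xi-b^\xi|$, with $a=\sqrt{A}$, $b=\sqrt{B}$) to dominate the integrand by $|\Xi_\lambda(x,z,t)-\Xi_\lambda(x',z,t)|$. Writing $\Xi_\lambda=\prod_j \Xi^{j}$ with $\Xi^{j}(y,z_j,t):=(y+z_j)^{2\lambda_j}/V_{\sqrt{t}}^{\lambda_j,+}(y)$, telescope the product difference to obtain $n$ terms each containing a single factor $|\Xi^{k}(x_k,z_k,t)-\Xi^{k}(x'_k,z_k,t)|$ and nonnegative factors $\Xi^{j}$ in the remaining directions. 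Bounding $\chi_{\{|z|<\sqrt{t}\}}\le\prod_j \chi_{\{|z_j|<\sqrt{t}\}}$ and integrating each extraneous factor via the one-dimensional identity $\int_{|z_j|<\sqrt{t}}\Xi^{j}(y,z_j,t)\chi_{\{y+z_j>0\}}\,dz_j=1$ (direct from \eqref{Vexplicit}, a one-dimensional instance of Lemma~\ref{lem:intXi}) reduces the task to proving, for each $k$, the one-dimensional bound
\[
J_k := \int_{|z_k|<\sqrt{t}} \Bigl|\tfrac{(x_k+z_k)^{2\lambda_k}}{V_{\sqrt{t}}^{\lambda_k,+}(x_k)} - \tfrac{(x'_k+z_k)^{2\lambda_k}}{V_{\sqrt{t}}^{\lambda_k,+}(x'_k)}\Bigr|\,\chi_{\{x_k+z_k>0,\,x'_k+z_k>0\}}\,dz_k \lesssim (h_k^2/t)^\gamma,
\]
where $h_k:=|x_k-x'_k|\le|x-x'|$ and $\gamma\in(0,1/2]$ with $\gamma<\lambda_k+1/2$.

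To prove this one-dimensional bound, drop the subscript $k$, assume WLOG $x\le x'$, and write $h=x'-x$. If $h\ge\sqrt{t}/2$, the trivial bound $J\le 2$ suffices. For $h<\sqrt{t}/2$, substitute $u=x+z$ and distinguish two regimes. In the bulk regime $x\ge\sqrt{t}/2$, both $V$-factors are $\simeq \sqrt{t}(x+\sqrt{t})^{2\lambda}$ and $u\simeq x+\sqrt{t}$ throughout, so mean value estimates on $(u+h)^{2\lambda}-u^{2\lambda}$ and $V(x')-V(x)$ yield $J\lesssim h/\sqrt{t}=(h^2/t)^{1/2}$. In the singular regime $x<\sqrt{t}/2$, both $V$-factors are $\simeq t^{\lambda+1/2}$ and $u$ may approach zero; split the $u$-integral at $u=h$. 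On $(0,h)$, dominate by the sum of both terms and use $\int_0^h u^{2\lambda}\,du=h^{2\lambda+1}/(2\lambda+1)$ to obtain $O((h^2/t)^{\lambda+1/2})$. On $(h,x+\sqrt{t})$, decompose the integrand as
\[
\frac{u^{2\lambda}-(u+h)^{2\lambda}}{V(x)}+(u+h)^{2\lambda}\,\frac{V(x')-V(x)}{V(x)V(x')},
\]
bound $(u+h)^{2\lambda}-u^{2\lambda}$ by MVT (giving $\lesssim h\,u^{2\lambda-1}$) and $V(x')-V(x)$ either by MVT or by the subadditivity estimate $|a^{2\lambda+1}-b^{2\lambda+1}|\lesssim h^{2\lambda+1}$ (the instance of the hint for $\xi=2\lambda+1\le 1$, i.e., $\lambda\le 0$), and integrate. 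This produces $(h^2/t)^{\lambda+1/2}$ when $\lambda<0$ and $(h^2/t)^{1/2}$ (with a possible logarithmic factor at $\lambda=0$) when $\lambda\ge 0$; any logarithm is absorbed by choosing $\gamma$ strictly below $\min(1/2,\lambda+1/2)$.

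The main obstacle lies in the singular regime when $\lambda$ is close to $-1/2$: the factor $(x+z)^{2\lambda}$ develops a strong integrable singularity near $u=0$, and the bound $(h^2/t)^{\lambda+1/2}$ is sharp. The integrability condition $2\lambda+1>0$ is exactly what forces the strict inequality $\gamma<\lambda+1/2$, preventing equality. The general form of the hint (beyond $\xi=2$) enters precisely through the subadditivity of the powers of order $2\lambda+1$ appearing in the differences of $V$-values, which must be handled carefully for $\lambda\in(-1/2,1/2)$ to preserve the correct exponent—a subtlety absent from the nonnegative case $\lambda\in[0,\infty)^n$ treated in \cite[Lemma~4.8]{Sz1}.
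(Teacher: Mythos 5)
Your overall architecture coincides with the paper's: the reduction via $(\sqrt{A}-\sqrt{B})^2\le|A-B|$, the telescoping of the product difference to reduce to $n=1$ using Lemma~\ref{lem:intXi}, and the splitting of the one-dimensional integrand into a numerator-difference piece and a piece carrying $V_{\sqrt t}^{\lambda,+}(x)-V_{\sqrt t}^{\lambda,+}(x')$. Your handling of the numerator difference by splitting the $u$-integral at $u=h$ is a legitimate, more elementary substitute for the paper's device of applying \eqref{xiineq} with $\xi=1/(2\gamma)$ followed by the Mean Value Theorem, and it does recover the exponent $\min(\lambda+1/2,1/2)$.

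There is, however, a genuine gap in your case analysis: the range $\sqrt t/2\le x\lesssim 2\sqrt t$ is covered by neither regime as stated. In the bulk regime you assert $u=x+z\simeq x+\sqrt t$ \emph{throughout}, but for $\sqrt t/2\le x<2\sqrt t$ the variable $u$ still ranges down to (or arbitrarily close to) $0$ --- take $x=\sqrt t$ and $z$ near $-\sqrt t$ --- so the Mean Value bound $|(u+h)^{2\lambda}-u^{2\lambda}|\lesssim h\,(x+\sqrt t)^{2\lambda-1}$ fails whenever $\lambda<1/2$: the singularity at $u=0$ that your singular regime is built to handle is still present there. Moreover, once $x$ or $x'$ crosses the breakpoint $\sqrt t$ in \eqref{Vexplicit}, the term $\chi_{\{x>\sqrt t\}}(x-\sqrt t)^{2\lambda+1}$ enters $V_{\sqrt t}^{\lambda,+}$, and for $\sqrt t<x<2\sqrt t$ and $\lambda<0$ the Mean Value Theorem applied to $V_{\sqrt t}^{\lambda,+}(x')-V_{\sqrt t}^{\lambda,+}(x)$ produces the unbounded factor $(\theta-\sqrt t)^{2\lambda}$; here one genuinely needs the subadditivity $|a^{2\lambda+1}-b^{2\lambda+1}|\le|a-b|^{2\lambda+1}$ (for $\lambda\le0$), or \eqref{xiineq} with $\xi=(2\lambda+1)/(2\gamma)$ as in the paper, plus a separate case for when exactly one of $x,x'$ exceeds $\sqrt t$. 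You name the right tool, but in your singular regime ($x<\sqrt t/2$, hence $x'<\sqrt t$) the indicator terms vanish and it is never invoked, while in your bulk regime, where it is actually needed, you invoke only the Mean Value Theorem. The repair is routine --- move the threshold to $2\sqrt t$, extend the singular-regime analysis to all $x<2\sqrt t$, and add the three-way case distinction for the $(x-\sqrt t)^{2\lambda+1}$ terms (both arguments above $2\sqrt t$, both in $(\sqrt t,2\sqrt t)$, indicator flips) as in the paper's treatment of its term $J_2$ --- but as written the argument does not close.
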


\begin{proof}
Let $\gamma \in (0,1/2]$ satisfying
$\gamma < \min_{1\le k \le n}(\lambda_k +1/2)$ be fixed.
Using \eqref{xiineq} with $\xi = 2$, we see that in order to finish the proof, it suffices to verify that
\begin{align}\label{red11}
\int_{|z|<\sqrt{t}}
\chi_{ \{ x+z, \,  x'+z \in \R  \} }
\big| \Xi_\lambda(x,z,t)  - \Xi_\lambda(x',z,t)  \big| \, dz
\lesssim
\bigg( \frac{|x-x'|^2}{t} \bigg)^{\gamma}.
\end{align}
Further, we may  reduce our task to showing the one-dimensional version of \eqref{red11}.
Indeed, taking into account the product structure of $\Xi_\lambda(x,z,t)$ and the identity
\[
\prod_{j=1}^n a_j - \prod_{j=1}^n b_j
=
\sum_{k=1}^n \left( \prod_{j=1}^{k-1} b_j \right) (a_k - b_k)
\left( \prod_{j=k+1}^{n} a_j \right),
\qquad a_j,b_j \in \mathbb{R}, j=1,\ldots,n
\]
(here we use the standard notation concerning empty products) we get
\begin{align*}
& \big| \Xi_\lambda(x,z,t)  - \Xi_\lambda(x',z,t)  \big| \\
& \quad \le
\sum_{k=1}^n \left( \prod_{j=1}^{k-1} \Xi_{\lambda_j}(x'_j,z_j,t) \right)
\left( \prod_{j=k+1}^{n} \Xi_{\lambda_j}(x_j,z_j,t) \right)
\big| \Xi_{\lambda_k}(x_k,z_k,t)  - \Xi_{\lambda_k}(x'_k,z_k,t)  \big|.
\end{align*}
This together with the one-dimensional versions of Lemma~\ref{lem:intXi} and \eqref{red11} gives
\begin{align*}
& \int_{|z|<\sqrt{t}}
\chi_{ \{ x+z, \, x'+z \in \R  \} }
\big| \Xi_\lambda(x,z,t)  - \Xi_\lambda(x',z,t)  \big| \, dz \\
& \quad \le
\sum_{k=1}^n \left( \prod_{j=1}^{k-1} \int_{|z_j| < \sqrt{t}}
\chi_{ \{ x'_j+z_j >0  \} } \Xi_{\lambda_j}(x'_j,z_j,t) \, dz_j \right)
\left( \prod_{j=k+1}^{n} \int_{|z_j| < \sqrt{t}}  \chi_{ \{ x_j+z_j >0  \}} \Xi_{\lambda_j}(x_j,z_j,t) \, dz_j \right)\\
& \qquad \qquad \qquad \times
\left( \int_{|z_k| < \sqrt{t}} \chi_{ \{ x_k+z_k, \, x'_k+z_k >0  \} }
\big| \Xi_{\lambda_k}(x_k,z_k,t)  - \Xi_{\lambda_k}(x'_k,z_k,t)  \big| \, dz_k \right)\\
& \quad \lesssim
\sum_{k=1}^n \bigg( \frac{|x_k-x'_k|^2}{t} \bigg)^{\gamma}
\lesssim
\bigg( \frac{|x-x'|^2}{t} \bigg)^{\gamma}.
\end{align*}

Next, we show \eqref{red11} for $n=1$.
We can assume that $|x-x'| \le \sqrt{t}$, since otherwise
\eqref{red11} is a straightforward consequence of Lemma \ref{lem:intXi} and the inequality $1 \le \Big( \frac{|x-x'|^2}{t} \Big)^{\gamma}$.
Further, observe that
\begin{align*}
& \chi_{ \{ x+z, \, x'+z > 0 \} }
\big| \Xi_{\lambda}(x,z,t)  - \Xi_{\lambda}(x',z,t)  \big| \\
& \quad \le
\chi_{ \{ x+z, \, x'+z > 0 \} }
\frac{\left| (x+z)^{2\lambda} - (x'+z)^{2\lambda} \right| }
{V_{\sqrt{t}}^{\lambda,+}(x')}
+
\chi_{ \{ x+z, \, x'+z > 0 \} } \Xi_{\lambda}(x,z,t)
\frac{\left| V_{\sqrt{t}}^{\lambda,+}(x) - V_{\sqrt{t}}^{\lambda,+}(x') \right|}
{V_{\sqrt{t}}^{\lambda,+}(x')} \\
& \quad \equiv
I_1(x,x',z,t) + I_2(x,x',z,t).
\end{align*}
We will treat $I_1$ and $I_2$ separately. We first deal with $I_1$. Using \eqref{xiineq} specified to $\xi=1/(2\gamma)$ and then the Mean Value Theorem we arrive at the estimates
\begin{align*}
V_{\sqrt{t}}^{\lambda,+}(x') I_1(x,x',z,t)
& \lesssim
\chi_{ \{ x+z, \, x'+z > 0 \} }
\left| (x+z)^{\lambda/\gamma} - (x'+z)^{\lambda/\gamma} \right|^{2\gamma} \\
& \lesssim
\chi_{ \{ x+z, \, x'+z > 0 \} }
|x-x'|^{2\gamma} (\theta + z)^{2(\lambda - \gamma)},
\end{align*}
where $\theta$ is a convex combination of $x$ and $x'$, which may depend on $z$ and $t$. We denote
\[
y=\left\{ \begin{array}{ll}
x \vee x', & \lambda \ge \gamma \\
x \wedge x', & \lambda < \gamma
\end{array} \right..
\]
Then obviously
\begin{align*}
\int_{|z|<\sqrt{t}}  I_1(x,x',z,t) \, dz
\lesssim
\frac{|x-x'|^{2\gamma}}
{V_{\sqrt{t}}^{\lambda,+}(x')}
\int_{|z|<\sqrt{t}} \chi_{ \{ y+z > 0 \} }
(y + z)^{2(\lambda - \gamma)} \, dz
=
|x-x'|^{2\gamma} \frac{w^+_{\lambda - \gamma}(B(y,\sqrt{t}))}
{V_{\sqrt{t}}^{\lambda,+}(x')}.
\end{align*}
Combining this with \eqref{ball} and the relations
\begin{equation}\label{comp111}
(x + \sqrt{t}) \simeq (x' + \sqrt{t}) \simeq (x \wedge x' + \sqrt{t}) \simeq (x \vee x' + \sqrt{t}),
\end{equation}
valid when $|x-x'| \le \sqrt{t}$,
we get
\[
\int_{|z|<\sqrt{t}}  I_1(x,x',z,t) \, dz
\lesssim
|x-x'|^{2\gamma} \frac{(y+\sqrt{t})^{2(\lambda - \gamma)}}{(x'+\sqrt{t})^{2\lambda}}
\lesssim
\bigg( \frac{|x-x'|^2}{t} \bigg)^{\gamma}.
\]

We now focus on $I_2$.
By Lemma~\ref{lem:intXi}, an explicit expression of $V_{\sqrt{t}}^{\lambda,+}(x')$ (see \eqref{Vexplicit}) and \eqref{ball} we have
\begin{align*}
& \int_{|z|<\sqrt{t}}  I_2(x,x',z,t) \, dz
\lesssim
\frac{\left| V_{\sqrt{t}}^{\lambda,+}(x) - V_{\sqrt{t}}^{\lambda,+}(x') \right|}
{V_{\sqrt{t}}^{\lambda,+}(x')} \\
    & \quad \lesssim
\frac{\left| (x + \sqrt{t})^{2\lambda + 1} - (x' + \sqrt{t})^{2\lambda + 1} \right|}
{ \sqrt{t} (x' + \sqrt{t})^{2\lambda} }
+
\frac{\left| \chi_{ \{ x > \sqrt{t} \} }(x - \sqrt{t})^{2\lambda + 1} -
\chi_{ \{ x' > \sqrt{t} \} } (x' - \sqrt{t})^{2\lambda + 1} \right|}
{ \sqrt{t} (x' + \sqrt{t})^{2\lambda} } \\
    & \quad \equiv J_1(x,x',t) + J_2(x,x',t).
\end{align*}
Thus to complete the proof it is enough to check that
\[
J_j(x,x',t) \lesssim
\frac{|x-x'|}{\sqrt{t}}
+
\bigg( \frac{|x-x'|^2}{t} \bigg)^{\lambda + 1/2}
+
\bigg( \frac{|x-x'|^2}{t} \bigg)^{\gamma},
\qquad |x-x'| \le \sqrt{t}, \quad j=1,2;
\]
notice that in this sum the last term is dominating.
We first focus on $J_1$. By means of the Mean Value Theorem and \eqref{comp111}
we obtain
\[
J_1(x,x',t) \lesssim \frac{|x - x'| (\theta + \sqrt{t})^{2\lambda}}
{\sqrt{t} (x' + \sqrt{t})^{2\lambda}}
\simeq \frac{|x - x'|}{\sqrt{t}},
\]
where $\theta$ is a convex combination of $x$ and $x'$. To treat $J_2$ we observe that
\begin{align*}
J_2(x,x',t) &=
\chi_{ \{ x \wedge x' \le \sqrt{t} < x \vee x' \} }
\frac{(x \vee x' - \sqrt{t})^{2\lambda + 1}}
{\sqrt{t} (x' + \sqrt{t})^{2\lambda}} \\
& \quad +
\left( \chi_{ \{ x \wedge x' \ge 2\sqrt{t} \} }
+ \chi_{ \{ 2\sqrt{t} > x \wedge x' > \sqrt{t} \} } \right)
\frac{\left| (x-\sqrt{t})^{2\lambda + 1} - (x'-\sqrt{t})^{2\lambda + 1} \right| }
{\sqrt{t} (x' + \sqrt{t})^{2\lambda}}.
\end{align*}
The relevant estimate for the first term is straightforward because, in view of
\eqref{comp111}, we have
\[
\chi_{ \{ x \wedge x' \le \sqrt{t} < x \vee x' \} }
\frac{(x \vee x' - \sqrt{t})^{2\lambda + 1}}
{\sqrt{t} (x' + \sqrt{t})^{2\lambda}}
\lesssim
\frac{(x \vee x' - x \wedge x')^{2\lambda + 1}}{(\sqrt{t})^{2\lambda + 1}}
=
\bigg( \frac{|x-x'|^2}{t} \bigg)^{\lambda + 1/2}.
\]
To bound the second one we apply the Mean Value Theorem to obtain
\begin{align*}
\chi_{ \{ x \wedge x' \ge 2\sqrt{t} \} }
\left| (x-\sqrt{t})^{2\lambda + 1} - (x'-\sqrt{t})^{2\lambda + 1} \right|
    & \simeq
\chi_{ \{ x \wedge x' \ge 2\sqrt{t} \} }
| x-x' | (\theta-\sqrt{t})^{2\lambda},
\end{align*}
where $\theta$ is a convex combination of $x$ and $x'$.
Since for $x\wedge x' \ge 2\sqrt{t}$ the expression $\theta - \sqrt{t}$ is comparable to quantities appearing in \eqref{comp111}, we get the desired estimate.
Treating the last term in a similar way as $I_1$ at the beginning of the proof, we get
\begin{align*}
\chi_{ \{ 2\sqrt{t} > x \wedge x' > \sqrt{t} \} }
\left| (x-\sqrt{t})^{2\lambda + 1} - (x'-\sqrt{t})^{2\lambda + 1} \right|
\lesssim
\chi_{ \{ 2\sqrt{t} > x \wedge x' > \sqrt{t} \} }
|x - x'|^{2\gamma}
 (\theta-\sqrt{t})^{2\lambda + 1 - 2\gamma},
\end{align*}
where $\theta$ is a convex combination of $x$ and $x'$.
Since $2\lambda +1 -2\gamma >0$,
in view of \eqref{comp111} we have
\begin{align*}
\chi_{ \{ 2\sqrt{t} > x \wedge x' > \sqrt{t} \} }
\frac{\left| (x-\sqrt{t})^{2\lambda + 1} - (x'-\sqrt{t})^{2\lambda + 1} \right| }
{\sqrt{t} (x' + \sqrt{t})^{2\lambda}}
& \lesssim
\chi_{ \{ 2\sqrt{t} > x \wedge x' > \sqrt{t} \} }
|x - x'|^{2\gamma}
\frac{ (x \vee x' + \sqrt{t})^{2\lambda + 1 - 2\gamma} }
{\sqrt{t} (x' + \sqrt{t})^{2\lambda}} \\
& \simeq
\chi_{ \{ 2\sqrt{t} > x \wedge x' > \sqrt{t} \} }
\bigg( \frac{|x-x'|^2}{t} \bigg)^{\gamma}.
\end{align*}
This finishes the proof of Lemma~\ref{lem:intdifXi}.
\end{proof}

\begin{lem}\label{lem:intXi2}
Let $\lambda \in (-1/2,\infty)^n$ be fixed. Then there exists $\gamma=\gamma(\lambda) \in (0,1/2]$ such that
\[
\int_{|z|<\sqrt{t}} \chi_{ \{ x+z \in \R, \, x'+z \notin \R \} }
\Xi_{\lambda}(x,z,t)  \, dz
\lesssim
\bigg( \frac{|x-x'|^2}{t} \bigg)^{\gamma},
\]
uniformly in $x,x' \in \R$ and $t>0$.
Moreover, one can take any $\gamma \in (0,1/2]$ satisfying
$\gamma \le \min_{1\le k \le n}(\lambda_k +1/2)$.
\end{lem}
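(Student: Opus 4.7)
The strategy parallels that of Lemma~\ref{lem:intdifXi} but is considerably simpler, since the integrand is a single nonnegative quantity and no cancellation between $\Xi_\lambda(x,z,t)$ and $\Xi_\lambda(x',z,t)$ is needed. As in the previous lemma, the case $|x-x'|>\sqrt{t}$ reduces at once to Lemma~\ref{lem:intXi} (the right-hand side then exceeds $1$), so we may assume $|x-x'|\le\sqrt{t}$ throughout.

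Next we reduce to a one-dimensional estimate. From the union bound
\[
\chi_{\{x+z\in\R,\, x'+z\notin\R\}}\le \sum_{k=1}^n \chi_{\{x_k+z_k>0,\, x'_k+z_k\le 0\}}\prod_{j\ne k}\chi_{\{x_j+z_j>0\}},
\]
the product structure $\Xi_\lambda(x,z,t)=\prod_{j}\Xi_{\lambda_j}(x_j,z_j,t)$, and the inclusion $\{|z|<\sqrt{t}\}\subset \prod_{j}\{|z_j|<\sqrt{t}\}$, the integral is majorized by a sum of $n$ products in which the $n-1$ factors with $j\neq k$ are controlled by the one-dimensional version of Lemma~\ref{lem:intXi}. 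It therefore suffices to prove the one-dimensional bound
\[
\int_{|z_k|<\sqrt{t}}\chi_{\{x_k+z_k>0,\, x'_k+z_k\le 0\}}\Xi_{\lambda_k}(x_k,z_k,t)\,dz_k \lesssim \Big(\frac{|x_k-x'_k|^2}{t}\Big)^{\lambda_k+1/2}.
\]

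For this one-dimensional estimate we may assume $x_k>x'_k$, as otherwise the integrand vanishes. The relevant $z_k$-region lies in $(-x_k,-x'_k]$ and is nonempty only if $x'_k<\sqrt{t}$; together with $|x_k-x'_k|\le\sqrt{t}$ this forces $x_k,x'_k\lesssim\sqrt{t}$, whence $V_{\sqrt{t}}^{\lambda_k,+}(x_k)\simeq t^{\lambda_k+1/2}$ by \eqref{ball}. The substitution $u=x_k+z_k$ and the fact that $2\lambda_k+1>0$ then give
\[
\int_{|z_k|<\sqrt{t}}\chi_{\{x_k+z_k>0,\, x'_k+z_k\le 0\}}\Xi_{\lambda_k}(x_k,z_k,t)\,dz_k \lesssim \frac{1}{t^{\lambda_k+1/2}}\int_0^{x_k-x'_k}u^{2\lambda_k}\,du \simeq \Big(\frac{|x_k-x'_k|^2}{t}\Big)^{\lambda_k+1/2}.
\]

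Finally, fix any $\gamma\in(0,1/2]$ with $\gamma\le \min_{1\le k\le n}(\lambda_k+1/2)$; such $\gamma$ exists since each $\lambda_k>-1/2$. Because $|x_k-x'_k|^2/t\le 1$ and $\lambda_k+1/2-\gamma\ge 0$, we may lower each exponent from $\lambda_k+1/2$ to $\gamma$ and replace $|x_k-x'_k|$ by $|x-x'|$, yielding the asserted bound. The main technical point is the identification $V_{\sqrt{t}}^{\lambda_k,+}(x_k)\simeq t^{\lambda_k+1/2}$ on the effective integration region; apart from this, the proof is substantially lighter than that of Lemma~\ref{lem:intdifXi} as neither Mean Value Theorem arguments nor the dichotomy involving $\chi_{\{x_j>\sqrt{t}\}}$ enter.
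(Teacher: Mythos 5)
Your proof is correct and follows essentially the same route as the paper: reduce to $|x-x'|\le\sqrt{t}$ via Lemma~\ref{lem:intXi}, pass to one dimension through the union bound and the product structure, and integrate $(x_k+z_k)^{2\lambda_k}$ explicitly over $(-x_k,-x'_k]$. Your observation that the nonemptiness of the integration region forces $x'_k<\sqrt{t}$ and hence $x_k<2\sqrt{t}$ is a valid minor streamlining --- it shows that the paper's Case~1 ($x_k\ge 2\sqrt{t}$) is vacuous under the standing assumption $|x-x'|\le\sqrt{t}$, so only the bound $\big(|x_k-x'_k|^2/t\big)^{\lambda_k+1/2}$ is needed.
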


\begin{proof}
Let $\gamma \in (0,1/2]$ satisfying
$\gamma \le \min_{1\le k \le n}(\lambda_k +1/2)$ be fixed.
Observe that if $|x - x'| > \sqrt{t}$, then using Lemma \ref{lem:intXi} and the obvious inequality $1 \le \Big( \frac{|x-x'|^2}{t} \Big)^{\gamma}$, we get the desired bound. Thus, we can assume that $|x - x'| \le \sqrt{t}$.
Since the constraint $x'+z \notin \R$ forces $z_k \le -x'_k$ for some $k \in \{ 1, \ldots, n \}$,
it is enough to verify that for every
$k \in \{ 1, \ldots, n \}$ we have
\[
\int_{|z|<\sqrt{t}} \chi_{ \{ x+z \in \R, \, z_k \le -x'_k \} } \Xi_{\lambda}(x,z,t)  \, dz
\lesssim
\frac{|x_k-x'_k|}{\sqrt{t}} +
\bigg( \frac{|x_k-x'_k|^2}{t} \bigg)^{\lambda_k + 1/2}.
\]
Further, by the product structure of $\Xi_\lambda(x,z,t)$, the one-dimensional version of Lemma \ref{lem:intXi} and \eqref{ball}
we see that it suffices to prove that
\begin{align*}
\int_{|z_k|<\sqrt{t}} \chi_{ \{  -x_k< z_k \le -x'_k \} }
\frac{(x_k+z_k)^{2\lambda_k}}{ \sqrt{t} (x_k+\sqrt{t})^{2\lambda_k} } \, dz_k
\lesssim
\frac{|x_k-x'_k|}{\sqrt{t}} + \bigg( \frac{|x_k-x'_k|^2}{t} \bigg)^{\lambda_k +1/2}.
\end{align*}
To proceed it is convenient to distinguish two cases.

\noindent \textbf{Case 1:} $\mathbf {x_k \ge 2\sqrt{t}.}$ We have $x_k + z_k \simeq x_k \simeq x_k + \sqrt{t}$ and the required bound follows.

\noindent \textbf{Case 2:} $\mathbf {x_k < 2\sqrt{t}.}$ Since
$x_k + \sqrt{t} \simeq \sqrt{t}$,
an integration gives us
\begin{align*}
\int_{|z_k|<\sqrt{t}} \chi_{ \{  -x_k< z_k \le -x'_k \} }
\frac{(x_k+z_k)^{2\lambda_k}}{ \sqrt{t} (x_k+\sqrt{t})^{2\lambda_k} } \, dz_k
\lesssim
\int_{-x_k}^{-x'_k} \frac{(x_k+z_k)^{2\lambda_k}}{ t^{\lambda_k + 1/2} } \, dz_k
\simeq
\bigg( \frac{|x_k-x'_k|^2}{t} \bigg)^{\lambda_k +1/2}.
\end{align*}
\end{proof}

In the proofs of Propositions~\ref{pro:kerest} and \ref{pro:kerestLAIP} we tacitly assume that passing with the differentiation in
$x_j,y_j$ or $t$ under integrals agains $dt, d\nu(t)$ or $du$ is legitimate. In fact such manipulations can easily be justified by means of the dominated convergence theorem and the estimates obtained in Lemma~\ref{lem:EST} and along the proofs of these propositions.

%%%%%%%%%%%%%%%%%%%%%%%%%%%%%%%%%%%%%%%%%%%%%%%%%%%%%%%%%%%%%%%%%%%%%%%%%%%%%%%%%%%%%%%%%
\begin{proof}[Proof of Proposition \ref{pro:kerest}; the case of $\mathcal{W}^{\lambda, \eta, +}(x,y)$]
The growth estimate \eqref{gr} is an easy consequence of \eqref{Bhk} and Lemma~\ref{lem:EST2}
(with $p=\infty$, $W=1$, $C=1/4$ and $|\alpha|=|\beta|=|\zeta|=|\rho|=|\tau|=u=0$).

For symmetry reasons we only need to show the smoothness condition \eqref{sm1}. In view of the
Mean Value Theorem we have
$$|\G_t^{\lambda,\eta,+}(x,y) - \G_t^{\lambda,\eta,+}(x',y)|
    \leq |x-x'| \Big| \nabla_{\!x} \G_t^{\lambda,\eta,+}(x,y) \big|_{x=\t} \Big|,$$
with $\t = \t(t,x,x',y)$ a convex combination of $x$ and $x'$. Hence, it remains to verify that
$$\bigg\| \Big| \nabla_{\!x} \G_t^{\lambda,\eta,+}(x,y) \big|_{x=\t} \Big| \bigg\|_{L^\infty (dt)}
    \lesssim \frac{1}{|x-y| w^+_{\lambda}(B(x,|x-y|))}, \qquad |x-y|>2|x-x'|.$$
This, however, follows by applying successively
Lemma~\ref{lem:EST} (taken with $K=|r|=|M|=0$ and $\ell=e_j$, $j=1, \dots, n$, where $e_j$ represents the $j$th coordinate vector in $\RR$), the relations
\begin{equation} \label{est1}
    \t \le x \vee x' , \qquad  |x - \t| \le |x-x'| , \qquad |x - x \vee x'| \le |x - x'|,
\end{equation}
Lemma~\ref{lem:theta} twice (with $z=\theta$ and $z=x \vee x'$),
Lemma~\ref{lem:EST2} (choosing $p=\infty$, $W=1$, $C=1/128$, $u=1$ and $\tau = 0$) and finally
Lemma~\ref{lem:double} (specified to $\gamma=1$ and $z=x \vee x'$).
\end{proof}

%%%%%%%%%%%%%%%%%%%%%%%%%%%%%%%%%%%%%%%%%%%%%%%%%%%%%%%%%%%%%%%%%%%%%%%%%%%%%%%%%%%%%%%%%
\begin{proof}[Proof of Proposition \ref{pro:kerest}; the case of $\mathcal{G}_{K,M}^{\lambda,\eta,+}(x,y)$]
The growth condition is obtained by combining Lemma~\ref{lem:EST} (taking $|\ell|=|r|=0$) and
Lemma~\ref{lem:EST2} (with $p=2$, $W=2K+|M|$, $C=1/8$, $u=0$ and $\tau = 0$).

We now prove the bound \eqref{sm1} for $\mathcal{G}_{K,M}^{\lambda,\eta,+}(x,y)$; the estimate \eqref{sm2} can be treated analogously.
After applying the Mean Value Theorem, our objective is to see that
$$ \bigg\| \Big| \nabla_{\! x} \partial_{t}^K \delta_{\eta,M,x} \G_t^{\lambda,\eta,+}(x,y) \big|_{x=\t} \Big| \bigg\|_{L^2 (t^{2K+|M|-1}dt)}
    \lesssim \frac{1}{|x-y|w^+_{\lambda}(B(x,|x-y|))}, \qquad |x-y|>2|x-x'|,$$
where $\t = \t(t,x,x',y)$ is a convex combination of $x$ and $x'$. Again, we use sequently
Lemma~\ref{lem:EST} (selecting $|r|=0$ and $\ell=e_j$, $j=1, \dots, n$),
relations \eqref{est1},
Lemma~\ref{lem:theta} twice (first with $z=\theta$ and secondly with $z=x \vee x'$),
Lemma~\ref{lem:EST2} (taken with $p=2$, $W=2K+|M|$, $C=1/128$, $u=1$ and $\tau = 0$) and
Lemma~\ref{lem:double} (with $\gamma=1$ and $z=x \vee x'$)
to get the desired estimate.
\end{proof}

%%%%%%%%%%%%%%%%%%%%%%%%%%%%%%%%%%%%%%%%%%%%%%%%%%%%%%%%%%%%%%%%%%%%%%%%%%%%%%%%%%%%%%%%%
\begin{proof}[Proof of Proposition \ref{pro:kerest}; the case of $K^{\lambda,\eta,+}_{\Phi}(x,y)$]
The growth condition is achieved directly by the fact that $\Phi$ is a bounded function,
Lemma~\ref{lem:EST} (taken with $K=1$ and $|\ell|=|r|=|M|=0$) and
Lemma~\ref{lem:EST2} (choosing $p=W=1$, $C=1/8$, $u=0$ and $\tau = 0$).

Next, we show the gradient estimate \eqref{grad}. Since $\Phi\in L^\infty(dt)$, our goal
is to obtain the bound
$$ \Big\| \big| \nabla_{\! x,y} \partial_{t} \G_t^{\lambda,\eta,+}(x,y) \big| \Big\|_{L^1 (dt)}
    \lesssim \frac{1}{|x-y|w^+_{\lambda}(B(x,|x-y|))}, \qquad x \ne y.$$
This, however, follows from
Lemma~\ref{lem:EST} (with $K=1$, $|M|=0$ and $|\ell|=0$, $r=e_j$ or $\ell=e_j$, $|r|=0$, $j=1, \dots,n$) and
Lemma~\ref{lem:EST2} (specified to $p=W=1$, $C=1/8$, $u=1$ and $\tau = 0$).
\end{proof}

%%%%%%%%%%%%%%%%%%%%%%%%%%%%%%%%%%%%%%%%%%%%%%%%%%%%%%%%%%%%%%%%%%%%%%%%%%%%%%%%%%%%%%%%%
\begin{proof}[Proof of Proposition \ref{pro:kerest}; the case of $K^{\lambda,\eta,+}_{\nu}(x,y)$]
Since $\nu$ is a complex measure, it has finite total variation, and then proving the
growth and smoothness properties for $K^{\lambda,\eta,+}_{\nu}(x,y)$ is reduced to
showing \eqref{gr} and \eqref{sm1} for the kernel $\{\G_t^{\lambda,\eta,+}(x,y)\}_{t>0}$
in the Banach space $\B=L^\infty(dt)$. This was already done in the case of
$\mathcal{W}^{\lambda, \eta, +}(x,y)$.
\end{proof}

%%%%%%%%%%%%%%%%%%%%%%%%%%%%%%%%%%%%%%%%%%%%%%%%%%%%%%%%%%%%%%%%%%%%%%%%%%%%%%%%%%%%%%%%%
\begin{proof}[Proof of Proposition \ref{pro:kerest}; the case of $R_M^{\lambda,\eta,+}(x,y)$]
Combining
Lemma~\ref{lem:EST} (taken with $K=|\ell|=|r|=0$) and
Lemma~\ref{lem:EST2} (choosing $p=1$, $W=|M|/2$, $C=1/8$, $u=0$ and $\tau = 0$)
we attain the growth bound for this kernel.

On the other hand, the gradient estimate is achieved by using
Lemma~\ref{lem:EST} (specified to  $K=0$ and $|\ell|=0$, $r=e_j$ or $\ell=e_j$, $|r|=0$, $j=1, \dots,n$) and
Lemma~\ref{lem:EST2} (with $p=1$, $W=|M|/2$, $C=1/8$, $u=1$ and $\tau =0$).
\end{proof}

%%%%%%%%%%%%%%%%%%%%%%%%%%%%%%%%%%%%%%%%%%%%%%%%%%%%%%%%%%%%%%%%%%%%%%%%%%%%%%%%%%%%%%%%%
\begin{proof}[Proof of Proposition \ref{pro:kerest}; the case of $\mathcal{S}^{\lambda,\eta,+}_{K,M}(x,y)$]
We first deal with the growth estimate.
Using Lemma~\ref{lem:EST}, and then Lemma~\ref{lem:qz} we infer that
\begin{align*}
& \left| \partial_{t}^K \delta_{\eta,M,\mathbf{x}} \G_{t}^{\lambda,\eta,+}(\mathbf{x},y)\Big|_{\mathbf{x} = x+z}
\right| \\
& \quad \lesssim
\sum_{\substack{ \eps, \zeta, \rho \in \{0,1\}^n \\
\alpha, \beta  \in \{0,1,2\}^n  }} (x+z)^{2\eps - \alpha\eps + \eta - \zeta\eta} y^{2\eps-\beta \eps + \eta - \rho \eta}
  t^{-(n/2+|\lambda|+ |\eta| + 2|\eps|)-K-|M|/2 + (|\alpha \eps| + |\zeta \eta| + |\beta \eps| + |\rho \eta|)/2} \\
& \quad \qquad \times
                \int \exp\left(-\frac{\q}{16t}\right)
d\Omega_{\lambda+\eta+ \mathbf{1} + \eps }(s),
\end{align*}
provided that $(z,t) \in A$.
Now an application of the estimate
\begin{equation}\label{est111}
|(x+z)^\kappa| \le (x+\sqrt{t}\mathbf{1})^\kappa
\lesssim
\sum_{0\le \tau \le \kappa} x^{\kappa - \tau} t^{|\tau|/2},
\qquad x\in\R, \quad (z,t) \in A,
\end{equation}
where $\kappa \in \mathbb{N}^n$ is fixed, gives the bound
\begin{align}\nonumber
& \left| \partial_{t}^K \delta_{\eta,M,\mathbf{x}} \G_{t}^{\lambda,\eta,+}(\mathbf{x},y)\Big|_{\mathbf{x} = x+z}
\right|  \\ \label{est222}
& \quad \lesssim
\sum_{\substack{ \eps, \zeta, \rho \in \{0,1\}^n \\ \nonumber
\alpha, \beta  \in \{0,1,2\}^n  }}
\sum_{0 \le \tau \le 2\eps - \alpha\eps + \eta - \zeta\eta}
x^{2\eps - \alpha\eps + \eta - \zeta\eta - \tau}
y^{2\eps-\beta \eps + \eta - \rho \eta} t^{-(n/2+|\lambda|+ |\eta| +2|\eps|)-K-|M|/2 } \\
& \quad \qquad \times
  t^{(|\alpha \eps| + |\zeta \eta| + |\beta \eps| + |\rho \eta| + |\tau|)/2}
                \int \exp\left(-\frac{\q}{16t}\right)
d\Omega_{\lambda+\eta+ \mathbf{1} + \eps }(s),
\end{align}
for $(z,t) \in A$. Using sequently this estimate, Lemma~\ref{lem:intXi} and then Lemma~\ref{lem:EST2} (taken with $p=2$, $W=2K + |M|$, $C = 1/16$, $u=0$) we arrive at the desired bound.

Next, we show the first smoothness estimate. More precisely, we will show \eqref{sm1} with any fixed $\gamma \in (0,1/2]$ satisfying
$\gamma < \min_{1\le k \le n} (\lambda_k +1/2)$. To proceed, it is natural to split the region of integration $A$ into four subsets, depending on whether $x+z,x'+z$ are in $\R$ or not. We define
\begin{displaymath}
\begin {array}{lll}
A_1 &= \left\{ (z,t) \in A : x+z \in \R, x'+z \in \R \right\},
\quad
A_2 &= \left\{ (z,t) \in A : x+z \in \R, x'+z \notin \R \right\},\\
A_3 &= \left\{ (z,t) \in A : x+z \notin \R, x'+z \in \R \right\},
\quad
A_4 &= \left\{ (z,t) \in A : x+z \notin \R, x'+z \notin \R \right\}.
\end {array}
\end{displaymath}
The analysis related to $A_4$ is trivial and the case of $A_3$ is analogous to $A_2$, thus we analyze only two cases.

\noindent
{\bf Case 1:} \textbf{The norm related to}
$\mathbf{L^2(A_{1},t^{2K + |M| - 1}dzdt)}.$
By the triangle inequality
\begin{align*}
& \left| \partial_{t}^K \delta_{\eta,M,\mathbf{x}} \G_{t}^{\lambda,\eta,+}(\mathbf{x},y)\Big|_{\mathbf{x} = x+z}
\sqrt{\Xi_{\lambda}(x,z,t)}
-
\partial_{t}^K \delta_{\eta,M,\mathbf{x}} \G_{t}^{\lambda,\eta,+}(\mathbf{x},y)\Big|_{\mathbf{x} = x'+z}
\sqrt{\Xi_{\lambda}(x',z,t)}
\right| \\
& \quad \le
\left| \partial_{t}^K \delta_{\eta,M,\mathbf{x}} \G_{t}^{\lambda,\eta,+}(\mathbf{x},y)\Big|_{\mathbf{x} = x+z}
-
\partial_{t}^K \delta_{\eta,M,\mathbf{x}} \G_{t}^{\lambda,\eta,+}(\mathbf{x},y)\Big|_{\mathbf{x} = x'+z}
\right| \sqrt{\Xi_{\lambda}(x',z,t)} \\
& \qquad  +
\left| \partial_{t}^K \delta_{\eta,M,\mathbf{x}} \G_{t}^{\lambda,\eta,+}(\mathbf{x},y)\Big|_{\mathbf{x} = x+z}
\right|
\left| \sqrt{\Xi_{\lambda}(x,z,t)} - \sqrt{\Xi_{\lambda}(x',z,t)} \right| \\
& \quad \equiv
I_1(x,x',y,z,t) + I_2(x,x',y,z,t).
\end{align*}
We will treat $I_1$ and $I_2$ separately. An application of the Mean Value Theorem and then successively Lemma~\ref{lem:EST}, Lemma~\ref{lem:qz}, \eqref{est1}, Lemma~\ref{lem:theta} twice (first with $z=\theta$ and then with $z= x \vee x'$) and \eqref{est111} gives
\begin{align*}
I_1(x,x',y,z,t) & \lesssim
|x - x'| \sum_{\substack{ \eps, \zeta, \rho \in \{0,1\}^n \\
\alpha, \beta  \in \{0,1,2\}^n  }}
\sum_{0 \le \tau \le 2\eps - \alpha\eps + \eta - \zeta\eta}
(x \vee x')^{2\eps - \alpha\eps + \eta - \zeta\eta - \tau}
y^{2\eps-\beta \eps + \eta - \rho \eta} \\
& \qquad \times
  t^{-(n/2+|\lambda|+|\eta| +2|\eps|)-K-|M|/2 -1/2 + (|\alpha \eps| + |\zeta \eta| + |\beta \eps| + |\rho \eta| + |\tau|)/2} \\
& \qquad \times
                \int \exp\left(-\frac{q(x \vee x', y ,s)}{256t}\right)
d\Omega_{\lambda+\eta+ \mathbf{1} + \eps }(s)
\sqrt{\Xi_{\lambda}(x',z,t)},
\end{align*}
provided that $(z,t) \in A_1$ and $|x-y|>2|x-x'|$.
Combining this with Lemma~\ref{lem:intXi},
Lemma~\ref{lem:EST2} (specified to $p=2$, $W=2K + |M|$, $C=1/256$, $u=1$)
and Lemma~\ref{lem:double} (with $\gamma=1$ and $z=x \vee x'$) leads to the required bound for $I_1$. We now focus on $I_2$.
Using the estimate \eqref{est222} and Lemma~\ref{lem:intdifXi} we get
\begin{align}\label{est444}
& \left\| I_2(x,x',y,z,t)  \right\|_{L^2(A_1,t^{2K + |M| -1} dzdt)} \\ \nonumber
& \quad \lesssim
|x - x'|^{\gamma}
\sum_{\substack{ \eps, \zeta, \rho \in \{0,1\}^n \\
\alpha, \beta  \in \{0,1,2\}^n  }}
\sum_{0 \le \tau \le 2\eps - \alpha\eps + \eta - \zeta\eta}
x^{2\eps - \alpha\eps + \eta - \zeta\eta - \tau}
y^{2\eps-\beta \eps + \eta - \rho \eta} \\ \nonumber
& \qquad \times
\Big\| t^{-(n/2+|\lambda|+|\eta| +2|\eps|)
-K-|M|/2 -\gamma/2 + (|\alpha \eps| + |\zeta \eta| + |\beta \eps| + |\rho \eta| + |\tau|)/2} \\ \nonumber
& \qquad \times
                \int \exp\left(-\frac{\q}{16t}\right)
d\Omega_{\lambda+\eta+ \mathbf{1} + \eps }(s) \Big\|_{L^2(t^{2K + |M| -1} dt)}.
\end{align}
This, with the aid of Lemma~\ref{lem:EST2} (taken with $p=2$, $W=2K + |M|$, $C=1/16$, $u=\gamma$), completes the analysis associated with $A_1$.

\noindent
{\bf Case 2:} \textbf{The norm related to}
$\mathbf{L^2(A_{2},t^{2K + |M| - 1}dzdt)}.$
Since $\mathcal{S}^{\lambda,\eta,+}_{K,M}(x',y)=0$, our task is to show that
\begin{align}\label{est333}
\|
\mathcal{S}^{\lambda,\eta,+}_{K,M}(x,y)
\|_{L^2(A_2,t^{2K + |M| -1} dzdt)}
\lesssim \bigg(\frac{|x-x'|}{|x-y|} \bigg)^{\gamma}\, \frac{1}{w^+_{\lambda}(B(x,|x-y|))},
\end{align}
for $|x-y|>2|x-x'|$.
By means of the estimate \eqref{est222} and Lemma~\ref{lem:intXi2} we see that
\begin{align*}
\|
& \mathcal{S}^{\lambda,\eta,+}_{K,M}(x,y)
\|_{L^2(A_2,t^{2K + |M| -1} dzdt)} \\
& \quad
\lesssim |x-x'|^{\gamma}
\sum_{\substack{ \eps, \zeta, \rho \in \{0,1\}^n \\
\alpha, \beta  \in \{0,1,2\}^n  }}
\sum_{0 \le \tau \le 2\eps - \alpha\eps + \eta - \zeta\eta}
x^{2\eps - \alpha\eps + \eta - \zeta\eta - \tau}
y^{2\eps-\beta \eps + \eta - \rho \eta}
\Big\| t^{-(n/2+|\lambda|+|\eta| +2|\eps|) } \\
& \qquad \times
t^{-K-|M|/2 -\gamma/2 + (|\alpha \eps| + |\zeta \eta| + |\beta \eps| + |\rho \eta| + |\tau|)/2}
                \int \exp\left(-\frac{\q}{16t}\right)
d\Omega_{\lambda+\eta+ \mathbf{1} + \eps }(s) \Big\|_{L^2(t^{2K + |M| -1} dt)}.
\end{align*}
The right-hand side here coincides with the right-hand side of \eqref{est444}, and \eqref{est333} follows.

We now focus on the second smoothness condition \eqref{sm2}. We prove it with $\gamma=1$.
Using the Mean Value Theorem, and then sequently
Lemma~\ref{lem:EST}, Lemma~\ref{lem:qz},
Lemma~\ref{lem:theta} twice (with $z=\theta$ and $z= y \vee y'$) and \eqref{est111}
we obtain
\begin{align*}
& \left|
\partial_{t}^K \delta_{\eta,M,\mathbf{x}} \G_{t}^{\lambda,\eta,+}(\mathbf{x},y)\Big|_{\mathbf{x} = x+z}
-  \partial_{t}^K \delta_{\eta,M,\mathbf{x}} \G_{t}^{\lambda,\eta,+}(\mathbf{x},y')\Big|_{\mathbf{x} = x+z}
\right|
\sqrt{\Xi_{\lambda}(x,z,t)} \chi_{ \{ x+z \in \R \} }
\\
& \quad \lesssim
|y - y'|
\sum_{\substack{ \eps, \zeta, \rho \in \{0,1\}^n \\
\alpha, \beta  \in \{0,1,2\}^n  }}
\sum_{0 \le \tau \le 2\eps - \alpha\eps + \eta - \zeta\eta}
x^{2\eps - \alpha\eps + \eta - \zeta\eta - \tau}
(y \vee y')^{2\eps-\beta \eps + \eta - \rho \eta}
t^{-(n/2+|\lambda|+|\eta| +2|\eps|) -K} \\
& \qquad \times
t^{-|M|/2 -1/2 + (|\alpha \eps| + |\zeta \eta| + |\beta \eps| + |\rho \eta| + |\tau|)/2}
                \int \exp\left(-\frac{q(x,y \vee y',s)}{256t}\right)
d\Omega_{\lambda+\eta+ \mathbf{1} + \eps }(s) \\
& \qquad \times
\sqrt{\Xi_{\lambda}(x,z,t)}
\chi_{ \{ x+z \in \R \} },
\end{align*}
for $(z,t) \in A$ and $|x-y| > 2|y-y'|$.
An application of Lemma~\ref{lem:intXi},
Lemma~\ref{lem:EST2} (specified to $p=2$, $W=2K + |M|$, $C=1/256$ $u=1$)
and then Lemma~\ref{lem:double} leads directly to the desired estimate.

This finishes the whole reasoning justifying Proposition \ref{pro:kerest}.
\end{proof}

%%%%%%%%%%%%%%%%%%%%%%%%%%%%%%%%%%%%%%%%%%%%%%%%%%%%%%%%%%%%%%%%%%%%%%%%%%%%%%%%%%%%%%%%%

To prove Proposition~\ref{pro:kerestLAIP}, we need
Fa\`a di Bruno's formula for the $K$th derivative, $K \ge 1$, of the composition
of two functions (see \cite{Jo} for related references and interesting historical remarks),
\begin{equation} \label{Faa}
\partial_t^K(g\circ f)(t) = \sum \frac{K!}{k_1! \cdot \ldots \cdot k_K!} \;
    \big(\partial^{k_1+\ldots+k_K} g\big)
    \circ f(t) \bigg( \frac{\partial_t^1 f(t)}{1!}\bigg)^{k_1}\cdot \ldots \cdot
    \bigg( \frac{\partial_t^K f(t)}{K!}\bigg)^{k_K},
\end{equation}
where the summation runs over all $k_1,\ldots,k_K \ge 0$ such that $k_1+2k_2+\ldots+K k_K = K$.

%%%%%%%%%%%%%%%%%%%%%%%%%%%%%%%%%%%%%%%%%%%%%%%%%%%%%%%%%%%%%%%%%%%%%%%%%%%%%%%%%%%%%%%%%
\begin{proof}[Proof of Proposition \ref{pro:kerestLAIP}]
The reasoning is based on the subordination formula \eqref{subprinc} and a careful repetition of the arguments from the proof of Proposition \ref{pro:kerest} (the case of $\mathcal{S}^{\lambda,\eta,+}_{K,M}(x,y)$).
We give the details only in the case of the growth condition \eqref{gr}, leaving the proof of the smoothness bounds to the reader.

By \eqref{subprinc} and Fa\`a di Bruno's formula \eqref{Faa} applied with 
$g(r) = \delta_{\eta,M,\mathbf{x}} \G_{r}^{\lambda,\eta,+}(\mathbf{x},y)\Big|_{\mathbf{x} = x+z}$
and 
$f(t) = t^2/(4u)$, we get
\begin{align*}
& \partial_{t}^K \delta_{\eta,M,\mathbf{x}} \mathbb{P}_{t}^{\lambda,\eta,+}(\mathbf{x},y)\Big|_{\mathbf{x} = x+z} \\
& \qquad =
\sum_{k_1 + 2k_2 = K} c_{k_1,k_2} \int_0^\infty
\partial_r^{k_1 + k_2} \delta_{\eta,M,\mathbf{x}} \G_{r}^{\lambda,\eta,+}(\mathbf{x},y)
\Big|_{\mathbf{x} = x+z }^{r=t^2/(4u)}
t^{k_1} u^{-k_1 - k_2} \frac{e^{-u} \, du}{\sqrt{u}},
\end{align*}
where $c_{k_1,k_2}$ are constants; observe that this formula works also for $K=0$.
Using Minkowski's integral inequality we may reduce our task to showing that
\begin{align*}
& \int_0^\infty \left\|
\partial_r^{k_1 + k_2} \delta_{\eta,M,\mathbf{x}} \G_{r}^{\lambda,\eta,+}(\mathbf{x},y)
\Big|_{\mathbf{x} = x+z }^{r=t^2/(4u)}
t^{k_1}  \sqrt{\Xi_{\lambda}(x,z,t^2)}
\,\chi_{\{x+z\in\R\}}
\right\|_{ L^2(\Gamma,t^{2K + 2|M| - 1}dzdt) } \\
& \qquad \times u^{-k_1 - k_2}
\frac{e^{-u} \, du}{\sqrt{u}}
\lesssim
 \frac{1}{w^+_{\lambda}(B(x,|x-y|))}, \qquad x \ne y,
\end{align*}
where $k_1, k_2 \in \N$ are fixed and such that $k_1 + 2k_2 = K$.
After the change of variable $t^2 \mapsto t$, we see that the left-hand side above is, up to a constant factor, equal to
\begin{align*}
I &\equiv
\int_0^\infty \left\|
\partial_r^{k_1 + k_2} \delta_{\eta,M,\mathbf{x}} \G_{r}^{\lambda,\eta,+}(\mathbf{x},y)
\Big|_{\mathbf{x} = x+z }^{r=t/(4u)}
  \sqrt{\Xi_{\lambda}(x,z,t)}
\,\chi_{\{x+z\in\R\}}
\right\|_{ L^2(A,t^{k_1 + K + |M| - 1}dzdt) } \\
& \qquad \qquad \times
u^{-k_1 - k_2} \frac{e^{-u} \, du}{\sqrt{u}}.
\end{align*}
Proceeding in a similar way as at the beginning of the proof of Proposition~\ref{pro:kerest} (the case of $\mathcal{S}^{\lambda,\eta,+}_{K,M}(x,y)$), namely using Lemma~\ref{lem:EST}, Lemma~\ref{lem:qz} and then \eqref{est111}, we obtain
\begin{align*}
& \left| \partial_{r}^{k_1 + k_2} \delta_{\eta,M,\mathbf{x}} \G_{r}^{\lambda,\eta,+}(\mathbf{x},y)\Big|_{\mathbf{x} = x+z}^{r=t/(4u)}
\right| \\
& \quad \lesssim
\sum_{\substack{ \eps, \zeta, \rho \in \{0,1\}^n \\
\alpha, \beta  \in \{0,1,2\}^n  }}
\sum_{0 \le \tau \le 2\eps - \alpha\eps + \eta - \zeta\eta}
x^{2\eps - \alpha\eps + \eta - \zeta\eta - \tau}
y^{2\eps-\beta \eps + \eta - \rho \eta} (t/u)^{-(n/2+|\lambda|+|\eta| +2|\eps|)
- k_1 - k_2 - |M|/2 } \\
& \quad \qquad \times
  (t/u)^{(|\alpha \eps| + |\zeta \eta| + |\beta \eps| + |\rho \eta| )/2} t^{|\tau|/2} e^{u/2}
                \int \exp\left(-\frac{\q}{4t} u \right)
d\Omega_{\lambda+\eta+ \mathbf{1} + \eps }(s), \qquad
(z,t) \in A.
\end{align*}
Since the expression in the right-hand side is independent of $z$, an application of Lemma~\ref{lem:intXi} and then the change of variable $t/u \mapsto t$ gives
\begin{align*}
I &\lesssim
\sum_{\substack{ \eps, \zeta, \rho \in \{0,1\}^n \\
\alpha, \beta  \in \{0,1,2\}^n  }}
\sum_{0 \le \tau \le 2\eps - \alpha\eps + \eta - \zeta\eta}
\int_0^\infty \Big\|
x^{2\eps - \alpha\eps + \eta - \zeta\eta - \tau}
y^{2\eps-\beta \eps + \eta - \rho \eta}
t^{-(n/2+|\lambda|+|\eta| +2|\eps|)
- k_1 - k_2 - |M|/2} \\
&  \quad \times
t^{ (|\alpha \eps| + |\zeta \eta| + |\beta \eps| + |\rho \eta| + |\tau| )/2}
                \int \exp\left(-\frac{\q}{4t}  \right)
d\Omega_{\lambda+\eta+ \mathbf{1} + \eps }(s)
\Big\|_{L^2(t^{k_1 + K + |M| - 1}dt) }
u^{|M|/2 + |\tau|/2} \frac{e^{-u/2} \, du}{\sqrt{u}}.
\end{align*}
This together with Lemma~\ref{lem:EST2} (taken with $p=2$, $W=2k_1 + 2k_2 + |M|$, $C = 1/4$, $u=0$) leads directly to the required bound.
\end{proof}

%%%%%%%%%%%%%%%%%%%%%%%%%%%%%%%%%%%%%%%%%%%%%%%%%%%%%%%%%%%%%%%%%%%%%%%%%%%%%%%%%%%%%%%%%

\end{document}